\documentclass{amsart}

\usepackage{amsmath}             
\usepackage{amsfonts}          
\usepackage{amsrefs}

\newtheorem{thm}{Theorem}[section]
\newtheorem{lem}[thm]{Lemma}
\newtheorem{prop}[thm]{Proposition}
\newtheorem{cor}[thm]{Corollary}

\theoremstyle{definition}

\newtheorem{nota}[thm]{Notation}
\newtheorem{rem}[thm]{Remarks}
\newtheorem{exam}[thm]{Example}
\newtheorem{cons}[thm]{Construction}

\begin{document}
\date{\today}
\nocite{*}

\title{Normal Limits of Nilpotent Operators in C$^*$-Algebras}

\author{Paul Skoufranis}
\address{Department of Mathematics, UCLA, Los Angeles, California, USA, 90095-1555}
\email{pskoufra@math.ucla.edu}
\thanks{This research was supported in part by NSERC PGS D3-389187-200.}
\subjclass[2010]{46L05}

\keywords{C$^*$-algebra, nilpotent operators, quasinilpotent operators, normal operator, norm-limit, purely infinite C$^*$-algebra.}

\begin{abstract}
We will investigate the intersection of the normal operators with the closure of the nilpotent operators in various C$^*$-algebras.  A complete description of the intersection will be given for unital, simple, purely infinite C$^*$-algebras.  The intersection in AF C$^*$-algebras is also of interest.  In addition, an example of a separable, nuclear, quasidiagonal C$^*$-algebra where every operator is a limit of nilpotent operators will be constructed.
\end{abstract}

\maketitle

\section{Introduction}

As an $n \times n$ matrix has trivial spectrum if and only if it is nilpotent and the set of nilpotent $n \times n$ matrices is closed in the operator topology, in Problem 7 of \cite{Hal} Halmos posed the question, ``Is every quasinilpotent operator (that is, an operator $T$ with spectrum $\{0\}$) on a complex, separable, infinite dimensional Hilbert space the norm limit of nilpotent operators (that is, operators $T$ such that $T^k = 0$ for some $k \in \mathbb{N}$; note every nilpotent operator is automatically quasinilpotent)?"  An affirmative answer to Halmos problem was given in \cite{AV} (also see \cite{AS} and \cite{AFP}).  However, Halmos realized that his problem was incorrect as \cite{Hed} showed that there exists non-quasinilpotent operators that are norm limits of nilpotent operators.  Thus Halmos reposed his question as, ``What is the closure of the nilpotent operators on a complex, separable, infinite dimensional Hilbert space?'' or equivalently, ``What is the closure of all operators on a complex, separable, infinite dimensional Hilbert space with trivial spectrum?"  A complete characterization of the closure of the nilpotent operators was first given in \cite{AFV}:
\begin{thm}[\cite{AFV}, see Theorem 5.1 in \cite{He1} for a proof]
\label{BHchar}
Let $\mathcal{H}$ be a complex, separable, infinite dimensional Hilbert space and let $T$ be a bounded linear operator on $\mathcal{H}$.  Then $T$ is a norm limit of nilpotent operators on $\mathcal{H}$ if and only if the following conditions are satisfied:
\begin{enumerate}
	\item The spectrum of $T$ is connected and contains zero.
	\item The essential spectrum of $T$ is connected and contains zero.
	\item The Fredholm index of $\lambda I_\mathcal{H} - T$ is zero for all $\lambda \in \mathbb{C}$ such that $\lambda I_\mathcal{H} - T$ is semi-Fredholm.
\end{enumerate}
\end{thm}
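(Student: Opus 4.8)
The plan is to prove the two implications separately: the ``necessity'' direction by spectral‑perturbation bookkeeping, and the much deeper ``sufficiency'' direction by the single‑operator approximation machinery of \cite{He1}.

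For necessity, write $T=\lim_k N_k$ in norm with each $N_k$ nilpotent, say $N_k^{m_k}=0$; we may assume every $N_k\neq 0$ (otherwise $T=0$ and the three conditions are checked by hand). Since the invertible operators form an open set and a nonzero nilpotent is never invertible, $0\in\sigma(T)$; likewise $\sigma_e(N_k)=\sigma(\pi(N_k))=\{0\}$ for the quotient map $\pi$ onto the Calkin algebra, so upper semicontinuity of the spectrum there forces $0\in\sigma_e(T)$. For connectedness of $\sigma(T)$: if $\sigma(T)=F_0\sqcup F_1$ with $F_0,F_1$ nonempty, disjoint and closed and $0\in F_0$, choose a bounded open $U\supseteq F_1$ with $\overline{U}\cap F_0=\emptyset$ and $0\notin\overline{U}$; for $k$ large $\partial U$ misses $\sigma(N_k)=\{0\}$, so the Riesz idempotent $E_k=(2\pi i)^{-1}\int_{\partial U}(z-N_k)^{-1}\,dz$ vanishes, yet $E_k\to(2\pi i)^{-1}\int_{\partial U}(z-T)^{-1}\,dz$, the Riesz idempotent of $T$ for $F_1$, which is nonzero --- a contradiction; the same argument inside the Calkin algebra gives connectedness of $\sigma_e(T)$. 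For the index condition: if $\lambda\neq 0$ then $\lambda-N_k$ is invertible, hence Fredholm of index $0$, and as the index is locally constant on the open set of semi‑Fredholm operators, $\lambda-T$, being semi‑Fredholm, has index $0$; and $\lambda=0$ never occurs, because a nonzero nilpotent on an infinite‑dimensional space is never semi‑Fredholm --- the injections $\ker N_k^{\,j+1}/\ker N_k^{\,j}\hookrightarrow\ker N_k^{\,j}/\ker N_k^{\,j-1}$ would force $\dim\ker N_k^{\,m_k}=\dim\mathcal{H}$ to be finite --- so by openness of the semi‑Fredholm set $0-T=-T$ fails to be semi‑Fredholm as well.

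For sufficiency one must show any $T$ satisfying (1)--(3) is a norm limit of nilpotents, and here I would follow the structure theory of \cite{He1}. The key observation is that conditions (1)--(3) are exactly the removal of the obstructions that would otherwise block approximation by nilpotents: by the Apostol--Foia\c s--Voiculescu description of quasitriangularity, (3) says that $T$ and $T^{*}$ are both quasitriangular, which together with (1) and (2) is what permits normalizing $T$, up to an arbitrarily small norm perturbation, to a concrete banded/``staircase'' form --- using Voiculescu's noncommutative Weyl--von Neumann theorem, the Apostol triangular representation, and the Apostol--Morrel ``simple models'' in which the essential spectrum $\sigma_e(T)$ is realized by an explicit normal summand with no surviving shift (index) contribution. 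One is thereby reduced to approximating operators of structured (block‑diagonal, or more generally nicely banded) form whose spectrum and essential spectrum are connected and contain $0$; for these the nilpotent approximants are assembled from the standard pieces --- truncated (weighted) unilateral shifts, i.e.\ finite Jordan‑type blocks, and finite direct sums of them --- whose spectra are disks shrinking to $\{0\}$, with connectedness of $\sigma(T)$ and $\sigma_e(T)$ used to string these blocks along a finite net joining $0$ to the rest of the spectrum, and the off‑diagonal corrections needed to match $T$ within $\varepsilon$ produced by solving the relevant operator equations (e.g.\ via Rosenblum's theorem, the diagonal blocks having separated spectra), so that the result is strictly block upper triangular for a \emph{finite} decomposition and hence genuinely nilpotent; the already‑settled case \cite{AV} that every quasinilpotent operator is a limit of nilpotents is invoked to absorb the residual ``small'' part.

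The main obstacle is squarely the sufficiency direction, and within it the quantitative construction: producing, for each $\varepsilon>0$, a \emph{genuine} nilpotent (necessarily of finite nilpotency index $m$ growing at least like $1/\varepsilon$, since any nilpotent $N$ of index $m$ satisfies $\|N-T\|\ge\mathrm{dist}(0,\sigma(T))/(m+1)$) lying within $\varepsilon$ of $T$, by simultaneously matching the global spectrum and essential spectrum of $T$, respecting the fixed finite block structure that makes $N$ nilpotent, and controlling all off‑diagonal corrections so the total perturbation stays below $\varepsilon$. Making these three requirements coexist is exactly where Voiculescu's theorem, the triangular representation, and careful norm estimates all have to be orchestrated; by contrast, the necessity direction is little more than the continuity of spectra, essential spectra, and Fredholm index under norm perturbation.
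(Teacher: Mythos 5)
The paper does not prove this theorem: it is quoted from \cite{AFV} with a pointer to Theorem 5.1 of \cite{He1}, so there is no internal argument to compare yours against. Judged on its own terms, your necessity direction is correct and essentially complete: the openness of the invertibles in $\mathcal{B}(\mathcal{H})$ and in the Calkin algebra gives $0\in\sigma(T)\cap\sigma_e(T)$, the Riesz-idempotent argument (run in both algebras) gives connectedness, the local constancy of the index on the semi-Fredholm class gives index zero for $\lambda\neq 0$, and your observation that a nilpotent on an infinite-dimensional space has infinite-dimensional kernel correctly disposes of $\lambda=0$ --- though to conclude ``not semi-Fredholm'' you should also note that the same filtration argument applied to $N_k^{*}$ shows the range has infinite codimension, ruling out the lower semi-Fredholm case as well, not just the upper one. (These are exactly the arguments the paper itself uses elsewhere, in Lemma \ref{cac0}, Lemma \ref{distrestr}, and Lemma \ref{concomofid}.) One parenthetical slip: since $0\in\sigma(T)$ under hypothesis (1), your lower bound $\|N-T\|\ge\mathrm{dist}(0,\sigma(T))/(m+1)$ is vacuously $\ge 0$; you presumably meant a bound in terms of the spectral radius or of $\max\{|\lambda|:\lambda\in\sigma(T)\}$.

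The genuine gap is the sufficiency direction, which you acknowledge is the entire content of the theorem. What you have written there is an accurate survey of the architecture of the Apostol--Foia\c{s}--Voiculescu/Herrero proof (quasitriangularity of $T$ and $T^{*}$ from (3), Voiculescu's theorem, the Apostol triangular representation and Apostol--Morrel simple models, Rosenblum's theorem for the off-diagonal corrections, and \cite{AV} for the quasinilpotent residue), but none of these steps is executed: you do not construct the block decomposition, verify that the models satisfying (1)--(3) really reduce to nilpotent-approximable pieces, or carry out the norm bookkeeping that you yourself identify as the crux. As a citation-level reduction to the literature this matches the paper's treatment; as a proof it is incomplete, and the missing content is precisely the hard half of the theorem.
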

Herrero performed a significant amount of work in an attempt determine the closure of the nilpotent operators on a complex, separable, infinite dimensional Hilbert space (see \cite{He2}, \cite{He4}, and \cite{He3}).  In particular, before \cite{AFV}, Herrero proved the following specific case of Theorem \ref{BHchar} in an elegant way:
\begin{thm}[Theorem 7 of \cite{He2}, also see Proposition 5.6 in \cite{He1} and Theorem 2 in \cite{Had}]
\label{BHnormchar}
Let $N$ be a normal operator on a complex, separable, infinite dimensional Hilbert space $\mathcal{H}$.  Then the following are equivalent:
\begin{enumerate}
	\item $N$ is a norm limit of nilpotent operators on $\mathcal{H}$.
	\item $N$ is a norm limit of quasinilpotent operators on $\mathcal{H}$.
	\item The spectrum of $N$ is connected and contains zero.
\end{enumerate}
\end{thm}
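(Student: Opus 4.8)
The plan is to deduce the theorem from Theorem~\ref{BHchar}, which is available to us and makes the normal case almost automatic. The implication $(1)\Rightarrow(2)$ needs no argument, since every nilpotent operator is quasinilpotent. For $(2)\Rightarrow(3)$ I would argue directly with the Riesz functional calculus: write $N=\lim_k N_k$ with each $N_k$ quasinilpotent; since no quasinilpotent operator is invertible (its spectrum is $\{0\}$) and the invertibles form an open set, $N$ is not invertible, so $0\in\sigma(N)$. If $\sigma(N)$ split as a disjoint union of nonempty closed sets $K_0\ni 0$ and $K_1$, I would take a contour $\Gamma$ (missing $0$) enclosing $K_1$ with $0$ in its unbounded complementary component, and observe that the Riesz idempotents $\frac{1}{2\pi i}\int_\Gamma(\zeta I_{\mathcal{H}}-N_k)^{-1}\,d\zeta$ all vanish, because the resolvent of $N_k$ is analytic off $\{0\}$, while they converge in norm (for $k$ large, by the resolvent identity) to the spectral projection of $N$ onto $K_1\neq\emptyset$, which is nonzero — a contradiction. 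Hence $\sigma(N)$ is connected and contains $0$.

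For the substantive implication $(3)\Rightarrow(1)$, let $N$ be normal with $\sigma(N)$ connected and $0\in\sigma(N)$. If $\sigma(N)=\{0\}$ then $N=0$ is nilpotent, so I may assume $\sigma(N)$ has at least two points; then $\sigma(N)$, being a compact connected set with more than one point, has no isolated points. Two observations finish the job. First, for a normal operator the essential spectrum is the spectrum with the isolated eigenvalues of finite multiplicity removed, so here $\sigma_{\mathrm{ess}}(N)=\sigma(N)$; thus conditions~(1) and~(2) of Theorem~\ref{BHchar} both hold. Second, condition~(3) is automatic for a normal operator: if $\lambda I_{\mathcal{H}}-N$ is semi-Fredholm then it has closed range, and by normality $\ker(\lambda I_{\mathcal{H}}-N)=\ker(\lambda I_{\mathcal{H}}-N)^*=\operatorname{ran}(\lambda I_{\mathcal{H}}-N)^\perp$, so $\dim\ker(\lambda I_{\mathcal{H}}-N)=\dim\operatorname{coker}(\lambda I_{\mathcal{H}}-N)$; hence both are finite and the index is $0$. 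Theorem~\ref{BHchar} then yields $(3)\Rightarrow(1)$, and together with $(1)\Rightarrow(2)\Rightarrow(3)$ the three conditions are equivalent.

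This route rests only on two routine facts — the shape of the essential spectrum of a normal operator and the norm-continuity of Riesz idempotents — so along it I do not expect a genuine obstacle. If one instead wanted a self-contained argument in the spirit of Herrero's (which predates Theorem~\ref{BHchar}), the skeleton would be: approximate $N$ in norm, via the spectral theorem, by a normal $N'$ whose spectrum is a finite $\varepsilon$-net $\{0=\lambda_0,\dots,\lambda_n\}\subseteq\sigma(N)$ with every $\lambda_j$ of infinite multiplicity (possible precisely because $\sigma(N)$ has no isolated points, so each spectral projection $E_N(B(\lambda_j,\varepsilon))$ has infinite rank); use connectedness of $\sigma(N)$ to list the $\lambda_j$, with repetitions, as a chain $0=z_0,z_1,\dots,z_L$ with $|z_s-z_{s-1}|<2\varepsilon$; identify $N'$ up to unitary equivalence with $A\otimes I_{\ell^2}$ for $A=\operatorname{diag}(z_0,\dots,z_L)$; and then exhibit a nilpotent operator within $O(\varepsilon)$ of $A\otimes I_{\ell^2}$, so that $\operatorname{dist}(N,\text{nilpotents})=O(\varepsilon)\to 0$. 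That last construction is where the real difficulty sits: it must use the infinite multiplicity in an essential way, because a nilpotent matrix has vanishing traces of all its powers while $\operatorname{diag}(z_0,\dots,z_L)$ need not, so no finite-dimensional model can succeed.
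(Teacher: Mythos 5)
Your proposal is correct, but note that the paper offers no proof of Theorem~\ref{BHnormchar}: it is quoted from \cite{He2} (see also \cite{He1} and \cite{Had}), with only the remark that the argument for Theorem~\ref{uspimain} can be adapted to reprove it. Your primary route---verifying the three hypotheses of Theorem~\ref{BHchar} for a normal operator with connected spectrum containing $0$ (the essential spectrum of a normal operator is the spectrum with isolated finite-multiplicity eigenvalues deleted, hence equals $\sigma(N)$ here; and $\ker(\lambda I_{\mathcal{H}}-N)=\ker(\lambda I_{\mathcal{H}}-N)^{*}=\operatorname{ran}(\lambda I_{\mathcal{H}}-N)^{\perp}$ forces index zero whenever $\lambda I_{\mathcal{H}}-N$ is semi-Fredholm)---is a valid and efficient deduction, and your contour-integral argument for $(2)\Rightarrow(3)$ is exactly the content of Lemma~\ref{cac0}. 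What this route buys is brevity; what it costs is that it leans on the full Apostol--Foia\c{s}--Voiculescu characterization, a substantially deeper theorem that historically came \emph{after} (and in part built on) the normal special case, so as a self-contained proof it inverts the logical economy the paper is interested in. The skeleton you sketch at the end is essentially the paper's intended adaptation of the proof of Theorem~\ref{uspimain}: approximate $N$ by a finite-spectrum normal operator whose eigenvalues form an $\varepsilon$-net chained to $0$, each of infinite multiplicity, realize the resulting operator inside matrix algebras built from equivalent orthogonal projections, and invoke Lemma~\ref{mas} to land within $O(\varepsilon)$ of a nilpotent; your closing observation that the trace obstruction forces any such model to exploit infinite multiplicity is precisely the point of Example~\ref{2nnil}.
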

Due to the existence and elegance of multiple proofs of Theorem \ref{BHnormchar}, it is natural rephrase the above question in the context of C$^*$-algebras; that is, ``Given an arbitrary C$^*$-algebra $\mathfrak{A}$ and a normal operator $N \in \mathfrak{A}$, can simple conditions be given to determine whether $N$ is a norm limit of nilpotent or quasinilpotent operators from $\mathfrak{A}$?"  Although the GNS construction implies $\mathfrak{A}$ can be embedded faithfully into the bounded linear operators on a Hilbert space, Theorem \ref{BHnormchar} does not provided the answer to this question as the image of $\mathfrak{A}$ need not contain the necessary nilpotent or quasinilpotent operators.  However, a solution to this question can be easily obtained in several particular cases.  For example, this question is easily solved for abelian C$^*$-algebras (which have no non-trivial quasinilpotent operators).  In addition, this problem has already been solved for the Calkin algebra:
\begin{thm}[see Theorem 5.34 in \cite{He1}]
\label{Calkin}
Let $\mathcal{B}(\mathcal{H})$ be the bounded linear operators on a complex, separable Hilbert space $\mathcal{H}$, let $\mathfrak{A}$ be the Calkin algebra, let $q : \mathcal{B}(\mathcal{H}) \to \mathfrak{A}$ be the canonical quotient map, and let $T \in \mathcal{B}(\mathcal{H})$.  Then $q(T)$ is a norm limit of nilpotent operators from $\mathfrak{A}$ if and only if the essential spectrum of $T$ is connected and contains zero and the Fredholm index of $\lambda I_\mathcal{H} - T$ is zero for all $\lambda$ such that $\lambda I_\mathcal{H} - T$ is semi-Fredholm.
\end{thm}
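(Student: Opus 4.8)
The plan is to bootstrap from the Hilbert space classification of Theorem~\ref{BHchar} by means of compact perturbations; we may assume $\mathcal{H}$ is infinite dimensional, the finite dimensional case being trivial since then $\mathfrak{A} = 0$. First I would set up the standard dictionary between $\mathfrak{A} = \mathcal{B}(\mathcal{H})/\mathcal{K}(\mathcal{H})$ and Fredholm theory: $\sigma(q(T))$ is the essential spectrum of $T$; $\lambda I_\mathcal{H} - T$ is semi-Fredholm precisely when $\lambda I_\mathcal{H} - q(T)$ is one-sided invertible in $\mathfrak{A}$; and compact perturbations leave unchanged the essential spectrum, the semi-Fredholm domain, and the Fredholm index, the last also being locally constant on the invertible group $GL(\mathfrak{A})$. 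Writing $\Sigma$ for the essential spectrum of $T$, the assertion to be proved is that $q(T)$ is a norm limit of nilpotents of $\mathfrak{A}$ if and only if $\Sigma$ is connected and contains $0$ and $\mathrm{ind}(\lambda I_\mathcal{H}-T)=0$ whenever $\lambda I_\mathcal{H}-T$ is semi-Fredholm --- equivalently, if and only if $T$ satisfies conditions~(2) and~(3) of Theorem~\ref{BHchar}. I would prove the two implications separately.

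For necessity, suppose $q(T) = \lim_k q(N_k)$ with each $q(N_k)$ nilpotent. A norm limit of nilpotents in any unital Banach algebra has connected spectrum containing $0$: one gets $0 \in \sigma(q(T))$ because $GL(\mathfrak{A})$ is open, and connectedness from the Riesz idempotent argument --- a partition $\sigma(q(T)) = \sigma_1 \sqcup \sigma_2$ into nonempty relatively clopen pieces would yield a nontrivial idempotent $f(q(T))$, while upper semicontinuity of the spectrum would eventually trap each $\sigma(q(N_k)) = \{0\}$ inside one piece, forcing $f(q(N_k)) \in \{0,I\}$ and, in the limit, a contradiction. Thus $\Sigma$ is connected and contains $0$, which is condition~(2). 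For condition~(3), lift each $q(N_k)$ to $N_k$ with $N_k^{m_k}$ compact; then $\sigma(q(N_k)) = \{0\}$ shows $\lambda I_\mathcal{H} - N_k$ is Fredholm for all $\lambda \neq 0$, and since $\mathbb{C}\setminus\{0\}$ is connected with index $0$ for $|\lambda|$ large, $\mathrm{ind}(\lambda I_\mathcal{H} - N_k) = 0$ for every $\lambda \neq 0$. If $\lambda_0 I_\mathcal{H} - T$ is Fredholm then $\lambda_0 I_\mathcal{H} - q(T) \in GL(\mathfrak{A})$ (and $\lambda_0 \neq 0$, since $0 \in \Sigma$), so $\lambda_0 I_\mathcal{H} - q(N_k)$ lies in $GL(\mathfrak{A})$ in the same component for large $k$, whence $\mathrm{ind}(\lambda_0 I_\mathcal{H} - T) = \mathrm{ind}(\lambda_0 I_\mathcal{H} - N_k) = 0$. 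It remains to exclude $\lambda_0 I_\mathcal{H} - T$ being semi-Fredholm but not Fredholm, i.e.\ $\lambda_0 I_\mathcal{H} - q(T)$ being one-sided invertible but not invertible. If $\lambda_0 = 0$ this is impossible because $-q(N_k)$, being nilpotent, is not one-sided invertible while the one-sided invertibles form an open set and $-q(T) = \lim_k(-q(N_k))$. If $\lambda_0 \neq 0$ then $\lambda_0 I_\mathcal{H} - q(N_k)$ is genuinely invertible, so $\lambda_0 I_\mathcal{H} - q(T)$ would lie in $\overline{GL(\mathfrak{A})}$; but a one-sided invertible, non-invertible element $v$ of a unital C$^*$-algebra is bounded away from $GL(\mathfrak{A})$, since (after passing to $v^*$ if necessary) $v(v^*v)^{-1}v^*$ is a projection $\neq I$ depending norm-continuously on $v$ and equal to $I$ at every invertible.

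For sufficiency, assume $\Sigma$ is connected and contains $0$ and $\mathrm{ind}(\lambda I_\mathcal{H} - T) = 0$ on the semi-Fredholm domain; as in the previous paragraph this also forbids semi-Fredholm non-Fredholm points, so $\lambda I_\mathcal{H} - T$ is Fredholm of index $0$ for every $\lambda \notin \Sigma$. I would then invoke the classical hole-filling perturbation theorem: an operator whose index vanishes off its essential spectrum has a compact perturbation with spectrum equal to the polynomially convex hull $\widehat{\Sigma}$. Choose such a $K$. Since $\Sigma$ is connected and contains $0$, so does $\widehat{\Sigma} = \sigma(T + K)$; and compact invariance gives $\sigma(q(T+K)) = \Sigma$, while $\lambda I_\mathcal{H} - (T+K)$ is semi-Fredholm exactly for $\lambda \notin \Sigma$, with index $0$ there. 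Hence $T + K$ satisfies conditions~(1)--(3) of Theorem~\ref{BHchar} and is a norm limit of nilpotent operators $\widetilde{N}_j \in \mathcal{B}(\mathcal{H})$, so $q(T) = q(T+K) = \lim_j q(\widetilde{N}_j)$ is a norm limit of nilpotents of $\mathfrak{A}$.

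The hard part will be the hole-filling input in the sufficiency direction: manufacturing a single compact perturbation of $T$ with prescribed spectrum while keeping the Fredholm index function under control is the one genuinely substantial ingredient beyond Theorem~\ref{BHchar}. The secondary delicate point is the ``semi-Fredholm implies Fredholm'' step in the necessity proof, which amounts to understanding where $q(T)$ can sit relative to the closure of $GL(\mathfrak{A})$; the rest of the argument is Fredholm-theoretic bookkeeping.
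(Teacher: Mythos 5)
Your proof is correct, but note first that the paper does not actually prove Theorem~\ref{Calkin}: it is quoted from \cite{He1} as background, and the paper's own route to it is the later Theorem~\ref{uspimain} (the Calkin algebra being unital, simple and purely infinite, with $\mathfrak{A}^{-1}/\mathfrak{A}^{-1}_0 \cong \mathbb{Z}$ via the Fredholm index, so that condition (3) there translates exactly into the stated Fredholm conditions). Your argument is therefore genuinely different from the one the paper supplies: you reduce to the Hilbert-space theorem (Theorem~\ref{BHchar}) by a compact perturbation, whereas the paper works intrinsically inside the C$^*$-algebra using real rank zero, Lin's finite-spectrum approximation, and the comparison theory of projections in purely infinite algebras; your route is shorter but leans on the full strength of Theorem~\ref{BHchar}, while the paper's applies verbatim to every unital, simple, purely infinite C$^*$-algebra. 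Both halves of your argument are sound. In the necessity direction, your device for ruling out semi-Fredholm points of infinite index --- that a one-sided invertible, non-invertible element is bounded away from the invertibles, via the continuity of $v \mapsto v(v^*v)^{-1}v^*$ --- is exactly right. In the sufficiency direction you correctly isolate the one substantial external input: the theorem (due to Stampfli, and available in \cite{He1}) that the Weyl spectrum is attained, i.e.\ there is a single compact $K$ with $\sigma(T+K) = \sigma_W(T)$. Your statement of it is slightly off: under your hypotheses $\sigma_W(T) = \sigma_e(T) = \Sigma$, and the perturbed spectrum one obtains is $\Sigma$ itself rather than its polynomially convex hull $\widehat{\Sigma}$. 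This is harmless, since $\Sigma$ is already assumed connected and to contain $0$, which is all that conditions (1)--(3) of Theorem~\ref{BHchar} require of $T+K$; with that citation corrected the argument is complete.
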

More recently in \cite{Sk}, a complete classification of which normal operators were norm limits of nilpotent and quasinilpotent operators was obtained for type I and type III von Neumann algebras with separable predual (with some partial results for type II von Neumann algebras):  
\begin{thm}[Corollary 2.6 in \cite{Sk}]
\label{typeI}
Let 
\[
\mathfrak{M} := L_\infty(X, \mathcal{B}(\mathcal{H})) \oplus \left(\prod_{n\geq 1} \mathcal{M}_n(\mathbb{C}) \overline{\otimes} L_\infty(X_n)\right)
\]
where $(X, \mu)$ and $(X_n, \mu_n)$ are Radon measure spaces and $\mathcal{B}(\mathcal{H})$ is the set of bounded linear operators on a complex, separable, infinite Hilbert space $\mathcal{H}$.  Let $P \in \mathfrak{M}$ be the (central) projection onto $L_\infty(X, \mathcal{B}(\mathcal{H}))$ and let $N \in \mathfrak{M}$ be a normal operator.  Then the following are equivalent:
\begin{enumerate}
	\item $N$ is a norm limit of nilpotent operators from $\mathfrak{M}$.
	\item $N$ is a norm limit of quasinilpotent operators from $\mathfrak{M}$.
	\item $PN = N$ and the spectrum of $N(x)$ is connected and contains zero for almost every $x \in X$.
\end{enumerate}
\end{thm}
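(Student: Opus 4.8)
The implication (1) $\Rightarrow$ (2) is trivial. For (2) $\Rightarrow$ (3) I would argue fibrewise. Since the spectrum of an element of a direct sum (resp.\ product) of unital C$^*$-algebras is the union (resp.\ closure of the union) of the spectra of its coordinates, if $N$ is a norm-limit of quasinilpotents $Q^{(k)}$ then every coordinate of every $Q^{(k)}$ is quasinilpotent; in particular the $\mathcal{M}_n(\mathbb{C})\overline{\otimes} L_\infty(X_n)$-coordinate of $Q^{(k)}$ is a.e.\ a nilpotent matrix, so — the nilpotent matrices being closed and a normal nilpotent matrix being $0$ — the corresponding coordinate of $N$ vanishes for every $n$, i.e.\ $PN=N$. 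On $L_\infty(X,\mathcal{B}(\mathcal{H}))$, using that $\sigma(T(x))\subseteq\sigma(T)$ for a.e.\ $x$ (and $\sigma(T(x))\neq\emptyset$), each $Q^{(k)}(x)$ is quasinilpotent on $\mathcal{H}$ and $N(x)=\lim_k Q^{(k)}(x)$ for a.e.\ $x$, so Theorem~\ref{BHnormchar} gives that $\sigma(N(x))$ is connected and contains $0$ for a.e.\ $x$.

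The substance is (3) $\Rightarrow$ (1). Since $PN=N$ we may assume $N\in L_\infty(X,\mathcal{B}(\mathcal{H}))$, $\|N\|\le 1$, and fix $\epsilon>0$; the measurable set $\{x:N(x)=0\}$ is harmless, so assume $N(x)\neq 0$ a.e., whence $\sigma(N(x))$ is a nondegenerate continuum and thus has no isolated points. Consequently, the spectral projection of $N(x)$ associated to a Borel set with nonempty interior that meets $\sigma(N(x))$ has infinite rank, and a nonzero finite-rank spectral projection of $N(x)$ must be supported on a finite set of finite-multiplicity eigenvalues. I would partition $\overline{\mathbb D}$ into Borel sets $\Delta_1,\dots,\Delta_M$ of diameter $<\epsilon$, each with nonempty interior (a translate of a fine square grid), choosing the translation — by a Fubini argument, since the finite-multiplicity eigenvalues of $N(x)$ form a countable set depending measurably on $x$ — so that for a.e.\ $x$ these eigenvalues avoid $\bigcup_i\partial\Delta_i$. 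With $\lambda_i\in\Delta_i$ and $E$ the spectral measure of $N$, the operator $N':=\sum_i\lambda_i E(\Delta_i)$ satisfies $\|N-N'\|\le\epsilon$ and, for a.e.\ $x$, every nonzero $E(\Delta_i)(x)$ has infinite rank.

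Now partition $X$ into the finitely many measurable sets $X_S=\{x:E(\Delta_i)(x)\neq 0\iff i\in S\}$. On $X_S$ the projections $\{E(\Delta_i)(\cdot):i\in S\}$ are pairwise orthogonal, pointwise of infinite rank, and sum to $1$; since any such family on $\mathcal{H}$ is unitarily equivalent to a fixed model and the implementing unitary can be chosen measurably in $x$, there is a unitary $u_S\in L_\infty(X_S,\mathcal{B}(\mathcal{H}))$ with $u_S N'|_{X_S}u_S^*=D_S\otimes 1$, where $D_S=\sum_{i\in S}\lambda_i F_i$ for fixed orthogonal infinite-rank projections $F_i$ on $\mathcal{H}$ summing to $1$. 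Some $\lambda_{i_0}$ ($i_0\in S$) lies within $\epsilon$ of $0$, and the graph on $S$ with edges $\{i,j:|\lambda_i-\lambda_j|<2\epsilon\}$ is connected because the cells $\Delta_i$ ($i\in S$) cover $\sigma(N(x))$ and each meets it. Fixing a spanning tree and replacing each $\lambda_i I_{F_i\mathcal{H}}$ by a normal operator whose spectrum is the union of the segments $[\lambda_i,\lambda_j]$ over the tree-edges at $i$ (plus the segment $[\lambda_{i_0},0]$ when $i=i_0$) produces a normal $M_S$ on $\mathcal{H}$ with $\|D_S-M_S\|\le 2\epsilon$ whose spectrum is connected and contains $0$; by Theorem~\ref{BHnormchar} it is a norm-limit of nilpotents, so pick a nilpotent $T_S$ with $\|M_S-T_S\|\le\epsilon$. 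The element of $L_\infty(X,\mathcal{B}(\mathcal{H}))$ equal to $u_S^*(T_S\otimes 1)u_S$ on each $X_S$ and to $0$ on $\{N=0\}$ is nilpotent (with index $\max_S(\text{index of }T_S)<\infty$, as there are finitely many $S$) and lies within $4\epsilon$ of $N$; letting $\epsilon\downarrow 0$ gives (1).

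The obstacle I expect is organizational rather than a single hard estimate: making all of the choices above measurable in $x$ while keeping the combinatorial data ($S$, the spanning tree, hence the nilpotency index) finite and independent of $x$. The two points at which the fibred structure of $L_\infty(X,\mathcal{B}(\mathcal{H}))$ must genuinely be used are the generic choice of grid — which is what lets the finitely many ``defect'' finite-rank spectral subspaces be absorbed by an $\epsilon$-perturbation — and the measurable unitary diagonalization onto a constant model operator, after which the whole problem collapses fibrewise to the scalar case already settled by Theorem~\ref{BHnormchar}.
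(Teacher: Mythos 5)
The paper does not actually prove Theorem~\ref{typeI}; it is quoted as background from Corollary~2.6 of \cite{Sk}, so there is no in-paper proof to compare against. That said, your argument is essentially correct and follows the same strategy the paper uses for its own results (e.g.\ Proposition~\ref{uspipos} and Theorem~\ref{uspimain}): discretize the spectrum, exploit an abundance of orthogonal equivalent infinite projections so that the connected spectrum can be traced out by a chain of cells, and then reduce to the Hilbert-space case settled by Theorem~\ref{BHnormchar}. Your directions (1)$\Rightarrow$(2)$\Rightarrow$(3) are fine, and in (3)$\Rightarrow$(1) the key points all check out: perfectness of $\sigma(N(x))$ forces $E(\mathrm{int}\,\Delta_i)(x)$ to be infinite rank whenever the interior meets the spectrum, so a nonzero finite-rank $E(\Delta_i)(x)$ can only arise from a finite-multiplicity eigenvalue sitting on $\partial\Delta_i$, which your generic translation of the grid excludes; the nerve of the covering of the connected set $\sigma(N(x))$ by the closed cells it meets is connected, so the $2\epsilon$-graph on $S$ is connected; and since there are only finitely many sets $X_S$, the assembled operator has finite nilpotency index. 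The one step you should not leave as a parenthetical is the measurability, jointly in $(x,\lambda)$, of the finite-multiplicity point spectrum of $N(x)$, which underlies the Fubini argument for the generic grid; this is a standard direct-integral fact (e.g.\ via $\mathrm{rank}\,E^{N(x)}(B(\lambda,1/n))$ and monotone limits), but it does require an argument, or can be sidestepped by instead absorbing any finite-rank defect projections into an adjacent infinite-rank cell at the cost of a further $\epsilon$. The measurable trivialization $u_S$ onto the constant model $\sum_{i\in S}\lambda_i F_i\otimes 1$ is likewise standard once all the projections are fibrewise infinite rank.
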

\begin{thm}[Theorem 3.2 in \cite{Sk}]
\label{typeIII}
Let $\mathfrak{M}$ be a type III von Neumann algebra with separable predual.  Then there exists a locally compact, complete, separable, metrizable measure space $(X, \mu)$ and a collection of type III factors $(\mathfrak{M}_x)_{x \in X}$ with separable predual such that $\mathfrak{M}$ is a direct integral of $(\mathfrak{M}_x)_{x \in X}$.  If $N \in \mathfrak{M}$ is a normal operator, we may write $N = \int^\oplus_X N_x \,d\mu(x)$ where $N_x \in \mathfrak{M}_x$ is a normal operator $\mu$-almost everywhere.  Then the following are equivalent:
\begin{enumerate}
	\item $N$ is a norm limit of nilpotent operators from $\mathfrak{M}$.
	\item $N$ is a norm limit of quasinilpotent operators from $\mathfrak{M}$.
	\item $N_x$ is a norm limit of nilpotent operators from $\mathfrak{M}_x$ for almost every $x \in X$.
	\item $N_x$ is a norm limit of quasinilpotent operators from $\mathfrak{M}_x$ for almost every $x \in X$.
	\item The spectrum of $N_x$ is connected and contains zero for almost every $x \in X$.
\end{enumerate}
\end{thm}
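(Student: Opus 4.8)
The plan is to split the statement into a \emph{local} analytic part, about a single type III factor $\mathfrak{M}_x$, and a \emph{global} measure-theoretic part, gluing fibrewise information along the direct integral. I would first dispatch the routine implications. Since a nilpotent operator is quasinilpotent and these properties pass to fibres, $(1)\Rightarrow(2)$, $(1)\Rightarrow(3)$ and $(3)\Rightarrow(4)$ are immediate (for $(1)\Rightarrow(3)$: if $N=\lim Q_k$ with $Q_k^{d_k}=0$, then $(Q_k)_x^{d_k}=0$ and $\|N_x-(Q_k)_x\|\le\|N-Q_k\|\to 0$ for almost every $x$). For $(2)\Rightarrow(5)$ I would use the standard description of the spectrum of a decomposable operator: if $T=\int^\oplus_X T_x\,d\mu(x)$ then $\lambda\in\sigma(T)$ precisely when $\mu(\{x:\operatorname{dist}(\lambda,\sigma(T_x))<\varepsilon\})>0$ for every $\varepsilon>0$, so $\sigma(T)=\{0\}$ forces $\sigma(T_x)=\{0\}$ for almost every $x$; combined with $\|N-T_k\|$ being the essential supremum of $\|N_x-T_{k,x}\|$, this puts us, for almost every $x$, in the situation that $N_x$ is a norm limit of quasinilpotents inside $\mathcal{B}(\mathcal{H}_x)$ (quasinilpotency being intrinsic by spectral permanence), so Theorem~\ref{BHnormchar} yields that $\sigma(N_x)$ is connected and contains $0$ almost everywhere. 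The same reference applied fibrewise inside $\mathcal{B}(\mathcal{H}_x)$ gives $(4)\Rightarrow(5)$. This leaves the two substantial implications: $(5)\Rightarrow(3)$, which is purely about a single type III factor, and $(3)\Rightarrow(1)$, the gluing.

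For the local implication I would prove: \emph{if $\mathfrak{N}$ is a type III factor with separable predual and $A\in\mathfrak{N}$ is normal with $\sigma(A)$ connected and containing $0$, then $A$ is a norm limit of nilpotents of $\mathfrak{N}$} (the converse being Theorem~\ref{BHnormchar} inside $\mathcal{B}(\mathcal{H}_x)$). The structural inputs are that in a type III factor every nonzero projection is Murray--von Neumann equivalent to the identity, and that such a factor is properly infinite, hence contains a unital copy of $\mathcal{B}(\mathcal{H})$ with $\mathcal{H}$ separable and infinite-dimensional. Inside that copy pick a diagonal, hence normal, operator $A_0$ with $\sigma(A_0)=\sigma(A)$; by Theorem~\ref{BHnormchar} it is a norm limit of nilpotents of $\mathcal{B}(\mathcal{H})$, hence, the embedding being unital and isometric and so spectrum-preserving, a norm limit of nilpotents of $\mathfrak{N}$. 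On the other hand, the equivalence of all nonzero projections to $1$ forces any two normal operators of $\mathfrak{N}$ with the same spectrum to be approximately unitarily equivalent in $\mathfrak{N}$ (approximate each by a normal operator with finite spectrum supported on a common small Borel partition of the spectrum, arrange that every spectral projection which occurs is nonzero, and conjugate the two resulting partitions of unity into one another via a sum of partial isometries). Since being a norm limit of nilpotents is stable under approximate unitary equivalence, $A$ is a norm limit of nilpotents of $\mathfrak{N}$.

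For the global implication $(3)\Rightarrow(1)$ the crucial point is a \emph{uniform} bound on nilpotency degree. If $R=\|N\|$ then $\sigma(N_x)\subseteq\{\lambda\in\mathbb{C}:|\lambda|\le R\}$ for almost every $x$, and for fixed $\varepsilon>0$ the local construction produces, for almost every $x$, a nilpotent $Q_x\in\mathfrak{M}_x$ with $\|N_x-Q_x\|<\varepsilon$ whose nilpotency degree is at most a constant $d=d(\varepsilon,R)$, depending only on the cardinality of an $\varepsilon$-net of the ball of radius $R$ and not on $x$. A measurable selection argument then produces a measurable, essentially bounded field $x\mapsto Q_x$ of such nilpotents of degree $\le d$, and $Q=\int^\oplus_X Q_x\,d\mu(x)\in\mathfrak{M}$ satisfies $Q^d=0$ and $\|N-Q\|\le\varepsilon$; letting $\varepsilon\to 0$ shows $N$ is a norm limit of nilpotents of $\mathfrak{M}$. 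Together with the routine implications this closes the cycle $(1)\Rightarrow(2)\Rightarrow(5)\Rightarrow(3)\Rightarrow(1)$ with $(3)\Leftrightarrow(4)$ attached via $(3)\Rightarrow(4)\Rightarrow(5)$.

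I expect the main obstacle to be the $(3)\Rightarrow(1)$ gluing: the local nilpotent approximation has to be carried out \emph{measurably} in $x$ rather than for a fixed operator, which requires setting up the correct Borel structure on the fibres and invoking a measurable-selection theorem, in tandem with the uniform-degree observation so that the glued operator is genuinely nilpotent and not merely a norm limit of nilpotents. A secondary delicate point is the bookkeeping in the approximate-unitary-equivalence lemma for type III factors, namely matching up the finite spectral partitions of the two operators; conceptually, however, this is nothing more than the statement that in a type III factor all nonzero projections are equivalent to the identity.
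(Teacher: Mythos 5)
You should be aware that the paper does not prove this statement: Theorem~\ref{typeIII} is quoted verbatim in the introduction as Theorem~3.2 of \cite{Sk} (the author's earlier paper on von Neumann algebras), so there is no internal proof to compare your argument against. What the present paper does contain is the closely analogous argument for unital, simple, purely infinite C$^*$-algebras (Proposition~\ref{uspipos} and Theorem~\ref{uspimain}), and your local step --- approximate a normal operator by one with finite spectrum, use the equivalence of all nonzero projections to the identity in a type~III factor to transport a matrix model built from Lemma~\ref{mas}, and conclude via (approximate) unitary equivalence --- is exactly the same circle of ideas, specialized to the von Neumann setting where Murray--von Neumann comparison is even more generous.

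As a reconstruction your outline is essentially sound, with two points that carry the real weight. First, in the gluing step $(3)\Rightarrow(1)$ you correctly identify that a uniform bound on the nilpotency degree is indispensable (a direct integral of quasinilpotents, or of nilpotents of unbounded degree, need not even be quasinilpotent); but that bound cannot be extracted from Theorem~\ref{BHnormchar} used as a black box. You must run the quantitative construction: the approximant is a finite direct sum of nilpotent matrices whose sizes are controlled by the $\ell$ of Lemma~\ref{mas} (a function of $\varepsilon$ alone) and by the number of $\varepsilon$-grid boxes meeting the ball of radius $R$, so the degree bound $d(\varepsilon,R)$ does exist but only for that explicit approximant. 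The measurable-selection step then needs the set $\{(x,Q): Q\in\mathfrak{M}_x,\ Q^d=0,\ \|N_x-Q\|\le\varepsilon\}$ to be treated in the standard Borel structure on the fibres; this is routine direct-integral technology but must be said. Second, your stated characterization of $\sigma(T)$ for a decomposable $T$ in terms of $\operatorname{dist}(\lambda,\sigma(T_x))$ is only correct when the fibres are normal; for the quasinilpotent approximants in $(2)\Rightarrow(5)$ you should instead use the elementary direction $\lambda\notin\sigma(T)\Rightarrow T_x-\lambda$ invertible a.e.\ with $\|(T_x-\lambda)^{-1}\|\le\|(T-\lambda)^{-1}\|$ a.e., and a compactness argument covering $\{r\le|\lambda|\le\|T\|+1\}$ by finitely many resolvent balls to get $\sigma(T_x)=\{0\}$ a.e.; after that, Lemma~\ref{cac0} (rather than Theorem~\ref{BHnormchar}) already gives $(2)\Rightarrow(5)$ and $(4)\Rightarrow(5)$. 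The approximate-unitary-equivalence lemma is fine once the grid is shifted so that its boundary lines carry no spectral mass for either operator (only countably many offsets are bad, by separability of the predual), which makes the nonvanishing spectral projections of the two operators match up box by box.
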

In particular, the following are necessary requirements for an operator to be a norm limit of quasinilpotent operators and are derived from the facts that the set of invertible elements is an open set in any C$^*$-algebra and the semicontinuity of the spectrum.
\begin{lem}[for a proof, see Lemma 1.3 in \cite{Sk}]
\label{cac0}
Let $\mathfrak{A}$ be a C$^*$-algebra and let $T\in \mathfrak{A}$ be a limit of quasinilpotent operators from $\mathfrak{A}$.  Then the spectrum of $T$ is connected and contains zero.
\end{lem}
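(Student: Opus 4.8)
The plan is to prove the two conclusions separately, working throughout in the unitization $\widetilde{\mathfrak{A}}$ (so that the spectrum lives in a unital Banach algebra; if $\mathfrak{A}$ is already unital nothing changes). Write $T = \lim_{n\to\infty} Q_n$ with each $Q_n \in \mathfrak{A}$ quasinilpotent, so $\sigma(Q_n) = \{0\}$. The assertion that $0 \in \sigma(T)$ is immediate: each $Q_n$ is non-invertible in $\widetilde{\mathfrak{A}}$, the set of invertible elements of a unital Banach algebra is open, so the non-invertible elements form a closed set, and hence $T = \lim_n Q_n$ is non-invertible, i.e.\ $0 \in \sigma(T)$.

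For connectedness I would argue by contradiction. If $\sigma(T)$ were disconnected, write $\sigma(T) = K_1 \sqcup K_2$ with $K_1, K_2$ nonempty, compact, and disjoint, and (using $0 \in \sigma(T)$ and relabeling) $0 \in K_1$. Pick disjoint open sets $U_1 \supseteq K_1$, $U_2 \supseteq K_2$, set $\Omega := U_1 \cup U_2$, and let $f \colon \Omega \to \mathbb{C}$ be $1$ on $U_1$ and $0$ on $U_2$; this $f$ is holomorphic on $\Omega$, and $\Omega$ is an open neighborhood of $\sigma(T)$ as well as of $\sigma(Q_n) = \{0\} \subseteq U_1$ for every $n$. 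By the holomorphic functional calculus and the spectral mapping theorem, $e := f(T)$ is an idempotent with $\sigma(e) = f(\sigma(T)) = \{0, 1\}$, so $e \neq 0$ and $e \neq 1_{\widetilde{\mathfrak{A}}}$; on the other hand, since $f$ agrees with the constant function $1$ on the neighborhood $U_1$ of $\sigma(Q_n)$, one has $f(Q_n) = 1_{\widetilde{\mathfrak{A}}}$ for all $n$. If the functional calculus $S \mapsto f(S)$ is continuous at $T$ along the sequence $(Q_n)$, we conclude $1_{\widetilde{\mathfrak{A}}} = \lim_n f(Q_n) = e$, contradicting $e \neq 1_{\widetilde{\mathfrak{A}}}$.

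The main point requiring justification, and the only real obstacle, is this last continuity statement. I would prove it in the usual way: fix a cycle $\Gamma \subseteq \Omega$ (a union of two closed curves, one in $U_1$ surrounding $K_1$, one in $U_2$ surrounding $K_2$) so that $f(S) = \frac{1}{2\pi i}\int_\Gamma f(z)(z-S)^{-1}\,dz$ whenever $\sigma(S) \subseteq \Omega$ and $\Gamma$ encloses $\sigma(S)$. Since the resolvent $z \mapsto (z-T)^{-1}$ is bounded on the compact set $\Gamma$, the resolvent identity $(z-Q_n)^{-1} - (z-T)^{-1} = (z-Q_n)^{-1}(Q_n - T)(z-T)^{-1}$ together with a Neumann-series estimate shows that for $n$ large $z - Q_n$ is invertible on $\Gamma$ with uniformly bounded inverse and $(z - Q_n)^{-1} \to (z-T)^{-1}$ uniformly on $\Gamma$; integrating gives $f(Q_n) \to f(T)$. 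This step is precisely where the openness of the invertibles and the upper semicontinuity of the spectrum are used. Finally, I would remark that the same argument yields the stronger fact that a norm limit of elements each having connected spectrum has connected spectrum: one only needs that a connected $\sigma(Q_n) \subseteq \Omega = U_1 \sqcup U_2$ lies entirely in $U_1$ or entirely in $U_2$, whence $f(Q_n) \in \{0, 1_{\widetilde{\mathfrak{A}}}\}$, and a convergent sequence taking values in the two-point set $\{0, 1_{\widetilde{\mathfrak{A}}}\}$ has its limit there as well.
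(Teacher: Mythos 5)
Your proof is correct and follows exactly the route the paper indicates for this lemma (which it attributes to Lemma 1.3 of \cite{Sk}): openness of the invertibles gives $0\in\sigma(T)$, and the stability of the Riesz idempotent $f(T)$ under small perturbations --- i.e.\ the semicontinuity of the spectrum via the holomorphic functional calculus --- rules out a disconnection. The continuity step you single out is justified precisely as you describe, so there is nothing to add.
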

In this paper, which is a continuation of \cite{Sk}, we will examine whether conditions can be given to determine when a normal operator is a limit of nilpotent or quasinilpotent operators in various C$^*$-algebras.  
\par
Section \ref{sec:USPI} will examine this question in the context of unital, simple, purely infinite C$^*$-algebras.  As unital, simple, purely infinite C$^*$-algebras have a plethora of projections with particular structure similar to that of von Neumann algebras, a complete solution to our problem will be obtained for said algebras (see Theorem \ref{uspimain}).  In particular, as the Calkin algebra is a unital, simple, purely infinite C$^*$-algebra, Section \ref{sec:USPI} will generalize Theorem \ref{Calkin}.  Section \ref{sec:USPI} will also examine auxiliary questions such as the closure of the span of nilpotent opertors and the distance from a projection to the nilpotent operators in any unital, simple, purely infinite C$^*$-algebra.  
\par
Section \ref{sec:AFALGEBRAS} will examine this question in the context of AF C$^*$-algebras.  AF C$^*$-algebras are one generalization of finite dimensional C$^*$-algebras and thus it is surprising that the closure of nilpotent operators in said algebras is incredible complex.  In particular, Section \ref{sec:AFALGEBRAS} relates the norm closure of the nilpotent operators in AF C$^*$-algebras to the asymptotic behaviour of nilpotent matrices as the dimension of the matrices are allowed to increase and will demonstrate the existence of AF C$^*$-algebras with non-zero normal operators in the closure of the nilpotent operators.  Since the submission of this paper, the author has used one of the main results of Section \ref{sec:AFALGEBRAS}, Theorem \ref{UHFnil}, to obtain additional interesting results along with an additional (yet more complicated) proof of Theorem \ref{uspimain} (see \cite{Sk2}).
\par
Section \ref{sec:READSGEN} will generalize a construction from \cite{Re} to demonstrate that there exists a separable, nuclear, quasidiagonal C$^*$-algebra where every operator is a norm limit of nilpotent operators.  The cone of this C$^*$-algebra is then AF-embeddable and it will be demonstrated this cone has also has the property that every operator is a norm limit of nilpotent operators.
\begin{nota}
The following will be the notation used throughout the paper.
\begin{itemize}
	\item $\mathcal{M}_n(\mathfrak{A})$ - the C$^*$-algebra of $n \times n$ matrices with entries in a C$^*$-algebra $\mathfrak{A}$.
	\item $Nor(\mathfrak{A})$ - the set of normal operators of a C$^*$-algebra $\mathfrak{A}$.
	\item $\mathfrak{A}_{sa}$ - the set of self-adjoint operators of a C$^*$-algebra $\mathfrak{A}$.
	\item $\mathfrak{A}_+$ - the set of positive operators of a C$^*$-algebra $\mathfrak{A}$.
	\item $Nil(\mathfrak{A})$ - the set of nilpotent operators of a C$^*$-algebra $\mathfrak{A}$.
	\item $QuasiNil(\mathfrak{A})$ - the set of quasinilpotent operators of a C$^*$-algebra $\mathfrak{A}$.
	\item $\sigma(T)$ - the spectrum of an operator $T$.
	\item $\mathfrak{A}^{-1}$ - the set of invertible operators of a unital C$^*$-algebra $\mathfrak{A}$.
	\item $\mathfrak{A}^{-1}_0$ - the connected component of the identity in $\mathfrak{A}^{-1}$.
	\item $dist(A, B)$ - the distance between two sets $A$ and $B$ in a normed linear space.
	\item $diag(a_1,\ldots, a_n)$ - the $n \times n$ diagonal matrix with $a_1,\ldots, a_n$ along the diagonal.
\end{itemize}
\end{nota}

\section{Purely Infinite C$^*$-Algebras}
\label{sec:USPI}

In this section we will prove our main result, Theorem \ref{uspimain}, which completely classifies when a normal operator in a unital, simple, purely infinite C$^*$-algebra is a norm limit of nilpotent and quasinilpotent operators.  The main tools of the proof are the existence and equivalence of projections in unital, simple, purely infinite C$^*$-algebras and Lemma \ref{mas} which gives positive matrices of norm one that are asymptotically approximated by nilpotent matrices as we allow the size of the matrices to increase.  In fact, in the case that $\mathfrak{A}^{-1}_0 = \mathfrak{A}^{-1}$, the conditions of Theorem \ref{uspimain} are identical to the conditions of Theorem \ref{BHnormchar}.  This is not a surprise as the proof of Theorem \ref{uspimain} relies only on Lemma \ref{mas} and the structure of the projections in a unital, simple, purely infinite C$^*$-algebra.  In fact, the proof of Theorem \ref{uspimain} can be adapted to prove Theorem \ref{BHnormchar}.  When the proof of Theorem \ref{uspimain} has been completed, we will apply similar techniques to obtain information about the closed span of nilpotent operators and the distance from a fixed projection to the nilpotent operators in unital, simple, purely infinite C$^*$-algebras.  
\par
For completeness we include an outline of the following previously known result.
\begin{lem}[See A1.14 in \cite{AFHV}]
\label{mas}
For each $n \in \mathbb{N}$ there exists a positive matrix $A_n \in \mathcal{M}_n(\mathbb{C})$ with norm one such that $\lim_{n\to\infty} dist(A_n, Nil(\mathcal{M}_n(\mathbb{C}))) = 0$.
\end{lem}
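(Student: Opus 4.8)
The plan is to build the matrices $A_n$ by hand, guided by the infinite‑dimensional picture. A positive operator $P$ with $\sigma(P)=[0,1]$ on a separable infinite–dimensional Hilbert space is a norm limit of nilpotents by Theorem \ref{BHnormchar}, so one wants $A_n$ to be a finite–dimensional ``approximant'' of such a $P$ — but with care. The naive choice $A_n=diag(\tfrac1n,\tfrac2n,\dots,1)$ does not work: if $N\in Nil(\mathcal M_n(\mathbb C))$ then $tr(N)=0$, while $|N_{ii}-(A_n)_{ii}|\le\|A_n-N\|$ for each $i$, so $\|A_n-N\|\ge\tfrac1n\,tr(A_n)$, and this stays bounded away from $0$ unless the eigenvalue distribution of $A_n$ concentrates near $0$. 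So I would take $A_n$ to be a positive matrix with $\|A_n\|=1$ whose eigenvalues form an $\eta_n$–net of $[0,1]$ with $\eta_n\to 0$, but with the multiplicities of the larger eigenvalues so small that $tr(A_n)=o(n)$ (equivalently, the normalized eigenvalue counting measures converge weakly to $\delta_0$ while the supports still fill $[0,1]$); this is the family produced in A1.14 of \cite{AFHV}.

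Next I would exhibit a nilpotent $N_n$ with $\|A_n-N_n\|\to 0$. It is useful to notice what such an $N_n$ must look like: since $A_n=A_n^*$, we have $\|N_n-N_n^*\|=\|(A_n-N_n)-(A_n-N_n)^*\|\le 2\|A_n-N_n\|$, so $N_n$ is forced to be nearly self‑adjoint; hence $A_n$ is close to $H_n:=\tfrac12(N_n+N_n^*)$, which is self‑adjoint with $tr(H_n)=0$ and $H_n\ge -2\|A_n-N_n\|\,I$ (consistent with $A_n\ge 0$ only because $H_n$ has just a thin cluster of tiny negative eigenvalues). Conversely, it suffices to construct a nilpotent $N_n$ with $\|N_n-N_n^*\|\to 0$ whose real part, after a vanishing perturbation and a normalization $A_n:=\lambda_n\big(\tfrac12(N_n+N_n^*)+\varepsilon_n I\big)$ with $\varepsilon_n\to 0$, is a positive matrix of norm one. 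Such an $N_n$ I would assemble from a ``staircase'' adapted to the eigenspaces of $A_n$ — $N_n$ shifts the eigenspace of $A_n$ for a value $t$ into eigenspaces for slightly smaller values — with the block dimensions tuned (as the trace constraint above dictates) so that the error is governed by the mesh $\eta_n$ rather than by $\|A_n\|$.

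The step I expect to be the main obstacle is precisely this last norm estimate. A crude staircase nilpotent is not good enough: because it carries the top eigenvector of $A_n$ (eigenvalue $\approx 1$) into an orthogonal eigenspace, $\|A_n-N_n\|$ remains of order one no matter how fine the net is. Forcing $\|A_n-N_n\|\to 0$ requires the nilpotent to reproduce the action of $A_n$ on the ``bulk'' while absorbing the large‑eigenvalue part through the transient (pseudospectral) growth of a suitably weighted nilpotent, and making this quantitative is the technical heart of the matter. In the write‑up I would carry it out via the explicit matrices and explicit estimate of A1.14 in \cite{AFHV}, whose verification is a direct though somewhat delicate computation; one can also note the matching lower bound $dist(A_n,Nil(\mathcal M_n(\mathbb C)))\ge (\sqrt n+2)^{-1}$ valid for \emph{every} positive norm‑one $A_n$ (it follows from $tr(N_n^2)=0$, which forces the Hilbert–Schmidt norms of the real and imaginary parts of $N_n$ to agree), showing that the rate of convergence cannot be better than $O(n^{-1/2})$.
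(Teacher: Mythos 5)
Your outline correctly identifies the two obstructions (the trace of a nilpotent is zero, which forces the eigenvalue distribution of $A_n$ to pile up at $0$, and any good nilpotent approximant of a self-adjoint matrix must be nearly self-adjoint), and your side remark that $dist(A_n, Nil(\mathcal{M}_n(\mathbb{C}))) \geq (\sqrt{n}+2)^{-1}$ for \emph{every} positive norm-one $A_n$ is correct. But as a proof the proposal has a genuine gap: the entire content of the lemma is the existence of a nilpotent of norm about one that is within $o(1)$ of a self-adjoint matrix, and at exactly that point you write that you would ``carry it out via the explicit matrices and explicit estimate of A1.14 in \cite{AFHV}.'' The ``staircase adapted to the eigenspaces of $A_n$'' is not a construction --- you yourself observe that the crude version fails at the top of the spectrum --- and no quantitative mechanism is supplied that drives the error to zero. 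So the technical heart of the argument is cited rather than proved.

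For comparison, the paper's proof is short and completely explicit: set $Q'_n = \sum_{j=1}^{n-1} \frac{1}{j} q_n^j$ with $q_n$ the Jordan nilpotent, invoke Kahan's inequality $\left\|Re(Q'_n)\right\| \leq \frac{\pi}{2}$ to see that $Q_n := \frac{\mathrm{i}}{\ln(n)} Q'_n$ is a nilpotent whose real part $H_n$ has norm $1 + O(1/\ln(n))$ while $\left\|H_n - Q_n\right\| \leq \frac{\pi}{2\ln(n)}$; normalize to get self-adjoint $B_n$ of norm one close to nilpotents, and then take $A_n := B_n^2$, which is positive of norm one and close to the (still nilpotent) square of the approximant. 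Note also that your proposed passage from self-adjoint to positive --- shifting by $\varepsilon_n I$ with $\varepsilon_n \to 0$ --- demands strictly more than this construction delivers: $\sigma(H_n)$ is symmetric about $0$ (the imaginary part of a real matrix has symmetric spectrum) and $\left\|H_n\right\| \to 1$, so the negative spectrum of $H_n$ reaches down to $-1+o(1)$ and no vanishing shift makes it positive. The squaring trick, which converts the self-adjoint case into the positive case with no extra hypothesis because the square of a nilpotent is nilpotent, is the one concrete idea missing from your outline that you could have supplied without reproducing Kahan's estimate.
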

\begin{proof}
Let 
\[
Q'_n := \sum^{n-1}_{j=1} \frac{1}{j} q_n^j \in \mathcal{M}_n(\mathbb{C})
\]
where $q_n \in \mathcal{M}_n(\mathbb{C})$ is the nilpotent Jordan block of order $n$.  It was shown in \cite{Ka} that $\left\|Re(Q'_n)\right\| \leq \frac{\pi}{2}$.  If $Q_n := \frac{\mathrm{i}}{\ln(n)} Q'_n \in \mathcal{M}_n(\mathbb{C})$ and $H_n := Re(Q_n) \in \mathcal{M}_n(\mathbb{C})$, then, by \cite{Ka}, $-I_n \leq H_n \leq I_n$, $Q_n \in Nil(\mathcal{M}_n(\mathbb{C}))$,
\[
|\left\|H_n\right\| - 1| \leq \frac{\ln(2)}{2\ln(n)},\mbox{ and }\left\|H_n - Q_n\right\| \leq \frac{\pi}{2\ln(n)}.
\]  
\par
By normalizing each $H_n$, we obtain self-adjoint matrices $B_n \in \mathcal{M}_n(\mathbb{C})$ with norm one such that 
\[
\lim_{n\to\infty} dist(B_n, Nil(\mathcal{M}_n(\mathbb{C}))) = 0.
\]
For each $n \in \mathbb{N}$ let $A_n := B_n^2$.  Hence $A_n \in \mathcal{M}_n(\mathbb{C})$ is a positive matrix with norm one.  Since the square of any nilpotent matrix is a nilpotent matrix, it is easy to obtain
\[
\lim_{n\to\infty} dist(A_n, Nil(\mathcal{M}_n(\mathbb{C}))) = 0
\]
as desired.
\end{proof}
Although the following two results are not required for the proof of Theorem \ref{uspimain}, we include them here for completeness and for later discussions.  Lemma \ref{distrestr} appears in \cite{Sk} and we include its simple proof for convenience.
\begin{lem}[Lemma 2.3 in \cite{Sk}]
\label{distrestr}
Let $\mathfrak{A}$ be a C$^*$-algebra and let $A \in \mathfrak{A}_+$.  Suppose $A$ is not invertible and $\sigma(A) = \{0 = \lambda_0 < \lambda_1 < \cdots < \lambda_k\}$.  Then
\[
dist(A, QuasiNil(\mathfrak{A})) \geq \frac{1}{2} \max_{1\leq i \leq k} |\lambda_i - \lambda_{i-1}|.
\]
\end{lem}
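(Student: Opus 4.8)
The plan is to argue by contradiction: suppose $T \in QuasiNil(\mathfrak{A})$ satisfies $\|A - T\| < r$, where $r := \tfrac{1}{2}\max_{1\le i\le k}(\lambda_i - \lambda_{i-1})$, and derive a contradiction from the fact that $\sigma(T) = \{0\}$ is a single point while $\sigma(A)$ is split by a gap of width $2r$. First I would pass to the unitization of $\mathfrak{A}$; this changes neither the norm, nor $\sigma(A)$, nor the set of quasinilpotent elements (an element of $\mathfrak{A}$ is quasinilpotent in $\mathfrak{A}$ if and only if it is so in $\widetilde{\mathfrak{A}}$), so we may assume $\mathfrak{A}$ is unital. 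Fix an index $i_0$ realizing the maximum and set $\mu := \tfrac12(\lambda_{i_0-1}+\lambda_{i_0})$, so that $\operatorname{dist}(\mu,\sigma(A)) = r$ and, since $\sigma(A)\subseteq\mathbb{R}$ with $\lambda_{i_0-1},\lambda_{i_0}$ the points of $\sigma(A)$ nearest $\mu$, in fact $\operatorname{dist}(\mu+iy,\sigma(A)) = \sqrt{r^2+y^2}$ for all $y\in\mathbb{R}$; also $\mu - r = \lambda_{i_0-1}\ge 0$, so $\mu>0$. Let $P := \chi_{[\lambda_{i_0},\infty)}(A)$; because $\sigma(A)$ is finite, $P$ is a polynomial in $A$ vanishing at $0$, hence $P$ is a nonzero projection in $\mathfrak{A}$.

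The key idea is a quantitative version of the reasoning behind Lemma~\ref{cac0}: $P$ is the Riesz idempotent of $A$ for the part $\{\lambda_{i_0},\dots,\lambda_k\}$ of its spectrum, whereas the Riesz idempotent of $T$ for any bounded region lying strictly in the half–plane $\operatorname{Re}(z)>\mu$ must vanish, since $\sigma(T)=\{0\}$ lies in $\operatorname{Re}(z)<\mu$. Applying this to the region $\{\operatorname{Re}(z)>\mu\}\cap\{|z|<R\}$ and letting $R\to\infty$ — the arc at infinity contributes nothing because $(z-T)^{-1}-(z-A)^{-1}=(z-T)^{-1}(T-A)(z-A)^{-1}=O(|z|^{-2})$ there — I would integrate the resolvent identity down the line $\operatorname{Re}(z)=\mu$. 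Since the Riesz idempotents of $A$ and $T$ for that region are $P$ and $0$ respectively, this yields
\[
2P \;=\; \frac{1}{\pi}\int_{-\infty}^{\infty}(\mu+iy-T)^{-1}\,(T-A)\,(\mu+iy-A)^{-1}\,dy,
\]
with the integral absolutely convergent (the integrand is $O(y^{-2})$). Taking norms and using $\|(\mu+iy-A)^{-1}\| = (r^2+y^2)^{-1/2}$ together with the Neumann-series bound $\|(\mu+iy-T)^{-1}\|\le\bigl(\sqrt{r^2+y^2}-\|A-T\|\bigr)^{-1}$ (valid because $\|A-T\|<r\le\sqrt{r^2+y^2}$), I obtain after evaluating an elementary integral an inequality of the form $2=\|2P\|\le\Psi(\|A-T\|/r)$ for an explicit continuous increasing function $\Psi$ with $\Psi(0)=0$; since $\Psi$ is small when its argument is small, this is the contradiction.

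The step I expect to be the main obstacle is making this last estimate sharp enough that the contradiction occurs for \emph{every} $\|A-T\|<r$, rather than only for $\|A-T\|$ smaller than some fixed fraction of $r$; this forces one to choose the integration contour (equivalently, the cut-off function defining $P$) optimally for the given gap and to use fully that $\sigma(A)$ is real and that the gap has width exactly $2r$. A preliminary reduction that helps control the resolvent on the contour is to note that $A$ commutes with $P$ and $2P-1$ is a symmetry, so $\|A-T\| = \|A-(2P-1)T(2P-1)\| \ge \|A-S\|$ where $S := PTP + (1-P)T(1-P)$; replacing $T$ by the block-diagonal $S$ lets one work in the two corners $P\mathfrak{A}P$ and $(1-P)\mathfrak{A}(1-P)$ separately, where the self-adjoint parts $PAP$ and $(1-P)A(1-P)$ have spectra $\{\lambda_{i_0},\dots,\lambda_k\}$ and $\{\lambda_0,\dots,\lambda_{i_0-1}\}$ and sharpen the bounds $\|(\mu+iy-S)^{-1}\|$. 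Finally I would take the maximum over $i_0$ to obtain the stated estimate for the largest gap.
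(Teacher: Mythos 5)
Your setup is sound and your diagnosis of where the difficulty lies is exactly right, but the difficulty is fatal to the route you chose rather than a technical nuisance. Write $\delta := \|A-T\|/r$. Carrying out your estimate on the vertical line $\operatorname{Re}(z)=\mu$, taking norms inside the integral and using $\|(\mu+iy-A)^{-1}\|=(r^2+y^2)^{-1/2}$ together with $\|(\mu+iy-T)^{-1}\|\le(\sqrt{r^2+y^2}-\|A-T\|)^{-1}$, gives
\[
1=\|P\|\;\le\;\frac{1}{2\pi}\int_{-\infty}^{\infty}\frac{\|A-T\|\,dy}{\bigl(\sqrt{r^2+y^2}-\|A-T\|\bigr)\sqrt{r^2+y^2}}\;=\;\frac{2\delta}{\pi\sqrt{1-\delta^2}}\arctan\sqrt{\frac{1+\delta}{1-\delta}},
\]
and the right-hand side diverges like $(1-\delta)^{-1/2}$ as $\delta\to1^-$; it is below $1$ only for $\delta$ up to roughly $0.78$. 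So this argument proves $dist(A,QuasiNil(\mathfrak{A}))\ge c\,r$ with $c\approx 0.78$, not the claimed bound. No admissible contour repairs this: any contour separating $\{\lambda_{i_0},\dots,\lambda_k\}$ from $0$ must cross the real axis inside the gap, at a point where $dist(z,\sigma(A))\le r$, and there the only available resolvent bound for $T$, namely $\|(z-T)^{-1}\|\le(dist(z,\sigma(A))-\|A-T\|)^{-1}$, blows up as $\|A-T\|\to r$. Your proposed block-diagonal reduction does not rescue the estimate either: $S=PTP+(1-P)T(1-P)$ need not be quasinilpotent (compressions of nilpotents are generally not nilpotent), so you lose the one fact the contradiction rests on, namely that the Riesz idempotent of the perturbed operator for the enclosed region vanishes.

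The fix — and what the paper actually does by citing Theorem 1.1 of Herrero's book — is to abandon the quantitative estimate of $\|P_A-P_T\|$ entirely and use the homotopy form of the Riesz idempotent perturbation theorem. Let $\Omega$ be a Cauchy domain with $\sigma(A)\cap\Omega=\{\lambda_{i_0},\dots,\lambda_k\}$, $0\notin\overline{\Omega}$, and $dist(z,\sigma(A))\ge r$ for all $z\in\partial\Omega$. For $T_t:=A+t(T-A)$ one has $z-T_t=(z-A)\bigl(1-t(z-A)^{-1}(T-A)\bigr)$, which is invertible for every $z\in\partial\Omega$ and every $t\in[0,1]$ as soon as $\|A-T\|<r$ — this is where the hypothesis enters, and it works for \emph{all} such $T$, with no loss. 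Hence the idempotents $E_t:=\frac{1}{2\pi i}\oint_{\partial\Omega}(z-T_t)^{-1}\,dz$ are defined and depend continuously on $t$; since a nonzero idempotent in a Banach algebra has norm at least $1$, the set $\{t:E_t\ne0\}$ is clopen in $[0,1]$, and $E_0=P\ne0$ forces $E_1\ne0$, i.e.\ $\sigma(T)\cap\Omega\ne\emptyset$, contradicting $\sigma(T)=\{0\}$ and $0\notin\Omega$. This version needs only invertibility on the contour, not smallness of the integral, which is why it yields the sharp constant $\tfrac12$.
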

\begin{proof}
It is easy to see that there exists $x_0,y_0 \in \sigma(A)$ and a Cauchy domain $\Omega$ such that such that $|x_0 - y_0| = \max_{1\leq i \leq k} |\lambda_i - \lambda_{i-1}|$, $0 \notin \Omega$, and
\[
\inf\left\{ \left\|(\lambda I - A)^{-1}\right\|^{-1} \, \mid \, \lambda \in \delta \Omega\right\} = \frac{1}{2}|x_0-y_0|.
\]
If $M \in \mathfrak{A}$ is such that
\[
\left\|A - M\right\| < \inf\left\{ \left\|(\lambda I - A)^{-1}\right\|^{-1} \, \mid \, \lambda \in \delta \Omega\right\}
\]
then, by an application of the Analytic Functional Calculus for Banach Algebras (see Theorem 1.1 in \cite{He1}), $\sigma(M) \cap \Omega \neq \emptyset$.  As $0 \notin \Omega$ and the spectrum of any quasinilpotent operator is $\{0\}$, the result trivially follows.
\end{proof}
\begin{cor}
\label{densespectra}
Let $\{A_n\}_{n\geq 1}$ be the positive matrices of norm one from Lemma \ref{mas}.  For every $m \in \mathbb{N}$ there exists an $N_m \in \mathbb{N}$ such that 
\[
\sigma(A_n)\cap \left[\frac{k}{m}, \frac{k+1}{m}\right) \neq \emptyset
\]
for all $k \in \{0, 1, \ldots, m-1\}$ and for all $n \geq N_m$.  That is, the spectrum of the matrices $A_n$ are asymptotically dense in $[0,1]$.
\end{cor}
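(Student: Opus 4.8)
The plan is to establish the contrapositive in quantitative form: I will show that if $\sigma(A_n)$ misses one of the intervals $[k/m,(k+1)/m)$ with $k\in\{0,1,\ldots,m-1\}$, then $dist(A_n,QuasiNil(\mathcal{M}_n(\mathbb{C})))\geq \frac{1}{2m}$, and hence also $dist(A_n,Nil(\mathcal{M}_n(\mathbb{C})))\geq\frac{1}{2m}$ since $Nil(\mathcal{M}_n(\mathbb{C}))\subseteq QuasiNil(\mathcal{M}_n(\mathbb{C}))$. Once this is in hand, Lemma \ref{mas} produces an $N_m\in\mathbb{N}$ with $dist(A_n,Nil(\mathcal{M}_n(\mathbb{C})))<\frac{1}{2m}$ for all $n\geq N_m$, and that $N_m$ is exactly what is required.

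To prove the quantitative bound I would reuse the mechanism behind Lemma \ref{distrestr}: for any Cauchy domain $\Omega$ with $0\notin\Omega$ and $\sigma(A_n)\cap\Omega\neq\emptyset$, the Analytic Functional Calculus forces $\sigma(M)\cap\Omega\neq\emptyset$ for every $M$ with $\|A_n-M\|<\inf\{\|(\lambda I_n-A_n)^{-1}\|^{-1}:\lambda\in\partial\Omega\}$, so such an $M$ is not quasinilpotent. Since $A_n$ is positive of norm one, $\sigma(A_n)\subseteq[0,1]$ with $1\in\sigma(A_n)$, and omitting $[k/m,(k+1)/m)$ gives $\sigma(A_n)\subseteq[0,k/m)\cup[(k+1)/m,1]$. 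I would then take $\Omega$ to be the open rectangle $\{z\in\mathbb{C}:\frac{2k+1}{2m}<Re(z)<2,\ |Im(z)|<1\}$, which is a Cauchy domain: its closure avoids $0$ (every point has real part at least $\frac{2k+1}{2m}\geq\frac{1}{2m}>0$), and it contains $1$ because $\frac{2k+1}{2m}<1$ whenever $k\leq m-1$, so $\sigma(A_n)\cap\Omega\neq\emptyset$.

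It then remains to bound the resolvent from below along $\partial\Omega$. As $A_n$ is self-adjoint, $\|(\lambda I_n-A_n)^{-1}\|^{-1}=dist(\lambda,\sigma(A_n))$, so I only need this distance to be at least $\frac{1}{2m}$ on each of the four edges. On the left edge $Re(\lambda)=\frac{2k+1}{2m}$ this follows by comparing real parts with the two spectral pieces $[0,k/m)$ and $[(k+1)/m,1]$, each lying at horizontal distance at least $\frac{1}{2m}$ from that line; on the right edge $Re(\lambda)=2$ the distance to $[0,1]\supseteq\sigma(A_n)$ is at least $1$; and on the two horizontal edges $|Im(\lambda)|=1$ forces distance at least $1$ from the real set $\sigma(A_n)$. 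Feeding $\frac{1}{2m}$ into the estimate above yields $dist(A_n,QuasiNil(\mathcal{M}_n(\mathbb{C})))\geq\frac{1}{2m}$, completing the argument.

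The only step calling for any care is this geometric verification — the contour must stay $\frac{1}{2m}$-away from $\sigma(A_n)$ on both sides of the omitted interval at once, which is precisely why I straighten the relevant edge into the vertical line through the midpoint of the gap — but it is the same bookkeeping as in the proof of Lemma \ref{distrestr}, so I do not anticipate a genuine obstacle. If one prefers, the case where $A_n$ is not invertible can bypass the contour entirely and be read off Lemma \ref{distrestr} directly, since an omitted interval $[k/m,(k+1)/m)$ with $k\geq1$ forces two consecutive eigenvalues of $A_n$ more than $\frac{1}{m}$ apart (and when $k=0$ one has $0\in\sigma(A_n)\cap[0,1/m)$ outright), leaving only the invertible case to the contour argument.
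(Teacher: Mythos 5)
Your proof is correct and is essentially the paper's argument: both deduce the conclusion from Lemma \ref{mas} together with the quantitative lower bound on $dist(A_n, QuasiNil(\mathcal{M}_n(\mathbb{C})))$ that a gap in $\sigma(A_n)$ forces via the contour/resolvent mechanism underlying Lemma \ref{distrestr}. If anything, your version is slightly more careful, since you make the Cauchy domain explicit and handle the case $0 \notin \sigma(A_n)$, which the literal hypotheses of Lemma \ref{distrestr} exclude but the paper's one-line proof glosses over.
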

\begin{proof}
Since  $\lim_{n\to\infty} dist(A_n, Nil(\mathcal{M}_n(\mathbb{C}))) = 0$ by Lemma \ref{mas}, Lemma \ref{distrestr} implies $\lim_{n\to\infty} dist(\sigma(A_n), 0) = 0$ and the distance between adjacent eigenvalues (when arranged in increasing order) of each $A_n$ tends to zero as $n$ tends to infinity.  Hence the result easily follows.
\end{proof}
We will require the use of the following trivial result in the proof of Theorem \ref{uspimain}.
\begin{lem}
\label{densespectra2}
Let $\mathfrak{A}$ be a C$^*$-algebra, let $N \in Nor(\mathfrak{A})$, let $(N_n)_{n\geq1}$ be a sequence of normal operators of $\mathfrak{A}$ such that $N = \lim_{n\to\infty} N_n$, and let $U$ be an open subset of $\mathbb{C}$ such that $U \cap \sigma(N) \neq \emptyset$.  Then there exists an $k \in \mathbb{N}$ such that $\sigma(N_n) \cap U \neq \emptyset$ for all $n \geq k$.
\end{lem}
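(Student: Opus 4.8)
The plan is to deduce the lemma from the following quantitative estimate: if $\lambda_0 \in \sigma(N)$ and $M \in Nor(\mathfrak{A})$, then $dist(\lambda_0, \sigma(M)) \leq \|N - M\|$. Granting this, the lemma follows at once: choose $\lambda_0 \in U \cap \sigma(N)$ and, using that $U$ is open, an $\varepsilon > 0$ with $\{z \in \mathbb{C} : |z - \lambda_0| < \varepsilon\} \subseteq U$; since $N = \lim_{n\to\infty} N_n$ there is a $k \in \mathbb{N}$ with $\|N - N_n\| < \varepsilon$ for all $n \geq k$, whence $dist(\lambda_0, \sigma(N_n)) < \varepsilon$ for all such $n$, and as $\sigma(N_n)$ is compact and nonempty this produces a point of $\sigma(N_n)$ lying in $\{z : |z - \lambda_0| < \varepsilon\} \subseteq U$.

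To establish the estimate, first reduce to the unital case by passing to the unitization of $\mathfrak{A}$, which alters no spectrum except possibly by adjoining the point $0$ and which preserves normality. The one nontrivial ingredient is the resolvent-norm identity for normal operators: if $M$ is normal and $\lambda \notin \sigma(M)$, then $\|(\lambda I - M)^{-1}\| = dist(\lambda, \sigma(M))^{-1}$, which holds because $(\lambda I - M)^{-1}$ is again normal (so its norm equals its spectral radius) and its spectrum is $\{(\lambda - \mu)^{-1} : \mu \in \sigma(M)\}$ by the spectral mapping theorem. Now suppose toward a contradiction that $dist(\lambda_0, \sigma(M)) > \|N - M\|$; then $\lambda_0 \notin \sigma(M)$ and $\|(\lambda_0 I - M)^{-1}\| \cdot \|N - M\| < 1$, so the factor $I - (\lambda_0 I - M)^{-1}(M - N)$ is invertible by a Neumann series. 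Since $\lambda_0 I - N = (\lambda_0 I - M)\bigl(I - (\lambda_0 I - M)^{-1}(M - N)\bigr)$, it follows that $\lambda_0 I - N$ is invertible, contradicting $\lambda_0 \in \sigma(N)$.

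There is no real obstacle here; the statement is genuinely a routine consequence of the elementary theory of resolvents, as the lemma's label suggests. The only point deserving attention is that normality is used in an essential way — exactly once, through the resolvent-norm identity — and cannot be dispensed with, since the spectrum of a general element of a Banach algebra need not be lower semicontinuous (only upper semicontinuous); the reduction to the unital case is harmless.
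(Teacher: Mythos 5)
Your proof is correct, but it takes a different route from the paper's. The paper's argument is soft: it picks $\lambda \in U \cap \sigma(N)$, uses Urysohn's Lemma to produce a continuous $f$ with $f|_{U^c}=0$ and $f(\lambda)=1$, and then exploits continuity of the continuous functional calculus to get $f(N)=\lim_n f(N_n)$, so that $\sigma(N_n)\cap U=\emptyset$ (hence $f(N_n)=0$) for infinitely many $n$ would force $f(N)=0$, a contradiction. You instead prove the quantitative statement that every point of $\sigma(N)$ lies within $\left\|N-M\right\|$ of $\sigma(M)$ for normal $M$, via the resolvent-norm identity $\left\|(\lambda I-M)^{-1}\right\|=dist(\lambda,\sigma(M))^{-1}$ and the standard openness-of-invertibles perturbation bound; this is strictly stronger than the lemma and isolates exactly where normality enters (the identity fails for non-normal elements, consistent with the failure of lower semicontinuity of the spectrum in general). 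Both arguments are standard and both use normality essentially. One small slip: your factorization should read $\lambda_0 I - N = (\lambda_0 I - M)\bigl(I + (\lambda_0 I - M)^{-1}(M - N)\bigr)$ (or equivalently with $I - (\lambda_0 I - M)^{-1}(N-M)$); as written, the product equals $\lambda_0 I - 2M + N$. Since the Neumann series applies equally to $I+X$ with $\left\|X\right\|<1$, this is a typo rather than a gap, and the conclusion stands.
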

\begin{proof}
Fix $\lambda \in U \cap \sigma(N)$.  By Urysohn's Lemma there exists a continuous function $f$ on $\mathbb{C}$ such that $f|_{U^c} = 0$ yet $f(\lambda) =1$.  Note $f(N) = \lim_{n\to\infty} f(N_n)$ by standard functional calculus results.  If $\sigma(N_n) \cap U  = \emptyset$ for infinitely many $n$, then $f(N_n) =0$ for infinitely many $n$ yet $f(N) \neq 0$ by construction.  This is clearly a contradiction.
\end{proof}
Now we will prove Theorem \ref{uspimain} for positive operators.  Although Proposition \ref{uspipos} is not required in the proof of Theorem \ref{uspimain}, the proof of Proposition \ref{uspipos} contains all the conceptual difficulties and technical approximations thus easing in the comprehension of Theorem \ref{uspimain}.
\begin{prop}
\label{uspipos}
Let $\mathfrak{A}$ be a unital, simple, purely infinite C$^*$-algebra and let $A \in \mathfrak{A}_{+}$.  Then the following are equivalent:
\begin{enumerate}
	\item $A \in \overline{Nil(\mathfrak{A})}$.
	\item $A \in \overline{QuasiNil(\mathfrak{A})}$.
	\item The spectrum of $A$ is connected and contains zero.
\end{enumerate}
\end{prop}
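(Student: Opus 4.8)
For the two easy implications: $(1)\Rightarrow(2)$ holds because $Nil(\mathfrak A)\subseteq QuasiNil(\mathfrak A)$, and $(2)\Rightarrow(3)$ is exactly Lemma \ref{cac0}. So the content is $(3)\Rightarrow(1)$. After rescaling I may assume $\|A\|=1$; since $\sigma(A)\subseteq[0,\infty)$ is connected and contains $0$, this forces $\sigma(A)=[0,1]$. I would then reduce the whole problem to two assertions: (a) for every $\epsilon>0$ there is a positive element $H_\epsilon\in\mathfrak A$ with $\sigma(H_\epsilon)=[0,1]$ and $dist(H_\epsilon,Nil(\mathfrak A))<\epsilon$; and (b) any two positive elements of $\mathfrak A$ with spectrum $[0,1]$ are approximately unitarily equivalent. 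Granting (a) and (b): choose a unitary $u$ with $\|uH_\epsilon u^*-A\|<\epsilon$; since a unitary conjugate of a nilpotent is again nilpotent, $dist(A,Nil(\mathfrak A))\le\|A-uH_\epsilon u^*\|+dist(uH_\epsilon u^*,Nil(\mathfrak A))<2\epsilon$, and letting $\epsilon\to0$ while using that $\overline{Nil(\mathfrak A)}$ is closed yields $A\in\overline{Nil(\mathfrak A)}$.

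To prove (a) I would feed Lemma \ref{mas} and Corollary \ref{densespectra} into the projection structure of $\mathfrak A$. Fix $m$ with $\tfrac1m<\epsilon$ and choose $n$ so large that the positive norm-one matrix $A_n\in\mathcal M_n(\mathbb C)$ satisfies $dist(A_n,Nil(\mathcal M_n(\mathbb C)))<\epsilon$ and $\sigma(A_n)$ is $\tfrac1m$-dense in $[0,1]$; fix a nilpotent $Q_n\in\mathcal M_n(\mathbb C)$ with $\|A_n-Q_n\|<\epsilon$. Using that every non-zero projection of $\mathfrak A$ is properly infinite and that every non-zero corner of $\mathfrak A$ is again unital, simple and purely infinite, I can build a system of matrix units $\{f_{ik}\}_{1\le i,k\le n}$ inside $\mathfrak A$, hence a (generally non-unital) $*$-homomorphism $\phi\colon\mathcal M_n(\mathbb C)\to\mathfrak A$; the ``defect'' projection $1-\phi(1)$ is unavoidable in general because $[1_{\mathfrak A}]$ need not be divisible by $n$ in $K_0(\mathfrak A)$, and it is absorbed into a corner carrying the weight $0$. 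Then $\phi(A_n)$ is positive, $\|\phi(A_n)-\phi(Q_n)\|<\epsilon$ with $\phi(Q_n)$ nilpotent in $\mathfrak A$, and $\sigma(\phi(A_n))=\sigma(A_n)\cup\{0\}$ is a $\tfrac1m$-net of $[0,1]$. To upgrade the spectrum to all of $[0,1]$ I would apply a controlled ``gap-filling'' perturbation of norm $\le\tfrac1m$: inside the corner corresponding to each eigenprojection of $A_n$ I insert a suitably rescaled and translated smaller Kato matrix, itself within $O(\epsilon)$ of a nilpotent of small degree, so the resulting $H_\epsilon$ has $\sigma(H_\epsilon)=[0,1]$ and remains within $O(\epsilon)$ of a genuine nilpotent of $\mathfrak A$ (a finite direct sum of nilpotents of bounded degree is nilpotent). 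For (b) I would observe that a positive element with spectrum $[0,1]$ is exactly the image of the identity function under a unital $*$-homomorphism out of $C([0,1])$, which is automatically full since $\mathfrak A$ is simple; such homomorphisms all induce the same invariant ($K_1(C([0,1]))=0$ and $K_0(C([0,1]))=\mathbb Z$ is generated by the unit, so the only datum is $[1_{C([0,1])}]\mapsto[1_{\mathfrak A}]$), and hence they are pairwise approximately unitarily equivalent — either by a uniqueness theorem for full unital $*$-homomorphisms into unital, simple, purely infinite C$^*$-algebras, or by a direct argument built on the existence and equivalence theory of projections, the point being that a connected spectrum attaches no further $K_0$-data to its spectral projections.

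The main obstacle is step (b). One is tempted to transport a single Kato matrix $A_n$ into $\mathfrak A$ so that its image already lies close to $A$, but this cannot be done in general: any matrix-unit system inside $\mathfrak A$ forces the images of the eigenprojections of $A_n$ to have $K_0$-classes that are all multiples of one fixed class $[e]\in K_0(\mathfrak A)$, whereas the spectral projections of $A$ on the subintervals of $[0,1]$ typically have $K_0$-classes that are not multiples of any single class, so $\phi(A_n)$ cannot be norm-close to the spectral decomposition of $A$. An approximate-unitary-equivalence step is therefore essential, and its proof must exploit that a connected spectrum such as $[0,1]$ — unlike a finite net — carries no $K_0$-invariant beyond the class of the unit, so that the obstruction present at each finite stage dissolves in the limit $\sigma(H_\epsilon)=[0,1]$. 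The remaining points — bookkeeping for the $K_0$-defect $1-\phi(1)$, and verifying that the gap-filling perturbation preserves positivity while staying within $O(\epsilon)$ of a (higher- but still finite-degree) nilpotent — are routine once the scheme is set up.
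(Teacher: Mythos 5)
Your reduction to (a) plus (b) is a genuinely different route from the paper's, and it is essentially the strategy of the author's later, ``more complicated'' proof in \cite{Sk2}; but as written both halves have gaps, and the serious one is (b). The paper never proves, and never needs, that two positive elements with spectrum $[0,1]$ are approximately unitarily equivalent. Instead it approximates $A$ by $A_1=\sum_k a_kP_k^{(1)}$ with finite spectrum (real rank zero), and then exploits the one structural fact that makes purely infinite simple C$^*$-algebras easy here: inside each nonzero $P_k^{(1)}$ one can find any prescribed finite number of pairwise orthogonal subprojections equivalent to $P_1^{(1)}$ with sum \emph{strictly less} than $P_k^{(1)}$. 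These subprojections assemble into a matrix algebra carrying a copy of the Kato matrix $T_1$ of norm $a_1$, which is approximated by the image of its companion nilpotent; the orthogonal remainder is a positive element with spectrum $\{a_2,\dots,a_n\}$ in a corner, and one recurses, peeling off the top eigenvalue each time until the remainder is $0$. No $K_0$ classes are ever matched, so no uniqueness theorem is needed. Your observation that a single transported matrix-unit system cannot be norm-close to $A$ because its eigenprojections all have $K_0$-class a multiple of one fixed class is correct, but the paper's recursion dissolves that obstruction without ever confronting it.

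Concretely, the gaps: in (a), inserting finitely many rescaled Kato matrices into the corners still produces an element with \emph{finite} spectrum, so the claimed $\sigma(H_\epsilon)=[0,1]$ fails as described. This is repairable --- fill each spectral gap $[\lambda_i,\lambda_{i+1}]$ with a positive element of norm $\lambda_{i+1}-\lambda_i\le\tfrac1m$ and interval spectrum sitting in the corresponding corner (such elements exist in any nonzero corner, e.g.\ via an embedded UHF subalgebra); since these perturbations are small in norm they need not themselves be near nilpotents, and $dist(H_\epsilon,Nil(\mathfrak A))\le\epsilon+\tfrac1m$ survives. In (b), however, the statement you need is a theorem, not an observation. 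The ``uniqueness theorem for full unital $*$-homomorphisms $C([0,1])\to\mathfrak A$'' is true but imports the Lin--Dadarlat--Phillips classification machinery, which is far heavier than anything in this paper and of comparable depth to the result being proved. The ``direct argument'' you allude to is the actual crux and is missing: one must approximate both elements by positive elements with the same finite $\tfrac1m$-net as spectrum, and then \emph{redistribute} subprojections between adjacent spectral projections (using that every nonzero projection in a unital simple purely infinite C$^*$-algebra contains subprojections of every $K_0$-class, and that $\sum_k[P_k]=[1_{\mathfrak A}]=\sum_k[Q_k]$ forces the last class to match) at a norm cost of $\tfrac1m$ per step, after which Cuntz's theorem gives honest unitary equivalence of the finite-spectrum models. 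Saying that the finite-stage obstruction ``dissolves in the limit'' names the phenomenon but does not prove it; until that redistribution argument is written out, the proposal does not close.
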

\begin{proof}
Clearly (1) implies (2) and (2) implies (3) is trivial by Lemma \ref{cac0}.  We shall demonstrate that (3) implies (1).
\par
Suppose the spectrum of $A$ is connected and contains zero and let $\epsilon > 0$.  Since $\mathfrak{A}$ is a unital, simple, purely infinite C$^*$-algebra, $\mathfrak{A}$ has real rank zero (see {\cite{Zh}} or Theorem V.7.4 in \cite{Da}).  Thus there exists scalars $0 = a_n < a_{n-1} < \ldots < a_1 = \left\|A\right\|$ and non-zero pairwise orthogonal projections $P^{(1)}_1, \ldots, P^{(1)}_n \in \mathfrak{A}$ such that $\left\|A - A_1\right\| \leq \epsilon$ where 
\[
A_1 := \sum^n_{k=1} a_k P^{(1)}_k.
\]
Moreover, since the spectrum of $A$ is connected, we may also assume that 
\[
\max_{1 \leq k \leq n-1} |a_{k+1} - a_{k}| \leq \epsilon
\]
by Lemma \ref{densespectra2}.  The idea behind the remainder of the proof is to systematically remove the largest eigenvalue of $A_1$ by approximating with a nilpotent operator.  
\par
By Lemma \ref{mas} there exists an $\ell \in \mathbb{N}$, a positive matrix $T_1 \in \mathcal{M}_{\ell}(\mathbb{C})$ with $\left\|T_1\right\| = a_1$, and a nilpotent matrix $M_1 \in \mathcal{M}_{\ell}(\mathbb{C})$ such that $\left\|T_1 - M_1\right\| \leq \epsilon$.  In addition, by a small perturbation, we may assume that the geometric multiplicity of the eigenvalue $a_1$ of $T_1$ is one.  For each $k \in \{2, \ldots, n\}$ let 
\[
\left\{\lambda_{1,k},\lambda_{2,k}, \ldots, \lambda_{m^{(1)}_{k},k}\right\}
\]
be the spectrum of $\sigma(T_1)$ contained in $[a_{k}, a_{k-1})$ counting multiplicity (where zero intersection is possible).  Since $\mathfrak{A}$ is a unital, simple, purely infinite C$^*$-algebra, for each $k \in \{2, \ldots, n\}$ there exists pairwise orthogonal projections 
\[
Q^{(1)}_{1,k}, Q^{(1)}_{2,k}, \ldots, Q^{(1)}_{m^{(1)}_{k}, k}
\]
such that $P^{(1)}_1$ is equivalent $Q^{(1)}_{j,k}$ for each $j \in \left\{1, \ldots, m^{(1)}_k\right\}$ and $\sum^{m_{k}}_{j=1} Q^{(1)}_{j,k} < P^{(1)}_{k}$.
\par
For $k \in \{2, \ldots, n\}$ let 
\[
P^{(2)}_k := P^{(1)}_k - \sum^{m^{(1)}_k}_{j=1} Q^{(1)}_{j,k} > 0
\]
(where the empty sum is the zero projection).  Therefore, if
\[
A'_1 := a_1 P^{(1)}_1 + \sum^n_{k=2} a_k \left(\sum^{m^{(1)}_k}_{j=1} Q^{(1)}_{j,k} \right) \,\,\,\,\,\,\,\,\,\,\mbox{ and }\,\,\,\,\,\,\,\,\,
A_2 := \sum^n_{k=2} a_k P^{(2)}_k
\]
then $A'_1$ and $A_2$ are self-adjoint operators such that $A_1 = A'_1 + A_2$.  Notice if $P^{(2)} := \sum^n_{k=2} P^{(2)}_k$ then $P^{(2)}$ is a non-trivial projection such that 
\[
A_2 \in P^{(2)}\mathfrak{A}P^{(2)}\,\,\,\,\,\,\,\,\,\,\mbox{ and }\,\,\,\,\,\,\,\,\,A'_1 \in \left(I_\mathfrak{A} - P^{(2)}\right)\mathfrak{A}\left(I_\mathfrak{A} - P^{(2)}\right).
\]
Thus the proof will be complete if we can demonstrate that $A'_1$ is within $2\epsilon$ of a nilpotent operator from $\left(I_\mathfrak{A} - P^{(2)}\right)\mathfrak{A}\left(I_\mathfrak{A} - P^{(2)}\right)$ and $A_2$ is within $2\epsilon$ of a nilpotent operator from $P^{(2)}\mathfrak{A}P^{(2)}$.
\par
Recall $P^{(2)}\mathfrak{A}P^{(2)}$ and $\left(I_\mathfrak{A} - P^{(2)}\right)\mathfrak{A}\left(I_\mathfrak{A} - P^{(2)}\right)$ are unital, simple, purely infinite C$^*$-algebras. Moreover, if
\[
A''_1 := a_1P^{(1)}_1 + \sum^n_{k=2} \sum^{m^{(1)}_k}_{j=1} \lambda_{j,k} Q^{(1)}_{j,k} \in \left(I_\mathfrak{A} - P^{(2)}\right)\mathfrak{A}\left(I_\mathfrak{A} - P^{(2)}\right)
\]
then $\left\|A''_1 - A'_1\right\| \leq \epsilon$ by the assumption that $\max_{1 \leq k \leq n-1} |a_{k+1} - a_k| \leq \epsilon$.  Since 
\[
\left\{P^{(1)}_1\right\} \cup \left\{\left\{Q^{(1)}_{j,k}\right\}^{m^{(1)}_k}_{j=1}\right\}^n_{k=2}
\]
are pairwise orthogonal, equivalent projections in $\left(I_\mathfrak{A} - P^{(2)}\right)\mathfrak{A}\left(I_\mathfrak{A} - P^{(2)}\right)$, we can use the partial isometries implementing the equivalence to construct a matrix algebra with these projections as the orthogonal minimal projections.  Moreover, by construction, inside this matrix algebra $A''_1$ has the same spectrum as $T_1$ (including multiplicity) so $A''_1$ can be approximated with the analog of $M_1$ inside $\left(I_\mathfrak{A} - P^{(2)}\right)\mathfrak{A}\left(I_\mathfrak{A} - P^{(2)}\right)$.  Hence $A'_1$ is within $2\epsilon$ of a nilpotent operator from $\left(I_\mathfrak{A} - P^{(2)}\right)\mathfrak{A}\left(I_\mathfrak{A} - P^{(2)}\right)$.
\par
To approximate $A_2$ with a nilpotent operator from $P^{(2)}\mathfrak{A}P^{(2)}$, we repeat the same argument with a positive matrix $T_2$ of norm $a_2$.  Due to the nature of the above approximations, the above process gives a non-trivial projection $P^{(3)} < P^{(2)}$ and a positive operator $A_3$ of $P^{(3)}\mathfrak{A}P^{(3)}$ with spectrum $\{a_3, a_4, \ldots, a_n\}$ such that $A_2 - A_3 \in \left(P^{(2)} - P^{(3)}\right)\mathfrak{A}\left(P^{(2)} - P^{(3)}\right)$ can be approximated within $2\epsilon$ of a nilpotent operator from $\left(P^{(2)} - P^{(3)}\right)\mathfrak{A}\left(P^{(2)} - P^{(3)}\right)$.  By repeating this process a finite number of times (eventually ending with a zero operator), we can write $A_1$ as a finite direct sum of positive matrices each within $2\epsilon$ of a nilpotent matrix from the respective matrix algebra.  Hence $A_1$ is within $2\epsilon$ of a nilpotent operator from $\mathfrak{A}$ and thus $A$ is within $3\epsilon$ of a nilpotent operator from $\mathfrak{A}$.
\end{proof}
In order to adapt the proof of Proposition \ref{uspipos} to normal operators, it is necessary to be able to approximate said operators with normal operators with finite spectra.  This difficult work has already been completed by Lin.
\begin{thm}[Theorem 4.4 in \cite{Li2}]
\label{linfa}
Let $\mathfrak{A}$ be a unital, simple, purely infinite C$^*$-algebra and let $N \in Nor(\mathfrak{A})$.  Then $N$ can be approximated by normal operators with finite spectra if and only if $\lambda I_\mathfrak{A} - N \in \mathfrak{A}_0^{-1}$ for all $\lambda\in \mathbb{C} \setminus \sigma(N)$. 
\end{thm}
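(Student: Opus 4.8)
The plan is to handle the two implications separately; the forward direction is soft, and the converse is where all the real work lies (it is the content of Lin's argument in \cite{Li2}). For necessity, suppose $N=\lim_k N_k$ with each $N_k\in Nor(\mathfrak A)$ of finite spectrum, and fix $\lambda\in\mathbb C\setminus\sigma(N)$. Since $\mathfrak A^{-1}$ is open and $\lambda I_{\mathfrak A}-N\in\mathfrak A^{-1}$, we have $\lambda I_{\mathfrak A}-N_k\in\mathfrak A^{-1}$ for all large $k$. Writing $N_k=\sum_{j=1}^{m_k}\mu_j^{(k)}E_j^{(k)}$ with the $E_j^{(k)}$ pairwise orthogonal projections summing to $I_{\mathfrak A}$, we get $\lambda I_{\mathfrak A}-N_k=\sum_j(\lambda-\mu_j^{(k)})E_j^{(k)}$ with every $\lambda-\mu_j^{(k)}\neq 0$; choosing for each $j$ a continuous path $\gamma_j$ in $\mathbb C\setminus\{0\}$ from $1$ to $\lambda-\mu_j^{(k)}$ and setting $t\mapsto\sum_j\gamma_j(t)E_j^{(k)}$ exhibits a path of invertibles from $I_{\mathfrak A}$ to $\lambda I_{\mathfrak A}-N_k$, so $\lambda I_{\mathfrak A}-N_k\in\mathfrak A_0^{-1}$. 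As $\mathfrak A_0^{-1}$ is the connected component of $I_{\mathfrak A}$ in the topological group $\mathfrak A^{-1}$, and $\mathfrak A^{-1}$ (being open in a Banach space) is locally path connected, $\mathfrak A_0^{-1}$ is clopen in $\mathfrak A^{-1}$; since $\lambda I_{\mathfrak A}-N_k\to\lambda I_{\mathfrak A}-N$ inside the open set $\mathfrak A^{-1}$, the limit lies in $\mathfrak A_0^{-1}$.

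For sufficiency, first reformulate the hypothesis: $\lambda\mapsto\lambda I_{\mathfrak A}-N$ is a continuous map $\mathbb C\setminus\sigma(N)\to\mathfrak A^{-1}$, constant on components, taking values in $\mathfrak A_0^{-1}$; on the unbounded component this is automatic since $\tfrac1\lambda(\lambda I_{\mathfrak A}-N)\to I_{\mathfrak A}$ as $\lambda\to\infty$, so the condition says exactly that for each bounded component $U$ of $\mathbb C\setminus\sigma(N)$ and $\lambda_0\in U$ the index class $[\lambda_0 I_{\mathfrak A}-N]$ vanishes in $K_1(\mathfrak A)$ (the standard obstruction to $N$ being a limit of finite-spectrum normals). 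Now fix $\epsilon>0$. Using that $\mathfrak A$ has real rank zero (\cite{Zh}, or Theorem V.7.4 in \cite{Da}) and is unital, simple and purely infinite, the goal is to produce pairwise orthogonal projections $P_1,\dots,P_m\in\mathfrak A$ with $\sum_iP_i=I_{\mathfrak A}$, associated to a grid of half-open squares $D_1,\dots,D_m$ of side $<\epsilon$ covering a neighbourhood of $\sigma(N)$, such that each $\|P_iN-NP_i\|$ is small and each $P_iNP_i$ has spectrum inside $\overline{D_i}$ — i.e. the $P_i$ are approximate spectral projections of $N$ for the $D_i$. Given these, $N':=\sum_i c_iP_i$, with $c_i$ the centre of $D_i$, is a normal operator with finite spectrum and $\|N-N'\|=O(\epsilon)$, which finishes the argument since $\epsilon$ was arbitrary.

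The projections would be constructed by cutting $\sigma(N)$ along the grid lines one at a time: cutting along a line $\ell$ means finding a projection $P$ with $\|PN-NP\|$ small such that the part of $N$ supported on $P$ has spectrum on one side of $\ell$ and the part on $I_{\mathfrak A}-P$ on the other. This is a Basic-Homotopy-Lemma type statement, available precisely when the $K_1$-index of $\lambda I_{\mathfrak A}-N$ along $\ell$ is trivial — guaranteed by the hypothesis wherever $\ell$ misses $\sigma(N)$, while on $\ell\cap\sigma(N)$ one first perturbs $N$ (using real rank zero) to push its spectrum off $\ell$; iterating over all grid lines, with enough care that the accumulated commutator errors stay $O(\epsilon)$, yields the required honest orthogonal projections, the passage from approximately-commuting functional calculus to genuine projections again using real rank zero and perturbation theory for almost-commuting projection/self-adjoint pairs, and leaning throughout on the structure of unital, simple, purely infinite C$^*$-algebras ($K_1$-injectivity, properly infinite projections with comparison governed by $K_0$). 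The main obstacle is exactly this cutting/homotopy lemma together with the norm bookkeeping in its iteration — one must know a cut can always be performed once the index along the cutting line vanishes, and that after finitely many cuts the commutators $\|P_iN-NP_i\|$ remain small; this is the genuinely technical heart, carried out in \cite{Li2}, and I would not expect a short self-contained proof of it, everything else being routine once it is in hand.
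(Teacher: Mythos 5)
This theorem is imported into the paper as a black box --- it is Theorem 4.4 of Lin's paper \cite{Li2}, cited without proof --- so there is no in-paper argument to compare yours against. Your treatment of the necessity direction is complete and correct: the path $t\mapsto\sum_j\gamma_j(t)E_j^{(k)}$ shows each $\lambda I_{\mathfrak A}-N_k$ lies in $\mathfrak A_0^{-1}$, and the observation that $\mathfrak A_0^{-1}$ is clopen in the locally path-connected open set $\mathfrak A^{-1}$ passes this to the limit. This is the same clopen-component mechanism the paper itself uses in its Lemma on $\overline{QuasiNil(\mathfrak A)}$ (Lemma \ref{concomofid}), and nothing is missing from it.

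The sufficiency direction, however, is not a proof. You have correctly reformulated the hypothesis as the vanishing of the $K_1$-valued index of $\lambda I_{\mathfrak A}-N$ on each bounded component of $\mathbb C\setminus\sigma(N)$ (using that $\mathfrak A^{-1}/\mathfrak A_0^{-1}\cong K_1(\mathfrak A)$ for unital, simple, purely infinite $\mathfrak A$), and your grid-of-squares strategy --- approximate spectral projections $P_i$ with $\|P_iN-NP_i\|$ small and $\sigma(P_iNP_i)\subseteq\overline{D_i}$, followed by replacing $P_iNP_i$ with $c_iP_i$ --- is indeed the shape of Lin's argument. But the entire content of the theorem is concentrated in the step you flag and then defer: that a ``cut'' along a grid line can be performed with controlled commutator error whenever the index along that line vanishes, and that the errors remain $O(\epsilon)$ after iterating over all lines. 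Nothing in your write-up establishes this; real rank zero alone does not produce it (real rank zero gives approximation of self-adjoints by finite-spectrum self-adjoints, but a normal element is a pair of non-commuting self-adjoints, and the obstruction to simultaneously quantizing both is exactly the index condition). Since you explicitly acknowledge that this is the technical heart carried out in \cite{Li2} and do not attempt it, the honest assessment is that you have proved the easy implication and given an accurate roadmap --- but not a proof --- of the hard one. For the purposes of this paper that is acceptable, since the author also treats the result as external; but it should be presented as a citation with a correctness check of the stated necessity, not as a proof of the theorem.
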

It turns out that the condition `$\lambda I_\mathfrak{A} - N \in \mathfrak{A}_0^{-1}$ for all $\lambda\in \mathbb{C} \setminus \sigma(N)$' is a necessary condition for an operator to be a limit of nilpotent operators.
\begin{lem}
\label{concomofid}
Let $\mathfrak{A}$ be a unital C$^*$-algebra and let $T \in \overline{QuasiNil(\mathfrak{A})}$.  Then $\lambda I_\mathfrak{A} - T \in \mathfrak{A}^{-1}_0$ for all $\lambda\in \mathbb{C} \setminus \sigma(T)$.
\end{lem}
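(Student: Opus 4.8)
The plan is to reduce the statement to a single easy observation: for a quasinilpotent $S$ and a nonzero scalar $\lambda$, the whole segment $\{\lambda I_\mathfrak{A} - tS : t \in [0,1]\}$ lies in $\mathfrak{A}^{-1}$, so $\lambda I_\mathfrak{A} - S$ is joined to $\lambda I_\mathfrak{A}$ (hence to $I_\mathfrak{A}$) by a path of invertibles. One first makes a harmless reduction: I may assume $\mathfrak{A} \neq \{0\}$, since otherwise there is nothing to prove; and since $T \in \overline{QuasiNil(\mathfrak{A})}$, Lemma \ref{cac0} gives $0 \in \sigma(T)$, so every $\lambda \in \mathbb{C}\setminus\sigma(T)$ is automatically nonzero. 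Fix such a $\lambda$.

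Next I would record the key observation. For any $S \in QuasiNil(\mathfrak{A})$ and any $t \in [0,1]$ one has $\sigma(tS) = t\,\sigma(S) = \{0\}$, so $tS$ is again quasinilpotent and $\lambda I_\mathfrak{A} - tS \in \mathfrak{A}^{-1}$. Thus $t \mapsto \lambda I_\mathfrak{A} - tS$ is a continuous path in $\mathfrak{A}^{-1}$ from $\lambda I_\mathfrak{A}$ to $\lambda I_\mathfrak{A} - S$. Since $\lambda \neq 0$ we have $\lambda I_\mathfrak{A} = \exp\bigl((\log\lambda) I_\mathfrak{A}\bigr) \in \mathfrak{A}^{-1}_0$ for any choice of logarithm, and therefore $\lambda I_\mathfrak{A} - S \in \mathfrak{A}^{-1}_0$ for \emph{every} $S \in QuasiNil(\mathfrak{A})$.

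Finally I would pass to the limit. Choose $T_n \in QuasiNil(\mathfrak{A})$ with $T_n \to T$. Because $\lambda I_\mathfrak{A} - T \in \mathfrak{A}^{-1}$, for all sufficiently large $n$ we have $\left\|(\lambda I_\mathfrak{A} - T) - (\lambda I_\mathfrak{A} - T_n)\right\| = \left\|T - T_n\right\| < \left\|(\lambda I_\mathfrak{A} - T)^{-1}\right\|^{-1}$; a Neumann-series estimate then shows the segment $s \mapsto (1-s)(\lambda I_\mathfrak{A} - T) + s(\lambda I_\mathfrak{A} - T_n)$, $s \in [0,1]$, stays inside $\mathfrak{A}^{-1}$. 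Hence $\lambda I_\mathfrak{A} - T$ lies in the same connected component of $\mathfrak{A}^{-1}$ as $\lambda I_\mathfrak{A} - T_n$, and by the previous paragraph that component is $\mathfrak{A}^{-1}_0$, as desired.

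I do not anticipate a genuine obstacle: the whole argument is the openness of $\mathfrak{A}^{-1}$ together with the fact that, being open in a Banach space, $\mathfrak{A}^{-1}$ is locally path-connected, so its connected component of the identity is the path-component of the identity. The only point requiring any care is the reduction to $\lambda \neq 0$ in the first paragraph, without which the path $t \mapsto \lambda I_\mathfrak{A} - tS$ could leave $\mathfrak{A}^{-1}$ and the whole scheme would collapse.
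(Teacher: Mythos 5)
Your proposal is correct and follows essentially the same route as the paper: the segment $t \mapsto \lambda I_{\mathfrak{A}} - tS$ shows $\lambda I_{\mathfrak{A}} - S \in \mathfrak{A}^{-1}_0$ for every quasinilpotent $S$ and $\lambda \neq 0$, the reduction to $\lambda \neq 0$ comes from Lemma \ref{cac0}, and one passes to the limit using that $\mathfrak{A}^{-1}_0$ is clopen in $\mathfrak{A}^{-1}$. Your Neumann-series argument merely spells out explicitly what the paper invokes as ``$\mathfrak{A}^{-1}_0$ is closed in the relative topology on $\mathfrak{A}^{-1}$.''
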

\begin{proof}
If $M \in QuasiNil(\mathfrak{A})$ then $\lambda I_\mathfrak{A} - tM$ is invertible for all $\lambda \in \mathbb{C}\setminus \{0\}$ and for all $t \in \mathbb{C}$.  Therefore $\lambda I_\mathfrak{A} - M \in \mathfrak{A}^{-1}_0$ for all $\lambda \in \mathbb{C} \setminus\{0\}$. 
\par
If $T \in \overline{QuasiNil(\mathfrak{A})}$ then $0 \in \sigma(T)$ by Lemma \ref{cac0}.  As $\mathfrak{A}^{-1}_0$ is closed in the relative topology on $\mathfrak{A}^{-1}$, $\lambda I_\mathfrak{A} - T \in \mathfrak{A}_0^{-1}$ for all $\lambda\in \mathbb{C} \setminus \sigma(T)$.  
\end{proof}
With Lemma \ref{concomofid} giving another necessary condition for a normal operator to be a limit of nilpotent operators, we can now address our main theorem.
\begin{thm}
\label{uspimain}
Let $\mathfrak{A}$ be a unital, simple, purely infinite C$^*$-algebra and let $N \in Nor(\mathfrak{A})$.  Then the following are equivalent:
\begin{enumerate}
	\item $N \in \overline{Nil(\mathfrak{A})}$.
	\item $N \in \overline{QuasiNil(\mathfrak{A})}$.
	\item $0 \in \sigma(N)$, $\sigma(N)$ is connected, and $\lambda I_\mathfrak{A} - N \in \mathfrak{A}_0^{-1}$ for all $\lambda\in \mathbb{C} \setminus \sigma(N)$.
\end{enumerate}
\end{thm}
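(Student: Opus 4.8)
The plan is to establish the cyclic chain of implications $(1)\Rightarrow(2)\Rightarrow(3)\Rightarrow(1)$. The implication $(1)\Rightarrow(2)$ is immediate since every nilpotent operator is quasinilpotent. The implication $(2)\Rightarrow(3)$ is also essentially free: if $N\in\overline{QuasiNil(\mathfrak{A})}$ then $0\in\sigma(N)$ and $\sigma(N)$ is connected by Lemma \ref{cac0}, while $\lambda I_\mathfrak{A}-N\in\mathfrak{A}_0^{-1}$ for all $\lambda\in\mathbb{C}\setminus\sigma(N)$ by Lemma \ref{concomofid}. So the entire content of the theorem lies in $(3)\Rightarrow(1)$.

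For $(3)\Rightarrow(1)$, I would follow the strategy of Proposition \ref{uspipos}, replacing the role of the real-rank-zero approximation (which gave positive operators with finite spectrum) by Theorem \ref{linfa} of Lin: the hypothesis $\lambda I_\mathfrak{A}-N\in\mathfrak{A}_0^{-1}$ for all $\lambda\notin\sigma(N)$ is exactly what is needed to approximate $N$, within $\epsilon$, by a normal operator $N_1=\sum_{k=1}^n \lambda_k P_k^{(1)}$ with finite spectrum and pairwise orthogonal non-zero projections $P_k^{(1)}$ summing to $I_\mathfrak{A}$. Using connectedness of $\sigma(N)$ together with Lemma \ref{densespectra2}, I can choose the eigenvalues $\lambda_k$ to form an $\epsilon$-net in $\sigma(N)$ in the sense that they can be ordered along a path so that consecutive ones are within $\epsilon$, and (after relabelling) with $\lambda_1$ an eigenvalue of largest modulus, say, and $0\in\sigma(N)$ guaranteeing some $\lambda_k$ near $0$. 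The key structural input is then a complex analogue of Lemma \ref{mas}: since the scalar matrices $A_n=B_n^2$ in Lemma \ref{mas} have spectrum asymptotically dense in $[0,1]$ (Corollary \ref{densespectra}), one can in fact produce, for any compact connected $K\subseteq\mathbb{C}$ containing $0$ with $K$ contained in a disc of radius $\|N\|$, a sequence of normal (indeed, the argument only needs diagonalizable) matrices $T_m$ with $\sigma(T_m)$ becoming dense in $K$ and $dist(T_m,Nil(\mathcal{M}_{\dim}(\mathbb{C})))\to 0$ — this is the standard Apostol–Foiaş–Voiculescu-type fact underlying Theorem \ref{BHnormchar}, and here $K=\sigma(N)$.

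The engine of the argument is then the ``peeling'' procedure of Proposition \ref{uspipos}: pick $T_1$ a matrix whose spectrum $\epsilon$-approximates $\sigma(N)$ and which is within $\epsilon$ of a nilpotent $M_1$; arrange (by a small perturbation) that $\lambda_1$ has geometric multiplicity one in $T_1$; for each other cluster value $\lambda_k$ use that $\mathfrak{A}$ is unital, simple, purely infinite to find, inside $P_k^{(1)}\mathfrak{A}P_k^{(1)}$, pairwise orthogonal projections $Q_{j,k}^{(1)}$ each equivalent to $P_1^{(1)}$ with $\sum_j Q_{j,k}^{(1)}<P_k^{(1)}$, realizing the eigenvalues of $T_1$ in $[\,\cdot\,]$; split $N_1=A_1'+A_2$ where $A_1'$ lives in $(I-P^{(2)})\mathfrak{A}(I-P^{(2)})$ and, via the implementing partial isometries, sits inside a matrix subalgebra with the same spectral data as $T_1$, hence is within $2\epsilon$ of a nilpotent there; while $A_2$ is a normal operator with strictly fewer spectral values in a corner $P^{(2)}\mathfrak{A}P^{(2)}$, again a unital, simple, purely infinite C$^*$-algebra, so the argument recurses. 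After finitely many steps $A_N=0$ and assembling the direct sum shows $N_1$, hence $N$, is within a fixed multiple of $\epsilon$ of $Nil(\mathfrak{A})$.

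The main obstacle, and the place where care is genuinely needed beyond the positive case, is bookkeeping the complex spectral data through the recursion: one must ensure that the ``leftover'' operator $A_2$ at each stage is still normal, still has all its eigenvalues forming a set that can be $\epsilon$-chained (so that Lin's theorem and the matrix model apply again in the corner), and that $0$ remains available so the process terminates correctly — in the self-adjoint case ``removing the largest eigenvalue'' makes this transparent, whereas for general $\sigma(N)\subseteq\mathbb{C}$ one must instead remove eigenvalues along the chosen path in $\sigma(N)$ and verify connectedness of the residual spectral set is not actually required, only the chaining is. I also expect to need to check that the matrix model $T_m$ can be taken with all eigenvalues of geometric multiplicity one (so they can each be matched to a single copy of $P_1^{(1)}$), which follows from a generic small perturbation of the Apostol–Foiaş–Voiculescu construction. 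Granting these points, the proof of $(3)\Rightarrow(1)$ is a direct normal-operator elaboration of Proposition \ref{uspipos}, and the theorem follows.
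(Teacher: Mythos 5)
Your overall strategy is the paper's: $(1)\Rightarrow(2)\Rightarrow(3)$ are handled exactly as you say via Lemmas \ref{cac0} and \ref{concomofid}, and for $(3)\Rightarrow(1)$ the paper likewise combines Lin's Theorem \ref{linfa} with the peeling recursion of Proposition \ref{uspipos}, building the complex matrix model out of Lemma \ref{mas} by pushing the positive matrices forward under a polynomial vanishing at $0$ that uniformly approximates a path in (a neighbourhood of) $\sigma(N)$ from $0$ to the target eigenvalue. The one step you leave genuinely unresolved is the selection rule for which spectral value to peel off at each stage, and your first suggestion --- an eigenvalue of largest modulus --- fails in general: the largest-modulus point can be a cut vertex of the $\epsilon$-chain graph on the finite spectrum (take a spectrum that snakes outward and then back inward toward the origin), and removing it strands a cluster of remaining eigenvalues with no $\epsilon$-chain to $0$, after which no matrix model passing through all residual eigenvalues via a path from $0$ exists and a Lemma \ref{distrestr}-type obstruction prevents approximating the stranded piece by nilpotents in its corner. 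Contrary to your closing remark, preserving connectedness (equivalently, $\epsilon$-chainability to $0$) of the residual spectrum is precisely what the recursion requires.

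The paper's device for this is worth recording: it discretizes $\sigma(N_\epsilon)$ into $\epsilon$-boxes $B_{n,m}$, declares a relevant box \emph{bad} if it equals $B_{0,0}$ or if its removal disconnects the union of relevant boxes, and uses the elementary graph-theoretic fact that a finite connected graph with at least two vertices has at least two non-cut vertices to guarantee that a \emph{good} box always exists. Removing a good box keeps the residual union of relevant boxes connected and containing $B_{0,0}$, so the recursion terminates after finitely many steps. With that selection rule substituted for ``largest modulus,'' your argument is the paper's; the remaining points you flag (geometric multiplicity one at the peeled eigenvalue, normality of the leftover) are handled by small perturbations exactly as you anticipate.
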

\begin{proof}
Clearly (1) implies (2) and (2) implies (3) is trivial by Lemma \ref{cac0} and Lemma \ref{concomofid}.  We shall demonstrate that (3) implies (1).  As the approximations contained in the proof are identical to those used in Proposition \ref{uspipos}, we will only outline the main technique and omit the approximations.
\par
Suppose $0 \in \sigma(N)$, $\sigma(N)$ is connected, and $\lambda I_\mathfrak{A} - N \in \mathfrak{A}_0^{-1}$ for all $\lambda\in \mathbb{C} \setminus \sigma(N)$.  Fix $\epsilon > 0$ and for each $(n,m) \in \mathbb{Z}^2$ let 
\[
B_{n,m} := \left(\epsilon n - \frac{\epsilon}{2}, \epsilon n + \frac{\epsilon}{2} \right]  + \mathrm{i} \left( \epsilon m - \frac{\epsilon}{2}, \epsilon m + \frac{\epsilon}{2} \right] \subseteq \mathbb{C}.
\]
\par
By Theorem \ref{linfa} there exists a normal operator $N_\epsilon$ with finite spectrum such that $\left\|N - N_\epsilon \right\| \leq \epsilon$.  For each $(n,m) \in \mathbb{Z}^2$ we label the box $B_{n,m}$ relevant if $\sigma(N_\epsilon) \cap B_{n,m} \neq \emptyset$ and we label the box $B_{n,m}$ irrelevant if $\sigma(N_\epsilon) \cap B_{n,m} = \emptyset$.  Since $\sigma(N)$ is connected, we may assume (via Lemma \ref{densespectra2}) that the union of all relevant $B_{n,m}$ is a connected set and $B_{0,0}$ is relevant.  By a perturbation of at most $\epsilon$, we can assume that $\sigma(N_\epsilon)$ is precisely the centres of all relevant boxes and $\left\|N - N_\epsilon\right\| \leq 2\epsilon$.
\par
The remainder of the proof is similar in nature to the proof of Proposition \ref{uspipos} in that we will use a recursive algorithm to write $N_\epsilon$ as a finite direct sum of matrices inside of $\mathfrak{A}$ each of which is within $5\epsilon$ of the set of nilpotent matrices.  If the only relevant box is $B_{0,0}$, the algorithm may stop as $N_\epsilon$ is the zero operator and thus nilpotent.  Otherwise we label a relevant box bad if its removal disconnects the union of the relevant boxes or it is $B_{0,0}$ and we label a relevant box good if it is not bad.  Elementary graph theory implies that at least one box is good.
\par
Let $B_{n_0, m_0}$ be a good, relevant box.  Since the union of the relevant boxes is connected, there exists a continuous path $\gamma : [0,1] \to \mathbb{C}$ that connects 0 to $\epsilon n_0 + \mathrm{i}\epsilon m_0$ whose image lies in the union of the relevant boxes.  By Lemma \ref{mas} and since $\gamma$ can be approximated uniformly by a polynomial that vanishes at zero, there exists an $\ell \in \mathbb{N}$, a normal operator $N_\ell \in \mathcal{M}_\ell(\mathbb{C})$, and a nilpotent $M_\ell \in \mathcal{M}_\ell(\mathbb{C})$ such that the spectrum of $N_\ell$ is contained within an $\epsilon$-neighbourhood of the union of relevant boxes and $\left\|N_\ell - M_\ell\right\| \leq \epsilon$.  By perturbing the eigenvalues of $N_\ell$ by at most $4\epsilon$, we can assume that the spectrum of $N_\ell$ is precisely a subset of the centres of relevant boxes, the multiplicity of $\epsilon n_0 +\mathrm{i} \epsilon m_0$ is precisely one, and $\left\|M_\ell - N_\ell\right\| \leq 5\epsilon$.
\par
For each $(n,m) \in\mathbb{Z}^2$ let $P_{n,m}$ be the spectral projection of $N_\epsilon$ for the box $B_{n,m}$.  Using $P_{n_0, m_0}$ as a main projection, for each other $(n,m) \in \mathbb{Z}^2$ such that $\epsilon n +\mathrm{i}\epsilon m$ is in the spectrum of $N_\ell$ we can find the algebraic multiplicity of the eigenvalue $\epsilon n +\mathrm{i}\epsilon m$ of $N_\ell$ many orthogonal subprojections of $P_{n,m}$ whose sum is strictly less then $P_{n,m}$ and each of which is equivalent to $P_{n_0, m_0}$.  Thus, as in the proof of Proposition \ref{uspipos}, we can find a projection $P_1 \in \mathfrak{A}$ such that $P_1$ commutes with $N_\epsilon$, $P_1N_\epsilon P_1$ can be approximated by a nilpotent operator from $P_1\mathfrak{A}P_1$ within $5\epsilon$, and $(I-P_1)N_\epsilon(I-P_1)$ has the same spectrum as $N_\epsilon$ minus $\epsilon n_0 + \mathrm{i}\epsilon m_0$.
\par
By our selection of $(n_0, m_0)$ and choice of projection $P_1$, the number of relevant $B_{n,m}$ for $(I-P_1)N_\epsilon(I-P_1)$ is one less than the number of relevant $B_{n,m}$ for $N_\epsilon$ and the union of the relevant $B_{n,m}$ for $(I-P_1)N_\epsilon(I-P_1)$ is connected and contains $B_{0,0}$.  Thus, by repeating the above process a finite number of times, we obtain a nilpotent operator $M \in \mathfrak{A}$ such that $\left\|N - M\right\| \leq 7\epsilon$.  Hence the result follows. 
\end{proof}
In the case of our C$^*$-algebra is not a purely infinite C$^*$-algebra, we note that the following can easily be proved using the techniques illustrated above.
\begin{lem}
Let $\mathfrak{A}$ be a unital, simple C$^*$-algebra and let $N \in Nor(\mathfrak{A})$ be such that $\sigma(N)$ is connected and contains zero.  If $N = \lim_{n\to\infty} \sum^{m_n}_{k=1} a_{k,n} P_{k,n}$ where $a_{k,n} \in \mathbb{C}$ and $P_{k,n}$ are infinite projections with $\sum^{m_n}_{k=1} P_{k,n} = I_\mathfrak{A}$ then $N \in \overline{Nil(\mathfrak{A})}$.
\end{lem}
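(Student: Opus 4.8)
The plan is to observe that the recursive construction proving Proposition~\ref{uspipos} and Theorem~\ref{uspimain} never uses pure infiniteness directly: it uses only that $\mathfrak{A}$ is unital and simple, that $N$ can be approximated by normal operators with finite spectrum, and that the spectral projections of those approximants --- together with every subprojection produced along the way --- are properly infinite, so that inside a box-projection $P$ one may split off arbitrarily many pairwise orthogonal copies of a fixed ``main'' projection while leaving an infinite remainder. In our situation the finite-spectrum approximants are handed to us, since $N_n := \sum_{k=1}^{m_n} a_{k,n} P_{k,n}$ has finite spectrum and $N = \lim_n N_n$; so it remains only to supply the comparison data.

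First I would record the comparison input. By Cuntz's theorem, in a simple C$^*$-algebra every infinite projection is properly infinite; in particular each $P_{k,n}$ is properly infinite. I then claim: if $\mathfrak{A}$ is unital and simple, $P \in \mathfrak{A}$ is a properly infinite projection, $Q \in \mathfrak{A}$ is any nonzero projection, and $m \in \mathbb{N}$, then there exist pairwise orthogonal projections $R_1, \ldots, R_m, P'$ all $\leq P$ with $R_1 \sim \cdots \sim R_m \sim Q$ and $P' \sim P$; in particular $P - (R_1 + \cdots + R_m)$ dominates $P'$ and so is again properly infinite. To see this, note that by simplicity the closed two-sided ideal generated by $P$ is $\mathfrak{A}$, so $I_\mathfrak{A}$ --- hence also $Q$ --- is equivalent to a subprojection of an orthogonal sum of finitely many copies of $P$; since $P$ is properly infinite, such a finite orthogonal sum, and indeed that sum together with one extra copy of $P$, is itself equivalent to a subprojection of $P$. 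Cutting the implementing partial isometry along this decomposition produces the desired $R_1, \ldots, R_m$ and $P'$. Finally, a corner $e\mathfrak{A}e$ of a unital simple C$^*$-algebra is again unital and simple, and a projection dominated by $e$ which is properly infinite in $\mathfrak{A}$ remains properly infinite in $e\mathfrak{A}e$ because the relevant partial isometries already lie in $e\mathfrak{A}e$. Hence the ``properly infinite'' property of the working projections persists through the recursion.

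With this in hand I would run the proof of Theorem~\ref{uspimain} essentially verbatim. Fix $\epsilon > 0$ and use the boxes $B_{n,m}$. For $n$ large, using that $\sigma(N)$ is connected and contains $0$ together with Lemma~\ref{densespectra2}, perturb $N_n$ so that its spectrum is a subset of the centres of a connected family of relevant boxes containing $B_{0,0}$. Repeatedly select a good relevant box, use Lemma~\ref{mas} and polynomial approximation of a path from $0$ to that box to produce a finite-dimensional normal operator within $5\epsilon$ of a nilpotent matrix, realise it in a matrix subalgebra built from a main projection $P_{n_0,m_0}$ together with copies split off from the other box-projections via the comparison input, and pass to the corner cut down by the complementary projection --- which is again unital, simple, and carries box-projections that are properly infinite by the remarks above. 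After finitely many steps the spectrum of the surviving summand is $\{0\}$, and $N_n$ has been exhibited within $7\epsilon$ of a finite direct sum of nilpotent matrices, hence within $7\epsilon$ of $Nil(\mathfrak{A})$. Letting $\epsilon \to 0$ and $n \to \infty$ yields $N \in \overline{Nil(\mathfrak{A})}$; the trivial case in which the perturbed $N_n$ is scalar, forcing $N = 0$, is immediate.

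The crux is the comparison input of the second paragraph: in a merely simple (not purely infinite) C$^*$-algebra one must justify absorbing many copies of an arbitrary nonzero projection into a properly infinite one and, more delicately, arrange that the leftover projection stays infinite so that the recursion does not stall --- this is exactly where Cuntz's implication ``infinite $\Rightarrow$ properly infinite in a simple C$^*$-algebra'' and the ideal-theoretic subequivalence after amplification enter. Everything else, including the graph-theoretic choice of good boxes and the bookkeeping of the constants, is a transcription of the arguments already carried out for Proposition~\ref{uspipos} and Theorem~\ref{uspimain}.
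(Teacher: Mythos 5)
Your argument is correct and is essentially the paper's own proof: the paper likewise reduces the lemma to the recursion of Theorem \ref{uspimain} by invoking that in a simple C$^*$-algebra infinite projections are properly infinite and every projection is subequivalent to any infinite projection (Theorem V.5.1 and Lemma V.5.4 in \cite{Da}), together with the same technical observation that the leftover projections remain infinite at each stage. Your second paragraph simply spells out, via the ideal-generation and amplification argument, the comparison facts the paper cites from \cite{Da}.
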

\begin{proof}
The conditions that $\mathfrak{A}$ is simple and the projections are infinite imply that the projections are properly infinite (see Theorem V.5.1 in \cite{Da}) and every projection is equivalent to a subprojection of any infinite projection (see Lemma V.5.4 in \cite{Da}).  Thus the process used above works (where we note the small technical detail that, when removing one projection from the sum, we can still take the differences containing the other projections to be infinite by showing that they containing a strict subprojection equivalent to the original $P_{k,n}$ by Theorem V.5.1 in \cite{Da}).  
\end{proof}
With the proof of Theorem \ref{uspimain} complete, we turn our attention to other interesting questions pertaining to limits of nilpotent operators in unital, simple, purely infinite C$^*$-algebras.  To begin, we recall that Corollary 6 in \cite{He2} shows that the closure of $Nil(\mathcal{B}(\mathcal{H}))+Nil(\mathcal{B}(\mathcal{H}))$ contained every normal operator.  We now demonstrate a similar result for unital, simple, purely infinite C$^*$-algebras.
\begin{thm}
\label{uspisum}
Let $\mathfrak{A}$ be a unital, simple, purely infinite C$^*$-algebra.  Then 
\[
\mathfrak{A}_{sa} \subseteq \overline{\{M_1 + M_2 \, \mid \, M_1, M_2 \in Nil(\mathfrak{A})\}}
\]
and 
\[
\mathfrak{A} \subseteq \overline{\{M_1 + M_2 + M_3 + M_4\, \mid \, M_1, M_2, M_3, M_4 \in Nil(\mathfrak{A})\}}.
\]
\end{thm}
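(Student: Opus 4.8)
The plan is to deduce both inclusions from Theorem \ref{uspimain}. The key point is that it suffices to express every self-adjoint element of $\mathfrak{A}$ as a norm limit of sums of two elements of $\overline{Nil(\mathfrak{A})}$; such a decomposition will be produced by splitting a finite-spectrum approximant of the given element using ``diffuse'' positive elements (those with spectrum $[0,1]$) sitting in the corners of $\mathfrak{A}$.

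First I would record two easy reductions. Since $\overline{Nil(\mathfrak{A})}$ is norm closed and $\lambda M \in Nil(\mathfrak{A})$ whenever $M \in Nil(\mathfrak{A})$ and $\lambda \in \mathbb{C}$, one has $\overline{Nil(\mathfrak{A})} + \overline{Nil(\mathfrak{A})} \subseteq \overline{Nil(\mathfrak{A}) + Nil(\mathfrak{A})}$; hence the first inclusion follows once we know every $A \in \mathfrak{A}_{sa}$ lies in $\overline{\overline{Nil(\mathfrak{A})} + \overline{Nil(\mathfrak{A})}}$. Granting the first inclusion, the second is immediate: writing $T = H + \mathrm{i}K$ with $H, K \in \mathfrak{A}_{sa}$, approximating $H$ by $M_1 + M_2$ and $K$ by $M_3 + M_4$ with all $M_j \in Nil(\mathfrak{A})$, and noting that $\mathrm{i}M_3$ and $\mathrm{i}M_4$ are nilpotent, expresses $T$ as a norm limit of sums of four nilpotents.

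Now fix $A \in \mathfrak{A}_{sa}$ and $\epsilon > 0$. As $\mathfrak{A}$ has real rank zero, pick $A' = \sum_{k=1}^n a_k P_k$ with $a_k \in \mathbb{R}$ and $P_1, \dots, P_n$ pairwise orthogonal nonzero projections summing to $I_\mathfrak{A}$, with $\|A - A'\| < \epsilon$. Each corner $P_k\mathfrak{A}P_k$ is again a unital, simple, purely infinite C$^*$-algebra, hence is properly infinite and so contains a non-unitary isometry $v_k$; by Coburn's theorem $C^*(v_k) \cong \mathcal{T}$ (the Toeplitz algebra), and a computation in $\mathcal{T}$ shows $\sigma(v_k + v_k^*) = [-2,2]$, so by spectral permanence $k_k := \tfrac{1}{2}(P_k + \mathrm{Re}\,v_k)$ is a positive element of $P_k\mathfrak{A}P_k$ with spectrum $[0,1]$ there. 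Set
\[
D_1 := \sum_{k=1}^n a_k k_k, \qquad D_2 := A' - D_1 = \sum_{k=1}^n a_k(P_k - k_k),
\]
which are self-adjoint. Relative to the decomposition $I_\mathfrak{A} = \sum_k P_k$ the elements $D_1, D_2$ are block diagonal with $k$-th blocks $a_kk_k$ and $a_k(P_k - k_k)$, whose spectra in $P_k\mathfrak{A}P_k$ both equal $a_k[0,1]$; hence
\[
\sigma_\mathfrak{A}(D_1) = \sigma_\mathfrak{A}(D_2) = \bigcup_{k=1}^n a_k[0,1],
\]
which is a closed interval of $\mathbb{R}$ containing $0$ (the intervals $a_k[0,1]$ all contain $0$). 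Finally, for $\lambda \notin \sigma(D_i)$ the set $\{\mu I_\mathfrak{A} - D_i : \mu \notin \sigma(D_i)\}$ is a connected subset of $\mathfrak{A}^{-1}$ (as $\mathbb{C} \setminus \sigma(D_i)$ is connected) containing the positive invertible $\mu_0 I_\mathfrak{A} - D_i$ for $\mu_0 > \|D_i\|$, so it lies in $\mathfrak{A}_0^{-1}$; thus $\lambda I_\mathfrak{A} - D_i \in \mathfrak{A}_0^{-1}$. By Theorem \ref{uspimain}, $D_1, D_2 \in \overline{Nil(\mathfrak{A})}$, and $\|A - (D_1 + D_2)\| = \|A - A'\| < \epsilon$. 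Since $\epsilon > 0$ was arbitrary, $A \in \overline{Nil(\mathfrak{A}) + Nil(\mathfrak{A})}$.

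I expect the construction of $D_1$ and $D_2$ to be the heart of the matter, and the obstacle is to find the right splitting: the naive choice of the positive and negative parts $A'_+$ and $A'_-$ fails, since these have finite (hence typically disconnected) spectrum that need not contain $0$, so Theorem \ref{uspimain} does not apply. Replacing each spectral block $a_kP_k$ by the non-projection element $a_kk_k$ cures both defects at once, because $a_kk_k$ has spectrum the \emph{interval} $a_k[0,1] \ni 0$. The only ingredient outside Theorem \ref{uspimain} is the existence of the diffuse elements $k_k$, i.e.\ the fact that a unital, simple, purely infinite C$^*$-algebra (or any corner of it) contains a positive element with spectrum $[0,1]$; this I would obtain from proper infiniteness together with Coburn's description of the C$^*$-algebra generated by a non-unitary isometry.
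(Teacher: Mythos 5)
Your proof is correct and is essentially the paper's argument: both approximate $A$ via real rank zero by $\sum_{k} a_k P_k$ and split each block using a positive element of $P_k\mathfrak{A}P_k$ with spectrum $[0,1]$, so that both resulting self-adjoint summands fall under Theorem \ref{uspimain}. The only differences are cosmetic --- the paper verifies the hypotheses cornerwise and then invokes the fact that a finite direct sum of nilpotents is nilpotent, whereas you verify them for the global elements $D_1, D_2$, and you additionally supply (via Coburn's theorem) the existence of the diffuse positive element whose existence the paper merely asserts.
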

\begin{proof}
Clearly the second result follows from the first by considering real and imaginary parts.  To prove the first result, we will first demonstrate that
\[
I_\mathfrak{A} \in \overline{\{M_1 + M_2 \, \mid \, M_1, M_2 \in Nil(\mathfrak{A})\}}.
\]
Note that there exists a positive operator $A \in \mathfrak{A}$ such that $\sigma(A) = [0,1]$.  Thus $A$ and $I_\mathfrak{A} - A$ are limits of nilpotent operators by Theorem \ref{uspimain} (or Proposition \ref{uspipos}) which completes the claim.
\par
Let $A \in \mathfrak{A}_{sa}$ be arbitrary and fix $\epsilon > 0$.  Since $\mathfrak{A}$ has real rank zero (see Theorem V.7.4 in \cite{Da}), there exists non-zero pairwise orthogonal projections $\{P_k\}^n_{k=1} \subseteq \mathfrak{A}$ and scalars $\{a_k\}^n_{k=1}$ such that $\left\| \sum^n_{k=1} a_k P_k- A\right\| < \epsilon$.  Since each $P_k \mathfrak{A}P_k$ is a unital, simple, purely infinite C$^*$-algebra with unit $P_k$, $P_k$ is a limit of the sum of two nilpotent operators from $P_k \mathfrak{A}P_k$.  Since the finite direct sum of nilpotent operators is a nilpotent operator, $\sum^n_{k=1} a_k P_k$ is a limit of sums of two nilpotent operators from $\mathfrak{A}$ and thus the result follows.  
\end{proof}
\begin{cor}
Let $\mathfrak{A}$ be a unital, simple, purely infinite C$^*$-algebra and let $N \in Nor(\mathfrak{A})$ be such that $\lambda I_\mathfrak{A} - N \in \mathfrak{A}^{-1}_0$ for all $\lambda\in \mathbb{C}\setminus \sigma(N)$.  Then
\[
N \in \overline{\{M_1 + M_2 \, \mid \, M_1, M_2 \in Nil(\mathfrak{A})\}}
\]
\end{cor}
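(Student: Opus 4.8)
The plan is to combine Theorem \ref{linfa} with the argument already used in the proof of Theorem \ref{uspisum}. First I would invoke Theorem \ref{linfa}: since $\lambda I_\mathfrak{A} - N \in \mathfrak{A}_0^{-1}$ for every $\lambda \in \mathbb{C} \setminus \sigma(N)$, the normal operator $N$ is a norm limit of normal operators with finite spectra. Hence it suffices to show that every normal operator of the form $N_0 = \sum_{k=1}^n \lambda_k P_k$, where the $P_k$ are non-zero pairwise orthogonal projections with $\sum_{k=1}^n P_k = I_\mathfrak{A}$ and $\lambda_k \in \mathbb{C}$, lies in $\overline{\{M_1 + M_2 \, \mid \, M_1, M_2 \in Nil(\mathfrak{A})\}}$.

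For this, I would fix $\epsilon > 0$ and use that each corner $P_k \mathfrak{A} P_k$ is again a unital, simple, purely infinite C$^*$-algebra with unit $P_k$; thus, by the first paragraph of the proof of Theorem \ref{uspisum}, there exist nilpotent operators $M_1^{(k)}, M_2^{(k)} \in P_k \mathfrak{A} P_k$ with $\left\| P_k - M_1^{(k)} - M_2^{(k)} \right\| < \epsilon / (1 + |\lambda_k|)$. Multiplying by $\lambda_k$ produces nilpotents $\lambda_k M_1^{(k)}, \lambda_k M_2^{(k)} \in P_k \mathfrak{A} P_k$ with $\left\| \lambda_k P_k - \lambda_k M_1^{(k)} - \lambda_k M_2^{(k)} \right\| < \epsilon$. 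Setting $M_i := \sum_{k=1}^n \lambda_k M_i^{(k)}$ for $i = 1, 2$, the operators $M_1$ and $M_2$ are nilpotent: the summands lie in the pairwise orthogonal corners $P_k \mathfrak{A} P_k$, so all cross terms in any power vanish and a common power annihilates the whole sum. Since elements supported in orthogonal corners satisfy $\left\| \sum_k x_k \right\| = \max_k \left\| x_k \right\|$, one gets $\left\| N_0 - M_1 - M_2 \right\| = \max_k \left\| \lambda_k P_k - \lambda_k M_1^{(k)} - \lambda_k M_2^{(k)} \right\| < \epsilon$, and letting $\epsilon \to 0$ completes the reduction; combining with the first paragraph yields the claim.

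The only nontrivial ingredient is Theorem \ref{linfa}, which supplies the finite-spectrum approximation and absorbs the genuine obstruction — namely, approximating a normal operator whose resolvents all lie in $\mathfrak{A}_0^{-1}$ by normals with finite spectra. Everything after that invocation is a routine repackaging of the proof of Theorem \ref{uspisum}, so I do not expect any new difficulty beyond bookkeeping of the constants.
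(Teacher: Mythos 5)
Your proposal is correct and follows exactly the route of the paper: the paper's proof is the one-line observation that one applies Theorem \ref{linfa} to reduce to normal operators with finite spectra and then runs the argument of Theorem \ref{uspisum} (approximating each $\lambda_k P_k$ by a scaled sum of two nilpotents in the corner $P_k\mathfrak{A}P_k$ and taking orthogonal direct sums). You have simply written out in full the details that the paper leaves implicit; there is nothing to correct.
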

\begin{proof}
The result follows from the same argument in Theorem \ref{uspisum} where $N$ can be approximated by normal operators with finite spectrum by Theorem \ref{linfa}.
\end{proof}
We note that if $\mathfrak{A} := \mathcal{O}_n$ is the Cuntz algebra generated by $n$ isometries then $\mathfrak{A}^{-1}_0 = \mathfrak{A}^{-1}$ by \cite{Cu}.  Thus $Nor(\mathcal{O}_n) \subseteq \overline{\{M_1 + M_2 \, \mid \, M_1, M_2 \in Nil(\mathcal{O}_n)\}}$ for all $n \in \mathbb{N}$.
\par
In Corollary 9 of \cite{He2}, Herrero determined the distance from a fixed projection in $\mathcal{B}(\mathcal{H})$ to the nilpotent and quasinilpotent operators was either 0, 1, or $\frac{1}{2}$ and gave necessary and sufficient conditions for each distance.  Using the structure of projections in unital, simple, purely infinite C$^*$-algebras, it is possible to imitate Herrero's work.  We begin by noting the following trivial result which follows from the fact that any element of the open unit ball around the identity in a C$^*$-algebra is invertible and by Lemma \ref{distrestr}.
\begin{lem}[Lemma 8.1 in \cite{Sk}]
\label{projlemma}
Let $\mathfrak{A}$ be a unital C$^*$-algebra.  Then 
\[
dist(I_\mathfrak{A}, Nil(\mathfrak{A})) = dist(I_\mathfrak{A}, QuasiNil(\mathfrak{A}))= 1.
\]
Moreover, if $P \in \mathfrak{A}$ is a non-trivial projection then
\[
\frac{1}{2} \leq dist(P, QuasNil(\mathfrak{A})) \leq dist(P, Nil(\mathfrak{A})) \leq 1.
\]
\end{lem}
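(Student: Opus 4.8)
The plan is to handle the two displays independently; both reduce to the two facts flagged before the statement, namely that every element of the open unit ball about $I_\mathfrak{A}$ is invertible and Lemma~\ref{distrestr}.

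For the first display I would argue as follows. Since $0 \in Nil(\mathfrak{A}) \subseteq QuasiNil(\mathfrak{A})$, both distances are at most $\|I_\mathfrak{A}\| = 1$ (taking $\mathfrak{A} \neq \{0\}$, as one must for the statement to make sense). For the reverse inequality, if $M \in \mathfrak{A}$ satisfies $\|I_\mathfrak{A} - M\| < 1$ then $M$ is invertible, via the Neumann series $M^{-1} = \sum_{k \geq 0}(I_\mathfrak{A} - M)^k$; but a quasinilpotent operator has spectrum $\{0\}$ and so is never invertible. Hence $dist(I_\mathfrak{A}, QuasiNil(\mathfrak{A})) \geq 1$, and since $Nil(\mathfrak{A}) \subseteq QuasiNil(\mathfrak{A})$ the same lower bound holds for $dist(I_\mathfrak{A}, Nil(\mathfrak{A}))$, giving equality with $1$ throughout.

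For the second display, the inequality $dist(P, QuasiNil(\mathfrak{A})) \leq dist(P, Nil(\mathfrak{A}))$ is immediate from $Nil(\mathfrak{A}) \subseteq QuasiNil(\mathfrak{A})$, while $dist(P, Nil(\mathfrak{A})) \leq \|P - 0\| = 1$ since a non-trivial projection has norm one. For the lower bound I would apply Lemma~\ref{distrestr} to the positive operator $P$: because $P$ is non-trivial it is not invertible and $\sigma(P) = \{0, 1\}$, so the lemma gives $dist(P, QuasiNil(\mathfrak{A})) \geq \frac{1}{2}|1 - 0| = \frac{1}{2}$. There is no real obstacle in this argument; the only points requiring a moment's care are excluding the degenerate algebra $\mathfrak{A} = \{0\}$ and recording that a non-trivial projection has both $0$ and $1$ in its spectrum, so that Lemma~\ref{distrestr} applies with a spectral gap of size $1$.
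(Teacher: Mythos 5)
Your proposal is correct and follows exactly the route the paper indicates: it notes before the statement that the lemma ``follows from the fact that any element of the open unit ball around the identity in a C$^*$-algebra is invertible and by Lemma~\ref{distrestr},'' and your argument simply fleshes out those two facts (Neumann series for the first display, and $\sigma(P)=\{0,1\}$ feeding into Lemma~\ref{distrestr} for the lower bound $\tfrac{1}{2}$). Nothing is missing.
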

\begin{thm}
\label{uspipro}
Let $\mathfrak{A}$ be a unital, simple, purely infinite C$^*$-algebra and let $P \in
\mathfrak{A}$ be a projection.  Then
\begin{enumerate}
        \item $dist(P, Nil(\mathfrak{A}))= dist(P, QuasiNil(\mathfrak{A})) = 0$ if $P = 0$,
        \item $dist(P, Nil(\mathfrak{A}))= dist(P, QuasiNil(\mathfrak{A})) = 1$ if $P = I_\mathfrak{A}$, and
        \item $dist(P, Nil(\mathfrak{A}))= dist(P, QuasiNil(\mathfrak{A})) = \frac{1}{2}$ otherwise.
\end{enumerate}
\end{thm}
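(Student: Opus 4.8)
The plan is to dispose of the two trivial cases first and then reduce case (3) to Proposition \ref{uspipos}. If $P=0$ then $P=0\in Nil(\mathfrak{A})\cap QuasiNil(\mathfrak{A})$, so both distances are $0$; if $P=I_\mathfrak{A}$ then both distances equal $1$ by Lemma \ref{projlemma}. So assume $P$ is non-trivial, i.e.\ $P\neq 0$ and $Q:=I_\mathfrak{A}-P\neq 0$. Lemma \ref{projlemma} already gives $\tfrac12\le dist(P,QuasiNil(\mathfrak{A}))\le dist(P,Nil(\mathfrak{A}))\le 1$, so the entire content of case (3) is the single inequality $dist(P,Nil(\mathfrak{A}))\le \tfrac12$.

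To prove this I would interpolate $P$ by a \emph{positive} operator whose spectrum is the whole interval $[0,1]$ — which by Proposition \ref{uspipos} automatically lies in $\overline{Nil(\mathfrak{A})}$ — while keeping it within distance $\tfrac12$ of $P$. The corners $Q\mathfrak{A}Q$ and $P\mathfrak{A}P$ are themselves unital, simple, purely infinite C$^*$-algebras (hereditary subalgebras of purely infinite simple C$^*$-algebras are purely infinite, corners of simple C$^*$-algebras are simple), so, exactly as noted in the proof of Theorem \ref{uspisum}, each contains a positive element with spectrum $[0,1]$; rescaling and translating, choose $h_0\in Q\mathfrak{A}Q$ positive with $\sigma(h_0)=[0,\tfrac12]$ and $h_1\in P\mathfrak{A}P$ positive with $\sigma(h_1)=[\tfrac12,1]$, and set $B:=h_0+h_1\in\mathfrak{A}_+$. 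Since $h_0$ and $h_1$ lie in the complementary corners $Q\mathfrak{A}Q$ and $P\mathfrak{A}P$ (whose sum is a unital C$^*$-subalgebra with the same unit $I_\mathfrak{A}$), one gets $\sigma(B)=\sigma(h_0)\cup\sigma(h_1)=[0,1]$, which is connected and contains $0$, whence $B\in\overline{Nil(\mathfrak{A})}$ by Proposition \ref{uspipos}. Moreover $P-B=(P-h_1)-h_0$ with $P-h_1\in P\mathfrak{A}P$ and $-h_0\in Q\mathfrak{A}Q$, so $\|P-B\|=\max\{\|h_0\|,\|P-h_1\|\}$; here $\|h_0\|=\tfrac12$ and $P-h_1$ is self-adjoint in $P\mathfrak{A}P$ with spectrum $\{\,1-t:t\in[\tfrac12,1]\,\}=[0,\tfrac12]$, so $\|P-h_1\|=\tfrac12$, giving $\|P-B\|=\tfrac12$. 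Approximating $B$ by nilpotents of $\mathfrak{A}$ then forces $dist(P,Nil(\mathfrak{A}))\le\tfrac12$, and combined with the lower bound of Lemma \ref{projlemma} this yields $dist(P,Nil(\mathfrak{A}))=dist(P,QuasiNil(\mathfrak{A}))=\tfrac12$, completing case (3).

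There is no serious obstacle once Proposition \ref{uspipos} is available: the only idea is the choice of the interpolating operator $B$, namely putting the ``lower half'' $[0,\tfrac12]$ of the spectrum on the corner $Q\mathfrak{A}Q$ and the ``upper half'' $[\tfrac12,1]$ on the corner $P\mathfrak{A}P$, which simultaneously glues $\sigma(B)$ into the connected set $[0,1]$ through the common value $\tfrac12$ and pins $\|P-B\|$ at exactly $\tfrac12$. The small structural point to verify is that a nonzero corner of a unital, simple, purely infinite C$^*$-algebra is again of this type, so that positive elements with the prescribed interval spectra exist; this is precisely the ``structure of projections'' input already exploited in Theorem \ref{uspimain} and Theorem \ref{uspisum}.
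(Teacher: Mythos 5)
Your proof is correct, but it takes a genuinely different route from the paper's. The paper handles case (3) by exhibiting, for each $k$, a set of $k+1$ pairwise orthogonal, mutually equivalent projections $\{P\}\cup\{Q_{j,k}\}_{j=1}^k$ (using that $I_\mathfrak{A}-P$ is properly infinite), generating from the implementing partial isometries a copy of $\mathcal{M}_{k+1}(\mathbb{C})$ in which $P$ is a rank-one projection, and then invoking Herrero's finite-dimensional estimate (Theorem 2.12 in \cite{He1}) that a rank-one projection in $\mathcal{M}_{k+1}(\mathbb{C})$ lies within $\frac12+\sin\bigl(\frac{\pi}{m_k+1}\bigr)$ of a nilpotent matrix; letting $k\to\infty$ gives the bound. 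You instead build the positive interpolant $B=h_0+h_1$ with $\sigma(h_0)=[0,\frac12]$ in the corner $(I_\mathfrak{A}-P)\mathfrak{A}(I_\mathfrak{A}-P)$ and $\sigma(h_1)=[\frac12,1]$ in $P\mathfrak{A}P$, so that $\sigma(B)=[0,1]$, $\|P-B\|=\frac12$ exactly, and $B\in\overline{Nil(\mathfrak{A})}$ by Proposition \ref{uspipos}. All the steps check out: the corners are again unital, simple, purely infinite (as the paper itself uses in Proposition \ref{uspipos}); the existence of a positive element with spectrum $[0,1]$ in such an algebra is exactly what is asserted in the proof of Theorem \ref{uspisum}; spectral permanence and the direct-sum norm identity $\|x+y\|=\max\{\|x\|,\|y\|\}$ for elements of complementary corners give $\sigma(B)=[0,1]$ and $\|P-B\|=\frac12$. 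The trade-off: your argument is shorter given the machinery already established, but it routes through Proposition \ref{uspipos} (hence through real rank zero, Lemma \ref{mas}, and the recursive eigenvalue-removal algorithm), whereas the paper's argument is independent of Proposition \ref{uspipos}, stays inside finite matrix subalgebras, and yields an explicit quantitative rate $\frac12+\sin\bigl(\frac{\pi}{m_k+1}\bigr)$ with control on where the approximating nilpotents live.
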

\begin{proof}
Clearly (1) and (2) hold by Lemma \ref{projlemma}.  To see that (3) holds,  it suffices to show 
\[
dist(P, Nil(\mathfrak{A})) \leq \frac{1}{2}
\]
by Lemma \ref{projlemma}.  Since $I_\mathfrak{A} - P$ is a properly infinite projection, for each $k \in \mathbb{N}$ there exists pairwise orthogonal projections
$Q_{1,k}, Q_{2,k}, \ldots, Q_{k, k}$ such that $P$ is equivalent to $Q_{j,k}$ for each $j \in \{1, \ldots, k\}$ and $\sum^{k}_{j=1} Q_{j,k} < I_\mathfrak{A}-P$. 
\par
Let
\[
Q_k := P + \sum^k_{j=1} Q_{j,k}.
\]
Then $P \in Q_k\mathfrak{A}Q_k$ for all $k \in
\mathbb{N}$.  Thus it suffices to show that
\[
\inf_{k\geq 1} dist(P, Nil(Q_k\mathfrak{A}Q_k))
\leq \frac{1}{2}.
\]
Since
\[
\{P\} \cup \{Q_{j,k}\}^k_{j=1}
\]
is a set of equivalent, pairwise orthogonal projections in $\mathfrak{A}$, we can use the
partial isometries implementing the equivalence to construct a copy of
$\mathcal{M}_{k+1}(\mathbb{C})$ with these projections as the orthogonal minimal
projections.  Moreover, by construction, inside this matrix algebra $P$ is a rank one
projection.  Thus, by Theorem 2.12 in \cite{He1}, $P$ is within $\frac{1}{2} +
\sin\left(\frac{\pi}{m_k+1}\right)$ (where $m_k$ is the integer part of $\frac{k}{2}$) of
a nilpotent matrix.  Thus
\[
dist(P, Nil(Q_k\mathfrak{A}Q_k)) \leq \frac{1}{2}
+ \sin\left(\frac{\pi}{m_k+1}\right)
\]
so the result follows.
\end{proof}

\section{AF C$^*$-Algebras}
\label{sec:AFALGEBRAS}

In this section we will investigate when a normal operator in a AF C$^*$-algebra is a norm limit of nilpotent operators.  The study of such operators is intrinsically related to how normal matrices can be asymptotically approximated by nilpotent matrices as we allow the dimension of our matrices to increase.  Proposition \ref{boundedAFD} will provide conditions on an AF C$^*$-algebra that guarantee the intersection of the normal operators and the quasinilpotent operators is trivial whereas Theorem \ref{AFpos} exhibits an AF C$^*$-algebra $\mathfrak{A}$ where $\mathfrak{A}_{sa} \cap \overline{Nil(\mathfrak{A})} \neq \{0\}$.  Moreover, in Theorem \ref{UHFnil} which is the main result of this section, we will demonstrate that every UHF C$^*$-algebra has a normal operator with spectrum equal to the closed unit disk that is a norm limit of nilpotent operators. All of this together (along with Proposition \ref{AFwithdensenil}) implies that the study of $Nor(\mathfrak{A}) \cap \overline{Nil(\mathfrak{A})}$ for AF C$^*$-algebras $\mathfrak{A}$ is incredibly complex.
\par
We begin with the following important result.
\begin{prop}
\label{naqniAF}
Let $\mathfrak{A}$ be an AF C$^*$-algebra and write $\mathfrak{A} = \overline{\bigcup_{k \geq 1} \mathfrak{A}_k}$ where each $\mathfrak{A}_k$ is a finite dimensional C$^*$-algebra.  For each $T \in \mathfrak{A}$ following are equivalent:
\begin{enumerate}
	\item $T \in \overline{QuasiNil(\mathfrak{A})}$.
	\item $T \in \overline{Nil(\mathfrak{A})}$.
	\item $T \in \overline{\bigcup_{k \geq 1}Nil(\mathfrak{A}_k)}$.
\end{enumerate}
\end{prop}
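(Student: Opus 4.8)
The plan is to establish the cyclic chain of implications $(3)\Rightarrow(2)\Rightarrow(1)\Rightarrow(3)$. The first two implications are immediate and carry no content: each $Nil(\mathfrak{A}_k)$ is contained in $Nil(\mathfrak{A})$ (a nilpotent element of a subalgebra is nilpotent in the ambient algebra), so $\overline{\bigcup_{k}Nil(\mathfrak{A}_k)}\subseteq\overline{Nil(\mathfrak{A})}$; and every nilpotent operator is quasinilpotent, so $\overline{Nil(\mathfrak{A})}\subseteq\overline{QuasiNil(\mathfrak{A})}$. Thus everything comes down to proving $(1)\Rightarrow(3)$.

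The key elementary ingredient I would isolate first is a matrix lemma: if $M\in\mathcal{M}_n(\mathbb{C})$ satisfies $\|M^N\|<\delta^N$ for some $N\in\mathbb{N}$, then $dist(M,Nil(\mathcal{M}_n(\mathbb{C})))<\delta$. This follows from the spectral radius estimate $r(M)^N=r(M^N)\le\|M^N\|<\delta^N$ together with Schur's unitary upper-triangularization: write $M=U(D+E)U^*$ with $U$ unitary, $D$ diagonal carrying the eigenvalues of $M$ (so $\|D\|=r(M)<\delta$), and $E$ strictly upper triangular (so $E^n=0$); then $UEU^*\in Nil(\mathcal{M}_n(\mathbb{C}))$ and $\|M-UEU^*\|=\|D\|<\delta$. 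Since the norm on a finite dimensional C$^*$-algebra $\mathfrak{A}_k\cong\bigoplus_i\mathcal{M}_{n_i}(\mathbb{C})$ is the maximum of the block norms, and a finite direct sum of nilpotent matrices is nilpotent, the same statement upgrades verbatim: if $S\in\mathfrak{A}_k$ satisfies $\|S^N\|<\delta^N$, then $dist(S,Nil(\mathfrak{A}_k))<\delta$.

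With this in hand, fix $T\in\overline{QuasiNil(\mathfrak{A})}$ and $\epsilon>0$, and pick a quasinilpotent $Q\in\mathfrak{A}$ with $\|T-Q\|<\epsilon/3$. Since $r(Q)=0$, choose $N\in\mathbb{N}$ with $\|Q^N\|<(\epsilon/3)^N$. Using the telescoping identity $S^N-Q^N=\sum_{j=0}^{N-1}S^j(S-Q)Q^{N-1-j}$, which gives $\|S^N-Q^N\|\le N\|S-Q\|(\|Q\|+\|S-Q\|)^{N-1}$, choose $\eta\in(0,\epsilon/3)$ small enough (depending only on $N$ and $\|Q\|$) that $\|S-Q\|<\eta$ forces $\|S^N\|<(\epsilon/3)^N$. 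By density of $\bigcup_k\mathfrak{A}_k$, select $k$ and $S\in\mathfrak{A}_k$ with $\|Q-S\|<\eta$. The matrix lemma then produces $M\in Nil(\mathfrak{A}_k)$ with $\|S-M\|<\epsilon/3$, so $\|T-M\|\le\|T-Q\|+\|Q-S\|+\|S-M\|<\epsilon$ and $M\in\bigcup_k Nil(\mathfrak{A}_k)$. As $\epsilon>0$ was arbitrary, $T\in\overline{\bigcup_k Nil(\mathfrak{A}_k)}$, which is $(3)$.

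The only place any care is required — and the one I expect to be the main (though modest) obstacle — is the order of quantifiers in the last paragraph: one must first extract $N$ from the quasinilpotence of $Q$, and only afterwards fix the perturbation tolerance $\eta$, since the perturbation bound on $\|S^N\|$ degrades as $N$ grows. It is worth noting that $N$ may be far larger than any matrix block size occurring in $\mathfrak{A}_k$, which is essential because an AF C$^*$-algebra may have uniformly bounded block sizes; the estimate $r(M)\le\|M^N\|^{1/N}$ holds for every $N\ge 1$, so no relation between $N$ and the block sizes is needed.
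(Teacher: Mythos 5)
Your argument is correct and takes essentially the same route as the paper: approximate $T$ by a quasinilpotent, push that approximant into some $\mathfrak{A}_k$ while keeping its spectral radius below $\epsilon$, and then subtract the diagonal part of a Schur (upper-triangular) form in each matrix block to land within $\epsilon$ of $Nil(\mathfrak{A}_k)$. The only difference is cosmetic: you establish the needed smallness of $\sigma(S)$ by hand via the estimate $r(S)^N \le \|S^N\|$ and a telescoping perturbation bound, whereas the paper simply invokes upper semicontinuity of the spectrum to choose $M_0 \in \mathfrak{A}_k$ with $\sigma(M_0)$ inside the $\epsilon$-disk.
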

\begin{proof}
Clearly (3) implies (2) and (2) implies (1).  Suppose $T \in \overline{QuasiNil(\mathfrak{A})}$.  Let $\epsilon > 0$ and choose $M \in QuasiNil(\mathfrak{A})$ such that $\left\|T - M\right\| < \epsilon$.  Since $M \in \overline{\bigcup_{k \geq 1} \mathfrak{A}_k}$ and by the semicontinuity of the spectrum, there exist an $k \in \mathbb{N}$ and an operator $M_0 \in \mathfrak{A}_k$ such that $\left\|M_0 - M\right\| < \epsilon$ and 
\[
\sigma(M_0) \subseteq \left\{z \in \mathbb{C} \, \mid \, dist(z, \sigma(M)) < \epsilon\right\} = \left\{z \in \mathbb{C} \, \mid \, |z| < \epsilon\right\}.
\]
Since $\mathfrak{A}_k$ is a finite dimensional C$^*$-algebra, $\mathfrak{A}_k$ is a direct sum of matrix algebras.  Thus $M_0$ is unitarily equivalent to a direct sum of upper triangular matrices.  Each of these upper triangular matrices is the sum of a nilpotent matrix and a diagonal matrix whose diagonal entries are in $\sigma(M_0)$.  Since the equivalence is via a unitary, by subtracting the diagonal part we obtain an $M' \in Nil(\mathfrak{A}_k)$ such that 
\[
\left\|M_0 - M'\right\| \leq \sup\left\{|z| \, \mid \, z \in \sigma(M_0)\right\} < \epsilon.
\]
Therefore $\left\|T - M'\right\| < 3\epsilon$ completing the proof.
\end{proof}
\begin{rem}
\label{AFremarks}
The study of which normal operators of an AF C$^*$-algebra are in the closure of the nilpotent operators is intrinsically connected to the distribution of eigenvalues of normal matrices that are asymptotically approximated by nilpotent matrices as we allow the dimension of the matrices to increase.  
\par
Indeed if $\mathfrak{A}$ is an AF C$^*$-algebra with $\mathfrak{A} = \overline{\bigcup_{k \geq 1} \mathfrak{A}_k}$ where $\mathfrak{A}_1 \stackrel{\alpha_1}{\rightarrow} \mathfrak{A}_2 \stackrel{\alpha_2}{\rightarrow} \mathfrak{A}_3 \stackrel{\alpha_3}{\rightarrow} \cdots$ is a direct limit of finite dimensional C$^*$-algebras with $\alpha_k$ injective for all $k \in \mathbb{N}$, then it is easy to see by Proposition \ref{naqniAF} and by \cite{Li1} that $N \in Nor(\mathfrak{A}) \cap \overline{Nil(\mathfrak{A})}$ if and only if for each $k \in \mathbb{N}$ there exists an $N_k \in Nor(\mathfrak{A}_k)$ such that $N = \lim_{k\to\infty} N_k$ and $\lim_{k\to\infty} dist(N_k, Nil(\mathfrak{A}_k)) = 0$.  Moreover, since $N = \lim_{k\to\infty} N_k$, $\lim_{k\to\infty}\left\|\alpha_k(N_k) - N_{k+1}\right\| = 0$.  This is possible only if for each $k \in \mathbb{N}$ the eigenvalues of $\alpha_k(N_k)$ and  $N_{k+1}$ (including multiplicities) can be paired together in a manner such that the maximum of the absolute values of the differences tends to zero as $k$ tends to infinity.   
\par
Similarly, if $N_k \in Nor(\mathfrak{A}_k)$ can be chosen such that for each $k \in \mathbb{N}$ the eigenvalues of $\alpha_k(N_k)$ and  $N_{k+1}$ (including multiplicities) can be paired together inside the appropriate direct summand of $\mathfrak{A}_{k+1}$ in a manner such that the maximum of the absolute values of the differences tends to zero as $k$ tends to infinity and $\lim_{k\to\infty} dist(N_k, Nil(\mathfrak{A}_k)) = 0$, then, by taking unitary conjugates of the matrices $N_k$, it is possible to construct a Cauchy sequence in $\mathfrak{A}$ that converges to a normal operator $N$ in the closure of the nilpotent operators.
\end{rem}
\begin{exam}
\label{2nnil}
For each $n \in \mathbb{N}$ let $A_n \in \mathcal{M}_{2^n}(\mathbb{C})$ be a diagonal matrix with spectrum $\left\{\frac{1}{2^n}, \frac{2}{2^n}, \ldots, 1\right\}$.  Then 
\[
\liminf_{n\to\infty} dist(A_n, Nil(\mathcal{M}_{2^n}(\mathbb{C}))) > 0.
\]
To see this, we note that the sequence $(A_n)_{n\geq1}$ can be used to construct a Cauchy sequence in the $2^\infty$-UHF C$^*$-algebra $\mathfrak{A}$ that converges to a non-zero positive operator $A$.  If $\liminf_{n\to\infty} dist(A_n, Nil(\mathcal{M}_{2^n}(\mathbb{C}))) = 0$ then $A$ would be the limit of elements of $Nil(\mathfrak{A})$ which would contradict Proposition 4.6 in \cite{Sk} as $\mathfrak{A}$ has a faithful tracial state.  
\par
Alternatively
\[
\liminf_{n\to\infty} dist(A_n, Nil(\mathcal{M}_{2^n}(\mathbb{C}))) \geq \frac{1}{2}
\]
since the normalized trace on $\mathcal{M}_{2^n}(\mathbb{C})$ has norm one, the normalized traces of $A_n$ tend to $\frac{1}{2}$ as $n$ tends to infinity, and the trace of any nilpotent matrix is zero.
\end{exam}
Note, in the above example, we can view each $A_n$ as a positive operator whose spectrum is the first $2^n$ entries of the sequence $\{1, \frac{1}{2}, \frac{3}{4}, \frac{1}{4}, \frac{7}{8}, \ldots\}$.  Thus, by Remarks \ref{AFremarks}, we are interested in the following question: ``Given a sequence $(a_n)_{n\geq 1} \in \ell_{\infty}(\mathbb{N})$ does $\liminf_{n\to\infty} dist(diag(a_1, \ldots, a_n), Nil(\mathcal{M}_n(\mathbb{C}))) = 0$?"  An application of Lemma \ref{distrestr} implies $\overline{\{a_n\}_{n\geq1}}$ must be a connected set containing zero in order for an affirmative answer to this question.  Thus the following is of particular interest.
\begin{prop}
There exists a sequence $(a_n)_{n\geq 1} \in \ell_\infty(\mathbb{C})_+$ with $\overline{\{a_n\}_{n\geq1}} = [0,1]$ such that 
\[
\liminf_{n\to\infty} dist(diag(a_1, \ldots, a_n), Nil(\mathcal{M}_n(\mathbb{C}))) = 0.
\]
\end{prop}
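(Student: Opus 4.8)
The plan is to build $(a_n)_{n\geq1}$ by concatenating finite blocks. Let $\delta_j \downarrow 0$ and let $0 = N_0 < N_1 < N_2 < \cdots$ be the block endpoints, chosen during the construction. We will arrange that for every $j$ the diagonal matrix $diag(a_1,\ldots,a_{N_j})$ is unitarily equivalent to a positive $N_j\times N_j$ matrix $C_j$ with $dist(C_j, Nil(\mathcal{M}_{N_j}(\mathbb{C}))) < 2\delta_j$, and that $\{a_1,\ldots,a_{N_j}\}$ is $\delta_j$-dense in $[0,1]$ while all $a_n$ lie in $[0,1]$. Since a diagonal matrix is unitarily equivalent to any positive matrix with the same multiset of eigenvalues, and $Nil(\mathcal{M}_{N_j}(\mathbb{C}))$ and the operator norm are invariant under unitary conjugation, the first arrangement gives $dist(diag(a_1,\ldots,a_{N_j}), Nil(\mathcal{M}_{N_j}(\mathbb{C}))) < 2\delta_j$, so that $\liminf_{n\to\infty} dist(diag(a_1,\ldots,a_n), Nil(\mathcal{M}_n(\mathbb{C}))) = 0$; the second gives $\overline{\{a_n\}_{n\geq1}} = [0,1]$.

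For the recursion, suppose $a_1,\ldots,a_{N_{j-1}}$ have been defined, with multiset of values $T_{j-1}$ (so $T_0 = \emptyset$). Using Lemma \ref{mas} and Corollary \ref{densespectra}, choose $m_j$ large enough that $dist(A_{m_j}, Nil(\mathcal{M}_{m_j}(\mathbb{C}))) < \delta_j$ and $\sigma(A_{m_j})$ is $\delta_j$-dense in $[0,1]$. Put $r_j := N_{j-1}+1$ and consider $A_{m_j}^{\oplus r_j}$, a positive matrix of size $N_j := m_j r_j$: it satisfies $dist(A_{m_j}^{\oplus r_j}, Nil(\mathcal{M}_{N_j}(\mathbb{C}))) < \delta_j$ (an $r_j$-fold direct sum of a nilpotent matrix approximating $A_{m_j}$ is nilpotent), has spectrum $\sigma(A_{m_j})$, and has every eigenvalue of multiplicity at least $r_j > N_{j-1}$. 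Now perturb its eigenvalues while keeping an eigenbasis fixed: for each distinct value $t$ in $T_{j-1}$, of multiplicity $c_t$ there, pick an eigenvalue $\mu$ of $A_{m_j}^{\oplus r_j}$ with $|\mu - t| < \delta_j$ (possible by $\delta_j$-density) and relabel $c_t$ of its copies as $t$; for a fixed $\mu$ the number of copies relabelled is at most $|T_{j-1}| = N_{j-1} < r_j \leq$ (multiplicity of $\mu$), so this is possible and leaves at least one copy of every original eigenvalue. Let $C_j$ be the resulting positive matrix. Then $\|C_j - A_{m_j}^{\oplus r_j}\| < \delta_j$, hence $dist(C_j, Nil(\mathcal{M}_{N_j}(\mathbb{C}))) < 2\delta_j$; its spectrum, a multiset of size $N_j$, contains $T_{j-1}$ as a sub-multiset and contains $\sigma(A_{m_j})$ as a set. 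Letting $T'$ be the multiset difference $\sigma(C_j)$ minus $T_{j-1}$, define $a_{N_{j-1}+1},\ldots,a_{N_j}$ to be an enumeration of $T'$. Then the multiset of $\{a_1,\ldots,a_{N_j}\}$ is exactly $\sigma(C_j)$, so $diag(a_1,\ldots,a_{N_j})$ is unitarily equivalent to $C_j$; moreover $\{a_1,\ldots,a_{N_j}\} \supseteq \sigma(A_{m_j})$ is $\delta_j$-dense in $[0,1]$, and all the entries lie in $[0,1]$.

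Both required properties are then immediate: the unitary equivalence $diag(a_1,\ldots,a_{N_j}) \sim C_j$ forces $dist(diag(a_1,\ldots,a_{N_j}), Nil(\mathcal{M}_{N_j}(\mathbb{C}))) < 2\delta_j \to 0$, so the $\liminf$ over all $n$ is $0$; and since $\{a_1,\ldots,a_{N_j}\}$ is $\delta_j$-dense in $[0,1]$ with $\delta_j\to 0$ and every $a_n \in [0,1]$, we get $\overline{\{a_n\}_{n\geq1}} = [0,1]$. The only genuine obstacle is absorbing the already-committed prefix $T_{j-1}$ into the spectrum of a near-nilpotent positive matrix with a perturbation of size less than $\delta_j$, which requires enough eigenvalues of the right multiplicities near each value of $T_{j-1}$ (note $\sigma(A_{m_j})$ itself may have arbitrary multiplicity structure); the $r_j$-fold amplification $A_{m_j}^{\oplus r_j}$, with $r_j > N_{j-1}$, supplies more than enough room. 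The rest — that the blocks assemble into a single bounded sequence and that the amplified matrices stay asymptotically nilpotent — is routine from Lemma \ref{mas} and Corollary \ref{densespectra}.
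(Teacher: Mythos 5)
Your proof is correct and follows essentially the same recursive block construction as the paper's: at each stage you amplify a near-nilpotent, spectrally dense $A_{m_j}$ by a direct sum large enough that the previously committed eigenvalues can be matched within $\delta_j$ to eigenvalues of the amplified matrix, and then append the leftover eigenvalues. Your explicit choice $r_j = N_{j-1}+1$ together with the multiplicity count makes the matching step fully transparent, where the paper merely asserts the existence of a suitable $m_1$ and injective map $f_1$.
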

\begin{proof}
By Lemma \ref{mas} for each $n \in \mathbb{N}$ there exists a positive matrix $A_n \in \mathcal{M}_{n}(\mathbb{C})$ of norm one such that $\lim_{n\to\infty}dist(A_n, Nil(\mathcal{M}_n(\mathbb{C}))) = 0$.  Choose $n_1 \in \mathbb{N}$ such that $dist(A_{n_1}, Nil(\mathcal{M}_{n_1}(\mathbb{C}))) \leq 1$.  Let the first $n_1$ of the scalars $a_j$ be the eigenvalues of $A_{n_1}$ (including multiplicity).
\par
Let $R_1 := A_{n_1}$.  By Corollary \ref{densespectra} $\sigma(A_n)$ progressively gets dense in $[0,1]$ as $n$ increases.  Therefore there exists an $n_2 \in \mathbb{N}$ such that 
\[
\sigma(A_{n_2}) \cap \left[ \frac{k}{2^2}, \frac{k+1}{2^2}\right) \neq \emptyset
\]
for all $k \in \{0,1, 2, 3\}$ and 
\[
dist(A_{n_2}, Nil(\mathcal{M}_{n_2}(\mathbb{C}))) \leq \frac{1}{2^2}.
\]
By comparing the eigenvalues of $R_1$ and $A_{n_2}$ there exists an $m_1 \in \mathbb{N}$ and an injective map  $f_1$ from the eigenvalues of $R_1$ (including multiplicity) to the eigenvalues of $A_{n_2}^{\oplus m_1}$ (including multiplicity) such that $|\lambda - f_1(\lambda)| \leq\frac{1}{4}$ for all eigenvalues $\lambda$ of $R_1$ (including multiplicity).  Therefore, if $A_{n_2}^{\oplus m_1} \ominus R_1$ denotes the $(m_1n_2-n_1)\times (m_1n_2-n_1)$ diagonal matrix whose diagonal entries are the eigenvalues of $A_{n_2}^{\oplus m_1}$ (including multiplicities) excluding $f_1(\lambda)$ for all eigenvalues $\lambda$ of $R_1$ (including multiplicity), then 
\[
R_2 := R_1 \oplus (A_{n_2}^{\oplus m_1} \ominus R_1)
\]
is within $\frac{1}{4}$ of a unitary conjugate of $A_{n_2}^{\oplus m_1}$ and thus 
\[
\begin{array}{rcl}
dist(R_2, Nil(\mathcal{M}_{n_2m_1}(\mathbb{C})) &\leq& \frac{1}{4} + dist(A_{n_2}^{\oplus m_1}, Nil(\mathcal{M}_{n_2m_1}(\mathbb{C}))  \\
 &\leq& \frac{1}{4} + dist(A_{n_2}, Nil(\mathcal{M}_{n_2}(\mathbb{C})) \leq \frac{1}{2}. 
 \end{array} 
\]
Thus define the next $m_1n_2-n_1$ of the scalars $a_j$ to be the eigenvalues of $A_{n_2}^{\oplus m_1} \ominus R_1$ (including multiplicity).
\par
By continuing this process ad infinitum, the desired sequence $(a_n)_{n\geq 1}$ is obtained.
\end{proof}
Of course the existence of the above sequence does not imply that there exists an AF C$^*$-algebra with a non-zero positive operator in the closure of the nilpotent operators as the structure required for such an operator is more complex (see Remarks \ref{AFremarks}).  However, an example of such a AF C$^*$-algebra is an easy application of the theory developed in Section \ref{sec:USPI}.
\begin{thm}
\label{AFpos}
There exists an AF C$^*$-algebra $\mathfrak{A}$ such that $\mathfrak{A}_{+} \cap \overline{Nil(\mathfrak{A})} \neq \{0\}$.
\end{thm}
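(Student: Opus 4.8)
The plan is to carry out the approximation argument from the proof of Proposition \ref{uspipos} inside a fixed unital, simple, purely infinite C$^*$-algebra $\mathfrak{B}$, while keeping every operator, projection, and partial isometry that occurs inside an increasing sequence of finite-dimensional subalgebras; the desired AF algebra will be the closure of the union of these subalgebras. Fix $\mathfrak{B}$ (for instance the Cuntz algebra $\mathcal{O}_2$). By Proposition \ref{uspipos} there is a positive $A \in \mathfrak{B}$ with $\sigma(A) = [0,1]$, so $A \in \overline{Nil(\mathfrak{B})}$. What matters here is not just this limit but the mechanism of its proof: for each $\epsilon > 0$ the argument produces a positive operator $A^{(\epsilon)}$ of finite spectrum with $\left\|A - A^{(\epsilon)}\right\| \le \epsilon$ and a nilpotent $M^{(\epsilon)}$ with $\left\|A - M^{(\epsilon)}\right\| \le 3\epsilon$, both lying in a single finite-dimensional C$^*$-subalgebra of $\mathfrak{B}$, namely the one generated by the (finitely many) spectral projections of $A^{(\epsilon)}$ together with the (finitely many) partial isometries that implement the equivalences of projections used in the proof.

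I would then run this construction recursively along $\epsilon_n := 1/n$, maintaining a chain $\mathfrak{F}_1 \subseteq \mathfrak{F}_2 \subseteq \cdots$ of finite-dimensional C$^*$-subalgebras of $\mathfrak{B}$ with $A^{(\epsilon_n)}, M^{(\epsilon_n)} \in \mathfrak{F}_n$. At the inductive step, before performing the scale-$\epsilon_{n+1}$ round, one first enlarges $\mathfrak{F}_n$ to a finite-dimensional subalgebra of $\mathfrak{B}$ that already contains all the systems of matrix units this round will require, and then runs the scale-$\epsilon_{n+1}$ approximation inside, and subordinate to, that enlargement. Both of these are possible because $\mathfrak{B}$ is unital, simple, and purely infinite: every nonzero projection of $\mathfrak{B}$ is properly infinite, hence splits into arbitrarily many mutually orthogonal, mutually equivalent nonzero subprojections, so any finite-dimensional subalgebra can be refined and extended to a larger one inside $\mathfrak{B}$, and all the equivalences of projections demanded by Proposition \ref{uspipos}'s argument can be obtained by subdividing projections already present.

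With the chain in hand, set $\mathfrak{A} := \overline{\bigcup_{n\ge 1}\mathfrak{F}_n}$, which is an AF C$^*$-algebra. Since $A = \lim_n A^{(\epsilon_n)}$ with $A^{(\epsilon_n)} \in \mathfrak{F}_n \subseteq \mathfrak{A}$, we get $A \in \mathfrak{A}_+$; and $A \ne 0$ because $\sigma(A) = [0,1]$. Also $A = \lim_n M^{(\epsilon_n)}$ with $M^{(\epsilon_n)} \in Nil(\mathfrak{F}_n) \subseteq Nil(\mathfrak{A})$, so $A \in \overline{Nil(\mathfrak{A})}$, and therefore $\mathfrak{A}_{+}\cap\overline{Nil(\mathfrak{A})}\ne\{0\}$. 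The step I expect to be the main obstacle is the bookkeeping in the recursion: one must check that the systems of matrix units arising at successive scales can be chosen compatibly, so that each $\mathfrak{F}_n$ really is a C$^*$-subalgebra of $\mathfrak{F}_{n+1}$, rather than $\mathfrak{F}_n$ and $\mathfrak{F}_{n+1}$ being two finite-dimensional subalgebras of $\mathfrak{B}$ that jointly generate an infinite-dimensional one. This is exactly where the purely infinite hypothesis enters, and it is why the example is an application of the theory of Section \ref{sec:USPI} rather than of the AF-specific results proved so far in this section.
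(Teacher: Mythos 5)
Your single-scale step is fine: for each fixed $\epsilon$ the proof of Proposition \ref{uspipos} does produce a finite-dimensional C$^*$-subalgebra of $\mathcal{O}_2$ containing both a finite-spectrum positive approximant of $A$ and a nearby nilpotent. The gap is the nesting, and it is not bookkeeping that pure infiniteness resolves --- it is the central difficulty of the whole section, which the paper explicitly flags (``the structure required for such an operator is more complex (see Remarks \ref{AFremarks})'') and then deliberately sidesteps. Once you insist that $\mathfrak{F}_n \subseteq \mathfrak{F}_{n+1}$ and that $A^{(\epsilon_{n+1})}$ be norm-close to $A^{(\epsilon_n)}$, Weyl's perturbation inequality forces the sorted eigenvalue list of $A^{(\epsilon_{n+1})}$ in each summand of $\mathfrak{F}_{n+1}$ to be, up to a small perturbation, the multiplicity-inflated eigenvalue list of $A^{(\epsilon_n)}$ padded with zeros. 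You must then show that such inflated-and-padded lists remain uniformly close to the nilpotents of the ambient matrix algebras with error tending to zero, and this is exactly where naive choices fail: Example \ref{2nnil} shows that doubling multiplicities keeps the distance bounded below by $\frac{1}{2}$, and $diag(1,0,\ldots,0)\in\mathcal{M}_r(\mathbb{C})$ stays at distance at least $\frac{1}{2}$ from $Nil(\mathcal{M}_r(\mathbb{C}))$ for every $r$ by Lemma \ref{projlemma}, so padding with zeros does not by itself create nilpotent approximants. Nothing in your proposal addresses this. Moreover, ``enlarge $\mathfrak{F}_n$ first so it contains the matrix units the next round will require'' cannot work as stated: the next round's spectral projections come from a fresh real-rank-zero approximation of $A$, and since $A$ itself does not lie in any $\mathfrak{F}_n$, those projections bear no relation to the matrix units already chosen.

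Note also that working inside $\mathcal{O}_2$ buys you nothing for this step: once the chain $(\mathfrak{F}_n)_{n\geq1}$ and the elements $A^{(\epsilon_n)}$ are fixed, $\overline{\bigcup_{n\geq1}\mathfrak{F}_n}$ depends only on the Bratteli diagram, so the problem reduces to the abstract one of constructing a compatible system of eigenvalue lists --- precisely the problem discussed in Remarks \ref{AFremarks}, of which the Proposition preceding Theorem \ref{AFpos} solves only the multiplicity-free shadow. The paper's actual proof avoids the combinatorics entirely and is genuinely different from yours: it takes $A\in(\mathcal{O}_2)_+$ with $\sigma(A)=[0,1]$, observes that the function $x\mapsto Ax$ lies in $\overline{Nil(\mathfrak{C})}$ for the cone $\mathfrak{C}=C_0((0,1],\mathcal{O}_2)$ (because $x\mapsto xM$ is nilpotent whenever $M$ is), and then invokes Ozawa's theorem that the cone of a separable nuclear C$^*$-algebra is AF-embeddable; the hard AF structure is thus outsourced to \cite{Oz}. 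If you want a hands-on construction of the kind you describe, the closest thing in the paper is the proof of Theorem \ref{UHFnil}, and even there the element produced is normal rather than positive --- a positive example is impossible in a UHF algebra because of the faithful tracial state.
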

\begin{proof}
Let $\mathcal{O}_2$ be the Cuntz algebra generated by two isometries.  Since $\mathcal{O}_2$ is a separable, nuclear C$^*$-algebra, the cone of $\mathcal{O}_2$, $\mathfrak{C} := C_0((0,1], \mathcal{O}_2)$, is AF-embeddable (see Proposition 2 in \cite{Oz} or Theorem 8.3.5 in \cite{BO}).  Hence there exists an AF C$^*$-algebra $\mathfrak{A}$ such that $\mathfrak{C} \subseteq \mathfrak{A}$.  Thus it suffices to show $\mathfrak{C}_{+} \cap \overline{Nil(\mathfrak{C})} \neq \{0\}$.
\par
Let $A \in (\mathcal{O}_2)_+ \setminus \{0\}$ be such that $\sigma(A) = [0,1]$ and let $A' \in \mathfrak{C}_+$ be defined by $A'(x) = A x$ for all $x \in (0,1]$.  Since $A \neq 0$, $A' \neq 0$.  Since $A \in \overline{Nil(\mathcal{O}_2)}$ by Theorem \ref{uspimain} (or simply Proposition \ref{uspipos}), it is trivial to verify that $A' \in \overline{Nil(\mathfrak{C})}$ as desired.
\end{proof}
Using Theorem \ref{AFpos} and Proposition \ref{naqniAF}, it is easy to obtain the following that enables us to improve Lemma \ref{mas} by bounding the nilpotency degrees of the approximating nilpotent matrices.  Theorem \ref{AFpos}, Proposition \ref{naqniAF}, and Remarks \ref{AFremarks} together also imply that Lemma \ref{mas} holds with the additional property that the distribution of eigenvalues of the sequence $A_n$ is `not too poorly behaved'.
\begin{cor}
There exists an increasing sequence of natural numbers $(k_n)_{n\geq1}$ and a sequence of positive matrices $A_n \in \mathcal{M}_{k_n}(\mathbb{C})$ of norm one such that for every $\epsilon > 0$ there exists an index $m \in \mathbb{N}$ and a $\ell \in \mathbb{N}$ such that 
\[
dist(A_n, Nil_\ell(\mathcal{M}_{k_n}(\mathbb{C}))) < \epsilon
\]
for all $n\geq m$ (where $Nil_\ell(\mathcal{M}_{k_n}(\mathbb{C}))$ is the set of nilpotent $k_n \times k_n$-matrices of nilpotency index at most $\ell$).
\end{cor}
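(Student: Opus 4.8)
The plan is to read off the required sequence of matrices from the AF C$^*$-algebra produced by Theorem \ref{AFpos}. That theorem gives an AF C$^*$-algebra $\mathfrak{A} = \overline{\bigcup_{k\geq1}\mathfrak{A}_k}$, with each $\mathfrak{A}_k$ a finite direct sum of full matrix algebras, together with a non-zero positive operator in $\mathfrak{A}_{+} \cap \overline{Nil(\mathfrak{A})}$; since $\overline{Nil(\mathfrak{A})}$ is closed under scaling we may rescale and assume this operator $A$ satisfies $\left\|A\right\| = 1$. By Proposition \ref{naqniAF}, $A \in \overline{\bigcup_{k\geq1}Nil(\mathfrak{A}_k)}$, so for each $j \in \mathbb{N}$ I can choose an index $k(j)$ and a nilpotent $M^{(j)} \in Nil(\mathfrak{A}_{k(j)})$ with $\left\|A - M^{(j)}\right\| < \frac{1}{j}$; let $\ell_j$ be the nilpotency index of $M^{(j)}$. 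Because $\bigcup_k\mathfrak{A}_k$ is dense and the $\mathfrak{A}_k$ are nested, I may in addition require that $k(1) < k(2) < \cdots$ and that for each $j$ there is a positive $A^{(j)} \in (\mathfrak{A}_{k(j)})_{+}$ with $\left\|A^{(j)}\right\| = 1$ and $\left\|A^{(j)} - A\right\| < \frac{1}{j}$ (approximate $A$ by a self-adjoint element of $\mathfrak{A}_{k(j)}$, apply the functional calculus $t \mapsto \max\{t,0\}$, and normalize).

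The key observation is that the nilpotency bound travels up the tower: for $n > j$ we have $M^{(j)} \in \mathfrak{A}_{k(j)} \subseteq \mathfrak{A}_{k(n)}$, and $(M^{(j)})^{\ell_j} = 0$ forces every matrix-algebra block of $M^{(j)}$, taken inside $\mathfrak{A}_{k(n)}$, to be nilpotent of index at most $\ell_j$ --- a bound not depending on $n$. Now decompose $A^{(n)} = \bigoplus_i A^{(n)}_i$ along the matrix summands of $\mathfrak{A}_{k(n)}$, choose $i(n)$ so that $\left\|A^{(n)}_{i(n)}\right\| = \left\|A^{(n)}\right\| = 1$, and set $A_n := A^{(n)}_{i(n)} \in \mathcal{M}_{k_n}(\mathbb{C})$, a positive matrix of norm one. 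Comparing the $i(n)$-th blocks of $A^{(n)}$ and of $M^{(j)}$ gives, for every $n > j$,
\[
dist\big(A_n, Nil_{\ell_j}(\mathcal{M}_{k_n}(\mathbb{C}))\big) \leq \left\|A^{(n)} - M^{(j)}\right\| < \frac{1}{n} + \frac{1}{j} < \frac{2}{j}.
\]

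Next I would check that $k_n \to \infty$: otherwise infinitely many $A_n$ embed, after padding with zeros, into a single $\mathcal{M}_K(\mathbb{C})$, and the displayed estimate with $j \to \infty$ along that subsequence shows their distance to $Nil(\mathcal{M}_K(\mathbb{C}))$ goes to $0$; by compactness of the unit ball and closedness of $Nil(\mathcal{M}_K(\mathbb{C}))$ a subsequence would converge to a positive, norm-one, nilpotent matrix, hence to $0$ --- a contradiction. Passing to a subsequence, I may therefore assume $(k_n)_{n\geq1}$ is strictly increasing, with the displayed estimate inherited. Finally, given $\epsilon > 0$, pick $j$ with $\frac{2}{j} < \epsilon$, set $\ell := \ell_j$, and choose $m$ so that every retained index exceeds $j$; the displayed estimate then yields $dist(A_n, Nil_\ell(\mathcal{M}_{k_n}(\mathbb{C}))) < \epsilon$ for all $n \geq m$, which is exactly the assertion.

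I expect the main difficulty to be organizational rather than deep: one must produce a single fixed sequence $(A_n)$ that simultaneously handles every $\epsilon$ with a nilpotency degree depending only on $\epsilon$, and the crucial point is that a nilpotent approximant chosen inside a fixed finite-dimensional subalgebra $\mathfrak{A}_{k(j)}$ keeps its (fixed) nilpotency degree when reused at all higher levels of the tower --- this is precisely what Proposition \ref{naqniAF} buys us over the unbounded-degree approximants of Lemma \ref{mas}. The remaining ingredients (the functional-calculus normalization, the block bookkeeping, and the compactness argument forcing $k_n \to \infty$) are routine.
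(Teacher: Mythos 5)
Your argument is correct and follows exactly the route the paper intends: the corollary is stated there without proof as a consequence of Theorem \ref{AFpos} and Proposition \ref{naqniAF}, and your key point --- that a nilpotent approximant chosen in a fixed $\mathfrak{A}_{k(j)}$ retains nilpotency index at most $\ell_j$ in every block of every larger $\mathfrak{A}_{k(n)}$ --- is precisely the mechanism that makes those two results suffice. The remaining bookkeeping (normalization, block selection, and forcing $k_n \to \infty$) is handled adequately.
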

Next we have the following trivial observation that demonstrates several AF C$^*$-algebras where no non-zero normal operators are limits of quasinilpotent operators.
\begin{prop}
\label{boundedAFD}
Suppose $\mathfrak{A} = \overline{\bigcup_{k \geq 1} \mathfrak{A}_k}$ where $\mathfrak{A}_1 \stackrel{\alpha_1}{\rightarrow} \mathfrak{A}_2 \stackrel{\alpha_2}{\rightarrow} \mathfrak{A}_3 \stackrel{\alpha_3}{\rightarrow} \cdots$ is a direct limit of finite dimensional C$^*$-algebras with $\alpha_k$ injective for all $k \in \mathbb{N}$.  If $\mathfrak{A}_k = \oplus^{m_k}_{j=1} \mathcal{M}_{n_{j,k}}(\mathbb{C})$ and $\{n_{j,k}\}_{j,k\geq 1}$ is a bounded set, then $Nor(\mathfrak{A}) \cap \overline{QuasiNil(\mathfrak{A})} = \{0\}$.
\end{prop}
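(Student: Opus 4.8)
The plan is to combine Proposition \ref{naqniAF} with the observation that, under the stated hypothesis, there is a uniform bound on the nilpotency index of every nilpotent element occurring in the approximating finite dimensional subalgebras; a normal element that is killed by a fixed power must vanish, and the result follows.

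First I would set $D := \sup\{n_{j,k} \, \mid \, j, k \geq 1\}$, which is finite by hypothesis. Since each $\mathfrak{A}_k = \bigoplus_{j=1}^{m_k} \mathcal{M}_{n_{j,k}}(\mathbb{C})$ is finite dimensional and $\mathfrak{A} = \overline{\bigcup_{k \geq 1} \mathfrak{A}_k}$, Proposition \ref{naqniAF} yields $\overline{QuasiNil(\mathfrak{A})} = \overline{\bigcup_{k \geq 1} Nil(\mathfrak{A}_k)}$. Next, if $M \in Nil(\mathfrak{A}_k)$, write $M = (M_1, \ldots, M_{m_k})$ with $M_j \in \mathcal{M}_{n_{j,k}}(\mathbb{C})$; each $M_j$ is a nilpotent $n_{j,k} \times n_{j,k}$ matrix, so $M_j^{n_{j,k}} = 0$ and hence $M_j^{D} = 0$, giving $M^D = 0$. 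Therefore $\bigcup_{k \geq 1} Nil(\mathfrak{A}_k) \subseteq \{S \in \mathfrak{A} \, \mid \, S^D = 0\}$, and since $S \mapsto S^D$ is norm continuous this latter set is closed, so $\overline{QuasiNil(\mathfrak{A})} \subseteq \{S \in \mathfrak{A} \, \mid \, S^D = 0\}$.

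Finally I would use the fact that a normal element $N$ of a C$^*$-algebra satisfies $\left\|N^j\right\| = \left\|N\right\|^j$ for every $j \in \mathbb{N}$ (the norm of a normal element equals its spectral radius, $N^D$ is again normal, and $r(N^D) = r(N)^D$). Hence if $N \in Nor(\mathfrak{A}) \cap \overline{QuasiNil(\mathfrak{A})}$ then $N^D = 0$, so $\left\|N\right\|^D = \left\|N^D\right\| = 0$ and $N = 0$; the reverse containment is trivial since $0 \in Nil(\mathfrak{A})$. There is no genuine obstacle here: the entire content is the uniform nilpotency bound supplied by the boundedness of the matrix sizes together with Proposition \ref{naqniAF}, which is precisely why this is recorded as a trivial observation.
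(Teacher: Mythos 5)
Your proof is correct and is essentially identical to the paper's: both extract the uniform bound $D = \sup_{j,k} n_{j,k}$, use Proposition \ref{naqniAF} to reduce to limits of nilpotents from the $\mathfrak{A}_k$, conclude $N^D = 0$ by continuity of $S \mapsto S^D$, and finish using normality. You merely spell out the last step (via the spectral radius identity $\left\|N^D\right\| = \left\|N\right\|^D$) in more detail than the paper does.
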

\begin{proof}
Suppose $N \in Nor(\mathfrak{A}) \cap \overline{QuasiNil(\mathfrak{A})}$ and let $\ell := \sup_{j,k\geq 1} n_{j,k} < \infty$.  Therefore $M^\ell = 0$ for all $M \in \bigcup_{k\geq 1} Nil(\mathfrak{A}_k)$ so $N^\ell = 0$ by Proposition \ref{naqniAF}.  Hence $N = 0$.
\end{proof}
The main result of this section is Theorem \ref{UHFnil} which gives examples of normal operators in each UHF C$^*$-algebra that are limits of nilpotent operators.  This result is slightly surprising since every UHF C$^*$-algebra has a faithful tracial state yet \cite{Sk} demonstrated that faithful tracial states impose restrictions on when normal operators can be limits of nilpotent operators.  In particular, Proposition 4.6 in \cite{Sk} shows that $\mathfrak{A}_{sa} \cap \overline{QuasiNil(\mathfrak{A})} = \{0\}$ for every UHF C$^*$-algebra $\mathfrak{A}$ (also see Corollary 4.8, Lemma 4.12, and Theorem 4.13 in \cite{Sk}).
\par
The main tool in this construction is Lemma 5.4 in \cite{Sk} which is based on Section 2.3.3 of \cite{He1}. 
\begin{lem}[Lemma 5.4 in \cite{Sk}]
\label{nmctnannoy}
Let $n,m \in \mathbb{N}$ with $m\geq 2$ and choose $0 = a_0 < a_1 < a_2 < \ldots < a_m = 1$.  Then there exists an $M \in Nil(\mathcal{M}_{(2m+1)n+1}(\mathbb{C}))$ and an $N \in Nor(\mathcal{M}_{(2m+1)n+1}(\mathbb{C}))$ such that
\[
\left\|M-N\right\| \leq \frac{\pi}{n} + \max_{0 \leq k \leq m-1}  |a_{k+1} - a_k|
\]
and
\[
\sigma(N) =  \left\{a_k \mathrm{e}^{\frac{\pi \mathrm{i}}{n} j} \, \mid \, j \in \{1,\ldots, 2n\}, k \in \{0,\ldots, m\}\right\}
\]
with the multiplicity of zero being $n + 1$ and the multiplicity of every other eigenvalue being one.
\end{lem}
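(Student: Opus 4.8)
The statement is Lemma~5.4 of \cite{Sk}, and its proof is a concrete finite-dimensional construction in the spirit of the ``staircase'' approximations of Section~2.3.3 of \cite{He1}; so the plan is to exhibit $N$ and $M$ explicitly and then read off the two assertions. First I would fix the normal operator. Split $\mathbb{C}^{(2m+1)n+1} = \mathbb{C}^{2nm}\oplus\mathbb{C}^{n+1}$, let $U$ be the cyclic shift on $\mathbb{C}^{2n}$ (a unitary whose spectrum is the simple set of $2n$-th roots of unity $\{\mathrm{e}^{\pi\mathrm{i}j/n} : 1\le j\le 2n\}$), and put
\[
N := \left(\bigoplus_{k=1}^{m} a_k\, U\right)\ \oplus\ 0_{n+1}.
\]
Then $N$ is normal, and since the $a_k$ are distinct and positive and $U$ has simple spectrum, $\sigma(N) = \{a_k\mathrm{e}^{\pi\mathrm{i}j/n} : 1\le k\le m,\ 1\le j\le 2n\}\cup\{0\}$ with $0$ of multiplicity exactly $n+1$ (the summand $0_{n+1}$) and every other eigenvalue simple; the dimension count $2nm+(n+1)=(2m+1)n+1$ is built in. (Any normal operator with this spectrum is unitarily equivalent to this one, so this choice costs nothing.)

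The core of the argument is to produce a nilpotent $M$ within $\tfrac{\pi}{n}+\max_{0\le k\le m-1}|a_{k+1}-a_k|$ of $N$. The idea is to replace the $m$ disjoint ``circular tracks'' of $N$ by a single \emph{acyclic} spiral staircase: one threads the tracks together from the outermost ($k=m$) inward to $k=1$ and then down into the $(n+1)$-dimensional hub, the $\mathbb{Z}/2n\mathbb{Z}$-rotational symmetry of the eigenvalue set keeping the bookkeeping uniform across the $2n$ angular directions. Because the underlying weighted-shift graph then has no directed cycle, $M$ is nilpotent. Two sources of error enter: discretising each circle into the $2n$ equally spaced points $a_k\mathrm{e}^{\pi\mathrm{i}j/n}$ contributes at most $|\mathrm{e}^{\pi\mathrm{i}(j+1)/n}-\mathrm{e}^{\pi\mathrm{i}j/n}| = 2\sin\!\big(\tfrac{\pi}{2n}\big) < \tfrac{\pi}{n}$, and the radial ``re-splicing'' links, whose weights interpolate adjacent radii, contribute at most $\max_{0\le k\le m-1}|a_{k+1}-a_k|$ (the index $k=0$ being the final step $a_1\to 0$ into the hub). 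Summing gives the claimed estimate, and the multiplicity assertion for $\sigma(N)$ follows directly from the definition of $N$ above.

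The step I expect to be the main obstacle is controlling the \emph{operator} norm (rather than the entrywise size) of $M-N$ by the sum of these two step sizes: a crude ``cut one link of weight $\approx a_k$ and reconnect elsewhere'' perturbation costs $\Omega(a_k)$ in operator norm, so the cut of each track must be spread across the extra $n+1$ hub coordinates and across the radial layers so that no single rank-one change is large. The role of the hypothesis $m\ge 2$ (equivalently, of making $\max_k|a_{k+1}-a_k|$ small) and of the ``$+1$'' extra dimension is precisely to provide this slack, which is why a single circle ($m=1$) yields only the trivial bound $\tfrac{\pi}{n}+1\ge\|N\|$. Pinning the constants down to exactly $\tfrac{\pi}{n}$ and $\max_k|a_{k+1}-a_k|$ requires the explicit choice of weights and the precise ordering of basis vectors carried out in Section~2.3.3 of \cite{He1} and Lemma~5.4 of \cite{Sk}, which I would follow verbatim.
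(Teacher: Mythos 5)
Your construction of $N$ is correct and disposes of the easy half of the statement: the operator $\bigl(\bigoplus_{k=1}^{m}a_k U\bigr)\oplus 0_{n+1}$, with $U$ the cyclic shift on $\mathbb{C}^{2n}$, is normal, has exactly the prescribed spectrum with the prescribed multiplicities, and the dimension count $2nm+(n+1)=(2m+1)n+1$ checks out. But the entire content of the lemma is the existence of a nilpotent $M$ with $\left\|M-N\right\|\leq\frac{\pi}{n}+\max_{k}|a_{k+1}-a_k|$, and for that you supply only a plan (``thread the circles into an acyclic spiral'') together with an explicit admission that you cannot control the operator norm of $M-N$ and would follow \cite{He1} and \cite{Sk} ``verbatim.'' You correctly diagnose the obstruction --- cutting the wrap-around edge of the circle of radius $a_k$ and rerouting it is a perturbation of norm comparable to $a_k$, and staggering these cuts across the $m$ circles does not make the total difference small --- but diagnosing the obstacle is not overcoming it. As written, the proposal proves the spectral assertion about $N$ and nothing else; note also that this paper itself does not reprove the lemma but imports it from \cite{Sk}, so there is no shortcut available from the surrounding text.

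The missing idea is Berg's intertwining technique, which is precisely designed so that one never cuts an edge of large weight. In the standard construction underlying Section 2.3.3 of \cite{He1}, the operators $M$ and $N$ are realized in a common basis so that $M-N$ is itself supported on a single (generalized) superdiagonal; its norm is then the maximum modulus of its entries rather than an accumulation over the $m$ circles, and each entry is either an angular increment $|a_k(\mathrm{e}^{\pi\mathrm{i}(j+1)/n}-\mathrm{e}^{\pi\mathrm{i}j/n})|\leq 2\sin(\frac{\pi}{2n})\leq\frac{\pi}{n}$ or a radial increment bounded by $\max_k|a_{k+1}-a_k|$, because the transfer of each circle onto the acyclic path is distributed over all $2n$ angular positions and over the adjacent radial level instead of being concentrated in one rank-one correction. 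The $n+1$ eigenvalues at $0$ are exactly the coordinates consumed in ramping the weights up from $0$ and back down to $0$, which is where the cycle may be broken for free. None of this --- the explicit weight sequence, the unitary intertwiners, and the verification that the errors from distinct levels occupy disjoint entries of a single shift --- appears in your write-up, so the norm estimate, and hence the lemma, remains unproved.
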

The following result was known to Marcoux and was communicated to the author.
\begin{thm}[Marcoux]
\label{UHFnil}
Let $\mathfrak{A}$ be a nonelementary UHF C$^*$-algebra.  There exists an $N \in Nor(\mathfrak{A}) \cap \overline{Nil(\mathfrak{A})}$ such that $\sigma(N)$ is the closed unit disk.
\end{thm}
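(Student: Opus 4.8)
The plan is to realize $N$ as a norm limit of normal matrices sitting inside the finite-dimensional subalgebras of $\mathfrak{A}$, using Lemma \ref{nmctnannoy} to manufacture at each stage a normal matrix with a polar-grid spectrum lying close to a nilpotent, and arranging consecutive stages to be compatible. Write $\mathfrak{A} = \overline{\bigcup_{k \geq 1}\mathcal{M}_{d_k}(\mathbb{C})}$ with $d_k \mid d_{k+1}$; since $\mathfrak{A}$ is nonelementary, $d_k \to \infty$, so after passing to a subsequence (which does not change $\mathfrak{A}$) we may assume $s_k := d_{k+1}/d_k$, and indeed $d_k$ itself, grow as quickly relative to any prescribed sequence as we wish. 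By Proposition \ref{naqniAF} together with the analysis in Remarks \ref{AFremarks}, it suffices to produce normal matrices $N_k \in \mathcal{M}_{d_k}(\mathbb{C})$ such that: (i) $dist(N_k, Nil(\mathcal{M}_{d_k}(\mathbb{C}))) \to 0$; (ii) $\sigma(N_k)$ converges to the closed unit disk $\overline{\mathbb{D}}$ in the Hausdorff metric; and (iii) the eigenvalues of $N_{k+1}$ and those of $N_k \otimes I_{s_k}$ (counted with multiplicity) can be paired so that the largest distance between paired eigenvalues is at most $\delta_k$, with $\sum_k \delta_k < \infty$. Given such $N_k$, choose recursively unitaries $u_k \in \mathcal{M}_{d_k}(\mathbb{C})$ so that the conjugates $\widetilde{N}_k := u_k N_k u_k^*$ satisfy $\|\widetilde{N}_{k+1} - \widetilde{N}_k\| \leq \delta_k$ in $\mathcal{M}_{d_{k+1}}(\mathbb{C})$; this is possible because two normal matrices whose eigenvalue lists are paired within $\delta_k$ can be brought that close by conjugating one of them. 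Then $(\widetilde{N}_k)$ is Cauchy in $\mathfrak{A}$, its limit $N$ is normal with $\sigma(N) = \overline{\mathbb{D}}$ by (ii) (in particular $N \neq 0$), and $N \in \overline{Nil(\mathfrak{A})}$ by (i), since each $\widetilde{N}_k$ lies within $dist(N_k, Nil(\mathcal{M}_{d_k}(\mathbb{C})))$ of a nilpotent in $\mathcal{M}_{d_k}(\mathbb{C}) \subseteq \mathfrak{A}$.

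To build the $N_k$, fix resolution parameters $n_k, m_k \to \infty$ and let $N_k$ be the direct sum of $r_k := \lfloor d_k/((2m_k+1)n_k+1)\rfloor$ copies of the normal matrix produced by Lemma \ref{nmctnannoy} with $n = n_k$ and radii $a_j := \sqrt{j/m_k}$ for $0 \leq j \leq m_k$, together with $z_k := d_k - r_k((2m_k+1)n_k+1)$ further zeros to fill out $\mathcal{M}_{d_k}(\mathbb{C})$; having fixed $n_k, m_k$ first, we then thin $(d_k)$ so that $(2m_k+1)n_k+1 = o(d_k)$, making $z_k/d_k$ as small as desired. The corresponding direct sum of the nilpotents from Lemma \ref{nmctnannoy}, padded by zeros, is a nilpotent $M_k \in \mathcal{M}_{d_k}(\mathbb{C})$ with $\|N_k - M_k\| \leq \pi/n_k + 1/\sqrt{m_k} \to 0$, giving (i). The spectrum of $N_k$ is $\{0\}$ together with the polar grid $\{\sqrt{j/m_k}\,\mathrm{e}^{\pi\mathrm{i}\ell/n_k} : 1 \leq j \leq m_k,\ 1 \leq \ell \leq 2n_k\}$, whose mesh tends to $0$, giving (ii). The choice $a_j = \sqrt{j/m_k}$ is dictated by the requirement that the annuli it cuts out all have equal area: with this choice every nonzero point of the grid carries the same multiplicity $r_k$ and ``owns'' a region of area $\approx \pi/(2n_k m_k)$, so that, apart from the origin (which absorbs only the vanishing fraction $\approx \tfrac{1}{2m_k} + \tfrac{z_k}{d_k}$ of the eigenvalues), the eigenvalue list of $N_k$ is an essentially uniformly distributed collection of $\approx d_k$ points of $\overline{\mathbb{D}}$.

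Property (iii) is the technical core. Since $N_k \otimes I_{s_k}$ has the spectrum of $N_k$ with every multiplicity scaled by $s_k$, its eigenvalue list is, apart from a vanishing fraction at the origin, an essentially uniform collection of $d_k s_k = d_{k+1}$ points of $\overline{\mathbb{D}}$ --- the same cardinality and approximate distribution as the eigenvalue list of $N_{k+1}$. A direct pairing of the two polar grids --- match each ring of $N_{k+1}$ to a ring of $N_k \otimes I_{s_k}$ of nearly equal radius, then pair the points of a ring against the available ones by angle, routing the small surplus near the origin through the abundant zeros there --- shows that the best pairing of the two eigenvalue lists has error $O\!\left(m_k^{-1/2} + n_k^{-1} + (z_k/d_k)^{1/2}\right)$; equivalently, this is a verification of Hall's condition for the existence of such a pairing. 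Choosing $n_k$ and $m_k$ to grow quickly, and then thinning $(d_k)$ so that $z_k/d_k$ decays quickly, makes this quantity summable, which is (iii); assembling the pieces as in the first paragraph produces $N$. The main obstacle is precisely this bookkeeping: making the finer polar grid at stage $k+1$ line up, eigenvalue by eigenvalue with controlled error, against $s_k$ copies of the coarser grid at stage $k$, while simultaneously keeping the grids dense in $\overline{\mathbb{D}}$ and close to nilpotents.
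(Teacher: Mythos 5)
Your proposal is correct in outline and shares the paper's skeleton --- build normal matrices with polar-grid spectra from Lemma \ref{nmctnannoy}, pair eigenvalue lists across consecutive stages, and take unitary conjugates to get a Cauchy sequence, exactly the mechanism of Remarks \ref{AFremarks} --- but it handles the one genuinely delicate step differently. The paper makes the stage-to-stage compatibility \emph{exact} rather than approximate: after arranging that each ratio $\ell_{k+1}/\ell_k = pz$ is composite, it sets $n_{k+1} = p n_k$ and $m_{k+1} = z m_k$ with radii $a_j = j/m_k$, so the stage-$(k+1)$ grid is a refinement of the stage-$k$ grid and the multiplicity $pz$ that each old eigenvalue acquires under the embedding equals precisely the number of new grid points in its cell; the pairing is then immediate, with error $\frac{\pi}{n_k} + \frac{1}{m_k}$, using a single copy of the Lemma \ref{nmctnannoy} matrix per stage and no equidistribution, no Hall's theorem, and no thinning of the tower. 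Your route --- many copies, equal-area radii $\sqrt{j/m_k}$, and a transport-style matching of two approximately uniform point configurations --- is more flexible (it does not need the grids to nest) but shifts all the difficulty onto the matching claim, and there your sketch is too optimistic in one respect: the count discrepancies are \emph{not} concentrated near the origin. They arise throughout the disk, from the rounding in $r_k$ and $r_{k+1}$, from the fact that a band between consecutive radii of the coarse grid contains $\lfloor m_{k+1}/m_k\rfloor$ or $\lceil m_{k+1}/m_k\rceil$ fine rings, and from the differing zero fractions $\approx \tfrac{1}{2m_{k+1}}$ versus $\approx \tfrac{1}{2m_k}$, which force roughly $d_{k+1}/(2m_k)$ nonzero eigenvalues of $N_{k+1}$ to be absorbed by the zero pile of $N_k \otimes I_{s_k}$ --- this is exactly why your error carries the $m_k^{-1/2}$ term, since those must be drawn from the innermost rings. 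A genuine Hall's-condition or monotone-rearrangement verification over the cells, together with a growth condition such as $m_{k+1} \gg m_k$ relative to the other parameters, does close this, so the argument can be completed; but as written it is the paper's divisibility trick, not your equidistribution heuristic, that makes the proof short.
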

\begin{proof}
Write $\mathfrak{A} = \overline{\bigcup_{k \geq 1} \mathcal{M}_{\ell_k}(\mathbb{C})}$ where $\mathcal{M}_{\ell_1}(\mathbb{C}) \stackrel{\alpha_1}{\rightarrow} \mathcal{M}_{\ell_2}(\mathbb{C}) \stackrel{\alpha_2}{\rightarrow} \mathcal{M}_{\ell_3}(\mathbb{C}) \stackrel{\alpha_3}{\rightarrow} \cdots$ is a direct limit of full matrix algebras with $\alpha_k$ injective for all $k \in \mathbb{N}$.  Moreover we can assume that $\frac{\ell_{k+1}}{\ell_k}$ is composite for all $k \in \mathbb{N}$ and $\ell_1 \geq 11$.
\par
For each $k \in \mathbb{N}$ we will construct $n_k, m_k \in \mathbb{N}$ and $q_k \in \mathbb{N} \cup \{0\}$ such that $m_1, n_1 \geq 2$, $(2m_k+1)n_k + 1 + q_k = \ell_k$ for all $k \in \mathbb{N}$, $2m_k \leq m_{k+1}$ for all $k \in \mathbb{N}$, $2n_k \leq n_{k+1}$ for all $k \in \mathbb{N}$, and, if $N_k \in Nor(\mathcal{M}_{(2m_k+1)n_k + 1 + q_k}(\mathbb{C}))$ is a specific unitary conjugate of the normal matrix obtain by taking the direct sum of the $q_k \times q_k$ zero matrix with the normal matrix from Lemma \ref{nmctnannoy} with $n = n_k$, $m = m_k$, and $a_j = \frac{j}{m_k}$ for all $j \in \{0,1,\ldots, m_k\}$ then $(N_k)_{k\geq1}$ is a Cauchy sequence in $\mathfrak{A}$. 
\par
If such a sequence exists then, since $\lim_{k\to \infty} m_k = \infty$ and $\lim_{k\to\infty} n_k = \infty$ and since adding a zero direct summand at most decreases the distance to the nilpotent operators, Lemma \ref{nmctnannoy} implies 
\[
\lim_{k\to \infty} dist(N_k, Nil(\mathcal{M}_{\ell_k}(\mathbb{C}))) = 0.
\]
Thus, if $N = \lim_{k\to\infty} N_k$ then $N \in Nor(\mathfrak{A}) \cap \overline{Nil(\mathfrak{A})}$ by construction.  Since $\left\|N_k\right\| \leq 1$, $\left\|N\right\| \leq 1$.  Since $\lim_{k\to \infty} m_k = \infty$ and $\lim_{k\to\infty} n_k = \infty$, Lemma \ref{nmctnannoy} implies the intersection of $\sigma(N_k)$ with any open subset of the closed unit ball is non-empty for sufficiently large $k$.  This implies $\sigma(N)$ is the closed unit disk by the semicontinuity of the spectrum.
\par
To show that the claim is true, let $m_1 = 2$ and select $n_1 \in \mathbb{N}$ with $n_1 \geq 2$ and $q_1 \in \{0,1,2,3,4\}$ such that $\ell_1 = (2m_1+1)n_1 + 1 + q_1$.  Let $N_1$ be as described above.  
\par
Suppose we have performed the construction for some fixed $k \in \mathbb{N}$.  Since $\frac{\ell_{k+1}}{\ell_k}$ is composite, we may write $\frac{\ell_{k+1}}{\ell_k} = pz$ where $p,z \geq 2$.  Then, when we view $N_k$ as an element of $\mathcal{M}_{\ell_{k+1}}(\mathbb{C})$, each eigenvalue of $N_k$ has $pz$ times the multiplicity it did in $\mathcal{M}_{\ell_k}(\mathbb{C})$.  Let $n_{k+1} := pn_k \geq 2n_k$ and $m_{k+1} := z m_k \geq 2m_k$.  Then 
\[
(2m_{k+1}+1)n_{k+1} + 1 = \ell_{k+1} - ((z-1)pn_k + pz + pzq_k-1).
\]
Thus let $q_{k+1} := ((z-1)pn_k + pz+ pzq_k-1) \geq 0$ so 
\[
(2m_{k+1}+1)n_{k+1} + 1 + q_{k+1} = \ell_{k+1}.
\]
If $N'_{k+1}$ is the normal matrix obtain by taking the direct sum of the $q_k \times q_k$ zero matrix with the normal matrix from Lemma \ref{nmctnannoy} with $n = n_{k+1}$, $m = m_{k+1}$, and $a_j = \frac{j}{m_{k+1}}$ for all $j \in \{0,1,\ldots, m_{k+1}\}$, then, by construction, we can pair the eigenvalues of $N_k$ (including multiplicity) when viewed an element of $\mathcal{M}_{\ell_{k+1}}(\mathbb{C})$ with the eigenvalues of $N'_{k+1}$ in a bijective way such that the difference of any pair is at most $\frac{\pi}{n_k} + \frac{1}{m_k}$ by our knowledge of the eigenvalues from Lemma \ref{nmctnannoy}.  Thus there exists a unitary conjugate $N_{k+1}$ of $N'_{k+1}$ that is within $\frac{\pi}{n_k} + \frac{1}{m_k}$ of the image of $N_k$ in $\mathcal{M}_{\ell_{k+1}}(\mathbb{C})$.  Since $2m_k \leq m_{k+1}$ for all $k \in \mathbb{N}$ and $2n_k \leq n_{k+1}$ for all $k \in \mathbb{N}$, this implies $(N_k)_{k\geq1}$ is a Cauchy sequence in $\mathfrak{A}$ as desired.
\end{proof}
Furthermore, we note that the following can be applied to every UHF C$^*$-algebra by Theorem \ref{UHFnil}.
\begin{thm}[Theorem 5.8 in \cite{Sk}]
\label{movediskaround}
Let $\Omega$ be a non-empty, open, connected and simply connected subset of $\mathbb{C}$ containing zero such that $\partial \Omega$ contains at least two points and is a Jordan curve.  Let $\mathfrak{A}$ be a C$^*$-algebra and suppose that there exists an $N \in Nor(\mathfrak{A}) \cap \overline{Nil(\mathfrak{A})}$ such that $\sigma(N)$ is the closed unit disk.  Then there exists an operator $N_0 \in Nor(\mathfrak{A}) \cap \overline{Nil(\mathfrak{A})}$ with $\sigma(N_0) = \overline{\Omega}$.
\end{thm}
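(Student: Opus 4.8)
The plan is to transport $N$ to an operator with spectrum $\overline{\Omega}$ by applying the continuous functional calculus with a Riemann map of the disk onto $\Omega$, and then to recover membership in $\overline{Nil(\mathfrak{A})}$ by approximating that Riemann map uniformly by polynomials with zero constant term. First I would build the conformal map. Since $\partial\Omega$ is a Jordan curve and $\Omega$ is a connected open set disjoint from $\partial\Omega$ having $\partial\Omega$ as its topological boundary, $\Omega$ must be one of the two components of $\mathbb{C}\setminus\partial\Omega$, and by simple connectedness it is the bounded one; in particular $\Omega$ is bounded, simply connected, and properly contained in $\mathbb{C}$ (the hypotheses on $\partial\Omega$ ensure the degenerate cases with no Riemann map are excluded). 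Hence there is a biholomorphism of the open unit disk $\mathbb{D}$ onto $\Omega$, and precomposing with the M\"obius automorphism of $\mathbb{D}$ carrying $0$ to the preimage of $0\in\Omega$ gives a biholomorphism $\varphi\colon\mathbb{D}\to\Omega$ with $\varphi(0)=0$. Because $\partial\Omega$ is a Jordan curve, Carath\'eodory's boundary extension theorem extends $\varphi$ to a homeomorphism $\varphi\colon\overline{\mathbb{D}}\to\overline{\Omega}$; thus $\varphi$ lies in the disk algebra, $\varphi(\overline{\mathbb{D}})=\overline{\Omega}$, and $\varphi(0)=0$.

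Next I would set $N_0:=\varphi(N)$, defined via the continuous functional calculus. This is legitimate and yields an element of $\mathfrak{A}$ because $N$ is normal with $\sigma(N)=\overline{\mathbb{D}}$, $\varphi$ is continuous on $\overline{\mathbb{D}}$, and $\varphi(0)=0$ (the last point is what is needed when $\mathfrak{A}$ is nonunital). Then $N_0$ is normal, and by the spectral mapping theorem $\sigma(N_0)=\varphi(\sigma(N))=\varphi(\overline{\mathbb{D}})=\overline{\Omega}$.

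It remains to show $N_0\in\overline{Nil(\mathfrak{A})}$. Fix $\epsilon>0$. Since $\varphi$ is in the disk algebra it is a uniform limit of polynomials on $\overline{\mathbb{D}}$, and subtracting the constant terms (which converge to $\varphi(0)=0$) we may choose a polynomial $p$ with $p(0)=0$ and $\sup_{\lambda\in\overline{\mathbb{D}}}|p(\lambda)-\varphi(\lambda)|<\epsilon$; then $\|p(N)-N_0\|=\sup_{\lambda\in\sigma(N)}|p(\lambda)-\varphi(\lambda)|<\epsilon$ by the functional calculus. Now choose $M\in Nil(\mathfrak{A})$ with $\|M-N\|$ small enough that $\|p(M)-p(N)\|<\epsilon$; this is possible because $p$ is a fixed polynomial, $\|N\|=1$, and $X\mapsto p(X)$ is norm continuous on bounded subsets of $\mathfrak{A}$. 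If $M^k=0$, then every monomial occurring in $p(M)^k$ has $M$-degree at least $k$, so $p(M)^k=0$; moreover $p(M)\in\mathfrak{A}$ since $p(0)=0$. Hence $p(M)\in Nil(\mathfrak{A})$ with $\|N_0-p(M)\|<2\epsilon$, and since $\epsilon$ was arbitrary we conclude $N_0\in Nor(\mathfrak{A})\cap\overline{Nil(\mathfrak{A})}$.

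The only nonroutine ingredient is Carath\'eodory's theorem on the homeomorphic extension of a Riemann map to the closure under the Jordan-curve hypothesis, which is precisely what that hypothesis is there to supply; everything else is routine functional-calculus bookkeeping. The single point that must not be overlooked is to work throughout with functions and polynomials vanishing at the origin, both so that all operators in play lie in $\mathfrak{A}$ and so that composition with $p$ preserves nilpotency.
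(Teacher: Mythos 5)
Your proof is correct, and it follows essentially the same route as the cited source: a Riemann map $\varphi\colon\overline{\mathbb{D}}\to\overline{\Omega}$ with $\varphi(0)=0$ (extended via Carath\'eodory using the Jordan-curve hypothesis), the definition $N_0=\varphi(N)$ by continuous functional calculus, and uniform approximation of $\varphi$ by polynomials with zero constant term so that nilpotency is preserved under $M\mapsto p(M)$. The two points you single out --- working only with functions vanishing at the origin, and checking $\Omega$ is the bounded Jordan domain so the Riemann mapping theorem applies --- are exactly the ones that matter, and you handle both correctly.
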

To conclude this section, we will demonstrate that Corollary 6 in \cite{He2} cannot be generalized to AF C$^*$-algebras (it was demonstrated in Section 8 of \cite{Sk} that Corollary 9 in \cite{He2} cannot be generalized to C$^*$-algebras with faithful tracial states).  It is the existence of faithful tracial states on finite dimensional C$^*$-algebras that prevent the generalization of Herrero's result.
\begin{lem}
\label{shiftsinAF}
Let $\mathfrak{A}$ be a unital AF C$^*$-algebra and let $T \in \mathfrak{A}$.  Then each of the following sets is either the empty set or a singleton:
\begin{enumerate}
	\item $\left\{\lambda \in \mathbb{C} \, \mid \, \lambda I_\mathfrak{A} + T \in \overline{Nil(\mathfrak{A})}\right\}$.
	\item $\left\{\lambda \in \mathbb{C} \, \mid \, \lambda I_\mathfrak{A} + T \in \overline{span(Nil(\mathfrak{A}))}\right\}$.
\end{enumerate}
\end{lem}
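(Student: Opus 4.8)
The plan is to reduce both parts to a single statement: in a unital AF C$^*$-algebra $\mathfrak{A}$, if $\mu \in \mathbb{C}$ and $\mu I_\mathfrak{A} \in \overline{span(Nil(\mathfrak{A}))}$, then $\mu = 0$. Granting this, suppose $\lambda_1 I_\mathfrak{A} + T$ and $\lambda_2 I_\mathfrak{A} + T$ both lie in $\overline{Nil(\mathfrak{A})}$ (case (1)) or both lie in $\overline{span(Nil(\mathfrak{A}))}$ (case (2)). In either case both lie in the closed linear subspace $\overline{span(Nil(\mathfrak{A}))}$ — which contains $Nil(\mathfrak{A})$, hence also $\overline{Nil(\mathfrak{A})}$ — so their difference $(\lambda_1 - \lambda_2) I_\mathfrak{A}$ lies in $\overline{span(Nil(\mathfrak{A}))}$; the reduced statement then forces $\lambda_1 = \lambda_2$. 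Hence each of the two sets is empty or a singleton.

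To prove the reduced statement, write $\mathfrak{A} = \overline{\bigcup_{k\geq1}\mathfrak{A}_k}$ with $\mathfrak{A}_1 \subseteq \mathfrak{A}_2 \subseteq \cdots$ an increasing chain of finite-dimensional subalgebras, and suppose $\mu I_\mathfrak{A} \in \overline{span(Nil(\mathfrak{A}))}$. Fix $\epsilon > 0$. First choose $S = \sum_{i=1}^{r} c_i M_i$ with $c_i \in \mathbb{C}$, $M_i \in Nil(\mathfrak{A})$, and $\|\mu I_\mathfrak{A} - S\| < \frac{\epsilon}{2}$. Each $M_i$ belongs to $\overline{Nil(\mathfrak{A})}$, which by Proposition \ref{naqniAF} equals $\overline{\bigcup_{k}Nil(\mathfrak{A}_k)}$, so I can pick nilpotents $M_i' \in Nil(\mathfrak{A}_{k_i})$ with $\sum_i |c_i|\,\|M_i - M_i'\| < \frac{\epsilon}{2}$; setting $S' := \sum_i c_i M_i'$ and $K := \max_i k_i$ gives $S' \in span(Nil(\mathfrak{A}_K))$ and $\|\mu I_\mathfrak{A} - S'\| < \epsilon$.

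The finishing step uses traces on the finite-dimensional pieces. Put $f := 1_{\mathfrak{A}_K}$, a projection in $\mathfrak{A}$ with $f S' f = S'$, and let $\tau$ be a tracial state on $\mathfrak{A}_K$ (e.g.\ a normalized trace on one of its matrix summands, extended by $0$ on the others, so $\tau(f) = 1$). Compressing the estimate by $f$ yields $\|\mu f - S'\| = \|f(\mu I_\mathfrak{A} - S')f\| \le \|\mu I_\mathfrak{A} - S'\| < \epsilon$. Since a nilpotent element of a finite-dimensional C$^*$-algebra is nilpotent in each matrix summand and hence has trace $0$ there, $\tau$ annihilates $Nil(\mathfrak{A}_K)$, so $\tau(S') = 0$ by linearity, while $\tau(\mu f) = \mu\,\tau(f) = \mu$. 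Therefore $|\mu| = |\tau(\mu f - S')| \le \|\mu f - S'\| < \epsilon$, and $\epsilon > 0$ being arbitrary, $\mu = 0$.

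The only point requiring a little care is arranging the perturbed sum $S'$ to lie inside a single finite-dimensional subalgebra $\mathfrak{A}_K$ — immediate once the chain is taken increasing — together with the observation that the resulting bound $|\mu| < \epsilon$ does not depend on $K$. After that the argument is exactly the phenomenon foreshadowed before the lemma: the finite-dimensional building blocks carry tracial states that annihilate all nilpotents but evaluate to $1$ on their own unit, so no nonzero scalar multiple of the identity can be approximated by sums of nilpotents.
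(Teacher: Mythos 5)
Your proof is correct and rests on exactly the same mechanism as the paper's: tracial states on the finite-dimensional subalgebras $\mathfrak{A}_k$ annihilate nilpotents (via Proposition \ref{naqniAF}) but evaluate to a nonzero scalar on $\mu 1_{\mathfrak{A}_k}$. Your reduction of both parts to the single statement ``$\mu I_\mathfrak{A} \in \overline{span(Nil(\mathfrak{A}))}$ implies $\mu = 0$'' via the closed-linear-subspace observation is a tidy way of handling case (2), which the paper simply declares to be ``exactly the same'' as case (1), but the underlying argument is the same.
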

\begin{proof}
We shall only prove the first claim since the proof of the second claim is exactly the same.  Suppose 
\[
\lambda_0 \in \left\{\lambda \in \mathbb{C} \, \mid \, \lambda I_\mathfrak{A} + T \in \overline{Nil(\mathfrak{A})}\right\}
\]
and let $R := \lambda_0 I_\mathfrak{A} + T$.  Thus to show that $\lambda I_\mathfrak{A} + T \notin \overline{Nil(\mathfrak{A})}$ for all $\lambda \in \mathbb{C} \setminus \{\lambda_0\}$ it suffices to show that $\mu I_\mathfrak{A} + R \notin \overline{Nil(\mathfrak{A})}$ for all $\mu \in \mathbb{C} \setminus \{0\}$.
\par 
Since $\mathfrak{A}$ is a unital AF C$^*$-algebra, $\mathfrak{A} = \overline{\bigcup_{k \geq 1} \mathfrak{A}_k}$ where $\mathfrak{A}_1 \stackrel{\alpha_1}{\rightarrow} \mathfrak{A}_2 \stackrel{\alpha_2}{\rightarrow} \mathfrak{A}_3 \stackrel{\alpha_3}{\rightarrow} \cdots$ is a direct limit of finite dimensional C$^*$-algebras with $\alpha_k$ unital and injective for all $k \in \mathbb{N}$.  Therefore there exists $R_k \in \mathfrak{A}_k$ such that $R = \lim_{k\to \infty} R_k$. However, since $R \in \overline{Nil(\mathfrak{A})}$, Proposition \ref{naqniAF} implies that $R = \lim_{k\to\infty} M_k$ where $M_k \in Nil(\mathfrak{A}_k)$ for all $k \in \mathbb{N}$.  Hence $\lim_{k\to\infty}\left\| R_k - M_k\right\| = 0$.  Thus $\lim_{k\to\infty} tr_{\mathfrak{A}_k}(R_k) = 0$ (where $tr_{\mathfrak{A}_k}$ is any faithful tracial state on $\mathfrak{A}_k$) as every nilpotent matrix has zero trace. 
\par
Fix $\mu  \in \mathbb{C}\setminus \{0\}$.   Then $\mu I_\mathfrak{A} + R = \lim_{k\to\infty} \mu I_{\mathfrak{A}_k} + R_k$.  If $\mu I_\mathfrak{A} + R \in \overline{Nil(\mathfrak{A})}$ then the above argument implies that $\lim_{k\to\infty} tr_{\mathfrak{A}_k}(\mu I_{\mathfrak{A}_k} + R_k) = 0$ which is impossible as $\mu \neq 0$ and $\lim_{k\to\infty} tr_{\mathfrak{A}_k}(R_k) = 0$. 
\end{proof}
\begin{cor}
Let $\mathfrak{A}$ be a unital AF C$^*$-algebra.  Then 
\[
I_\mathfrak{A} \notin \overline{span(Nil(\mathfrak{A}))}.
\]
\end{cor}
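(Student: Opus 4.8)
The plan is to obtain this as an immediate consequence of Lemma \ref{shiftsinAF}(2) applied to $T = 0$. First I would note that the zero operator is nilpotent (indeed $0^1 = 0$), so $0 \in Nil(\mathfrak{A}) \subseteq \overline{\mathrm{span}(Nil(\mathfrak{A}))}$; equivalently, writing $0 = 0\cdot I_\mathfrak{A} + T$ with $T = 0$, the scalar $\lambda = 0$ lies in the set
\[
\left\{\lambda \in \mathbb{C} \, \mid \, \lambda I_\mathfrak{A} + T \in \overline{\mathrm{span}(Nil(\mathfrak{A}))}\right\}
\]
for this choice of $T$. By Lemma \ref{shiftsinAF}(2) that set is either empty or a singleton, so it must equal $\{0\}$. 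In particular $1 \notin \{0\}$, which says precisely that $I_\mathfrak{A} = 1\cdot I_\mathfrak{A} + 0 \notin \overline{\mathrm{span}(Nil(\mathfrak{A}))}$, and the corollary follows.

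I do not expect any real obstacle here, since the substance is already contained in Lemma \ref{shiftsinAF}: that lemma's proof exploits that a unital AF C$^*$-algebra is $\overline{\bigcup_{k\geq 1}\mathfrak{A}_k}$ for an increasing sequence of finite dimensional C$^*$-algebras with unital injective connecting maps, that membership in $\overline{\mathrm{span}(Nil(\mathfrak{A}))}$ forces (via Proposition \ref{naqniAF}) approximation by elements of $\mathrm{span}(Nil(\mathfrak{A}_k))$, and that any faithful tracial state on the finite dimensional algebra $\mathfrak{A}_k$ kills every nilpotent matrix and hence all of $\mathrm{span}(Nil(\mathfrak{A}_k))$ — so a nonzero multiple of the identity, having nonzero trace in the limit, cannot be so approximated. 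If one preferred a self-contained argument, one could simply rerun this trace computation with $R_k$ replaced by the finite dimensional approximants of $I_\mathfrak{A}$, but invoking Lemma \ref{shiftsinAF} with $T = 0$ is the cleanest route and is what I would write.
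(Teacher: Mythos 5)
Your proof is correct and is exactly the paper's argument: the paper's entire proof reads ``Note $0 \in \overline{span(Nil(\mathfrak{A}))}$ and apply Lemma \ref{shiftsinAF}.'' You have simply spelled out the application of Lemma \ref{shiftsinAF}(2) with $T=0$ in more detail, which is fine.
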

\begin{proof}
Note $0 \in \overline{span(Nil(\mathfrak{A}))}$ and apply Lemma \ref{shiftsinAF}.  
\end{proof}

\section{C$^*$-Algebras with Dense Subalgebras of Nilpotent Operators}
\label{sec:READSGEN}

In \cite{Re}, Read gave an example of a separable C$^*$-algebra that contains a dense subalgebra consisting entirely of nilpotent operators.  In this section we will use Lemma \ref{mas} and the construction in \cite{Re} to construct an approximately homogeneous (and thus separable, nuclear, and quasidiagonal) C$^*$-algebra that contains dense subalgebra consisting entirely of nilpotent operators.  It will also be demonstrated that there exists an AF C$^*$-algebra with a C$^*$-subalgebra $\mathfrak{D}$ where $\mathfrak{D} = \overline{Nil(\mathfrak{D})}$.  Thus the study of the closure of nilpotent operators in AF C$^*$-algebras is incredibly complex.
\begin{cons}
By Lemma \ref{mas} there exists finite dimensional Hilbert spaces $\{\mathcal{H}_n\}_{n\geq1}$, positive matrices $A_n \in \mathcal{B}(\mathcal{H}_n)$ of norm one, and nilpotent matrices $M_n \in \mathcal{B}(\mathcal{H}_n)$ such that $\sum_{n\geq1} \left\|A_n - M_n\right\| < \infty$.  Since each $A_n$ is a positive matrix with norm one, there exists unit vectors $\xi_n \in \mathcal{H}_n$ such that $A_n\xi_n =\xi_n$ for all $n \in \mathbb{N}$.
\par
We will use $\{\mathcal{H}_n\}_{n\geq1}$ and $\{\xi_n\}_{n\geq1}$ to generalize Read's construction.  Consider the sequence of pointed Hilbert spaces $(\mathcal{H}_n, \xi_n)$.  For each $n < m$ define $\phi_{n,m} : \otimes^n_{k=1} \mathcal{H}_k \to \otimes^m_{k=1} \mathcal{H}_k$ such that
\[
\phi_{n,m}(\eta_1 \otimes \eta_2 \otimes \cdots \otimes \eta_n) = \eta_1 \otimes \eta_2 \otimes \cdots \otimes \eta_n \otimes \xi_{n+1} \otimes \xi_{n+2} \otimes \cdots \otimes \xi_m.
\]
Let $\mathcal{K} := \otimes^\infty_{k=1} \mathcal{H}_k$ be the completion of the direct limit of the nested sequence of Hilbert spaces $\otimes^n_{k=1} \mathcal{H}_k$ with the connecting maps $\phi_{n,m}$.  Since each $\mathcal{H}_k$ is separable, each $\otimes^n_{k=1} \mathcal{H}_k$ is separable and thus $\mathcal{K}$ is separable.  Let $\phi_n : \otimes^n_{k=1} \mathcal{H}_k\to\mathcal{K}$ be the natural inclusion.
\par
We will maintain the above notation throughout the rest of this section.
\end{cons}
The following are new versions of Lemma 0.3 and Corollary 0.4 in \cite{Re} respectively that will serve our purposes.  We omit the proofs as they follow as in \cite{Re}.
\begin{lem}
\label{lem0.3}
Let $(S_n)_{n\geq1}$ be a sequence of operators with $S_n \in \mathcal{B}(\mathcal{H}_n)$ such that 
\[
C := \prod_{n\geq1} \max\{\left\|S_n\right\|, 1\} < \infty \,\,\,\,\,\mbox{ and }\,\,\,\,\,\sum_{n\geq1} \left\|S_n\xi_n - \xi_n\right\| < \infty.
\]
Then there exists a unique operator $S' \in \mathcal{B}(\mathcal{K})$ such that $S'(\phi_n \zeta) = S'_n \zeta$ for each $\zeta \in \otimes^n_{k=1} \mathcal{H}_k$ where $S'_n := \lim_{m\to\infty} S'_{n,m}$ where, for each $m > n$, $S'_{n,m} : \otimes^n_{k=1} \mathcal{H}_k \to \mathcal{K}$ is defined by
\[
S'_{n,m} = \phi_m \circ (\otimes^m_{i=1} S_i) \circ \phi_{n,m}.
\]
We will use $\otimes^\infty_{n=1} S_n$ to denote $S'$.
\end{lem}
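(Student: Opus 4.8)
The plan is to carry out the standard ``stabilization'' argument for infinite tensor products, using the hypothesis $C<\infty$ to control the accumulated operator norms and the hypothesis $\sum_{n\geq 1}\left\|S_n\xi_n-\xi_n\right\|<\infty$ to force the relevant limits to exist. First I would fix $n\in\mathbb{N}$ and $\zeta\in\otimes^n_{k=1}\mathcal{H}_k$ and show that the sequence $\left(S'_{n,m}\zeta\right)_{m>n}$ is Cauchy in $\mathcal{K}$. Unwinding the definitions, $\phi_{n,m}\zeta=\zeta\otimes\xi_{n+1}\otimes\cdots\otimes\xi_m$, so $\left(\otimes^m_{i=1}S_i\right)\phi_{n,m}\zeta=\left(\left(\otimes^n_{i=1}S_i\right)\zeta\right)\otimes S_{n+1}\xi_{n+1}\otimes\cdots\otimes S_m\xi_m$; applying $\phi_m=\phi_{m+1}\circ\phi_{m,m+1}$ to view this vector inside $\otimes^{m+1}_{k=1}\mathcal{H}_k$ and comparing with the $(m+1)$st term gives the telescoping identity
\[
S'_{n,m+1}\zeta-S'_{n,m}\zeta=\phi_{m+1}\!\left[\left(\left(\otimes^n_{i=1}S_i\right)\zeta\right)\otimes S_{n+1}\xi_{n+1}\otimes\cdots\otimes S_m\xi_m\otimes\left(S_{m+1}\xi_{m+1}-\xi_{m+1}\right)\right].
\]
Since $\phi_{m+1}$ is isometric and the norm of an elementary tensor is the product of the norms of its factors, and since $\left\|S_i\right\|$ and $\left\|S_i\xi_i\right\|$ are each at most $\max\{\left\|S_i\right\|,1\}$, this yields
\[
\left\|S'_{n,m+1}\zeta-S'_{n,m}\zeta\right\|\leq C\left\|\zeta\right\|\left\|S_{m+1}\xi_{m+1}-\xi_{m+1}\right\|.
\]
As $\sum_{n\geq1}\left\|S_n\xi_n-\xi_n\right\|<\infty$, the tails of this series vanish, so $\left(S'_{n,m}\zeta\right)_m$ is Cauchy and $S'_n\zeta:=\lim_{m\to\infty}S'_{n,m}\zeta$ exists; being a pointwise limit of the linear maps $S'_{n,m}$, the map $S'_n$ is linear, and the isometry of $\phi_m$ together with $\left\|\phi_{n,m}\zeta\right\|=\left\|\zeta\right\|$ gives $\left\|S'_{n,m}\zeta\right\|\leq\left(\prod^m_{i=1}\left\|S_i\right\|\right)\left\|\zeta\right\|\leq C\left\|\zeta\right\|$, hence $\left\|S'_n\zeta\right\|\leq C\left\|\zeta\right\|$.

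Next I would verify that the family $\{S'_n\}_{n\geq1}$ is compatible with the connecting maps, namely $S'_{n+1}\circ\phi_{n,n+1}=S'_n$. This is a direct computation: for $m>n+1$ one has $\phi_{n+1,m}(\phi_{n,n+1}\zeta)=\phi_{n,m}\zeta$, so $S'_{n+1,m}(\phi_{n,n+1}\zeta)=S'_{n,m}\zeta$ for every such $m$, and letting $m\to\infty$ gives the claim. Consequently $S'(\phi_n\zeta):=S'_n\zeta$ is a well-defined linear map on the algebraic direct limit $\bigcup_{n\geq1}\phi_n\!\left(\otimes^n_{k=1}\mathcal{H}_k\right)$, which is dense in $\mathcal{K}$. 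The uniform estimate $\left\|S'_n\zeta\right\|\leq C\left\|\zeta\right\|=C\left\|\phi_n\zeta\right\|$ shows $S'$ is bounded (by $C$) on this dense subspace, so it extends uniquely to a bounded operator $S'\in\mathcal{B}(\mathcal{K})$ satisfying the required identity; uniqueness of $S'$ is then automatic, since a bounded operator is determined by its restriction to a dense subspace.

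The only genuinely delicate point is the telescoping identity together with the accompanying Cauchy estimate; everything else (linearity, the compatibility relation, the passage to a bounded operator, and uniqueness) is routine bookkeeping. Once one sees that inserting the $(m+1)$st tensor factor perturbs $S'_{n,m}\zeta$ by exactly the term containing $S_{m+1}\xi_{m+1}-\xi_{m+1}$, the hypothesis $C<\infty$ uniformly bounds all the accumulated norm factors and the summability hypothesis forces convergence. I expect the argument to be essentially identical to Read's proof of Lemma~0.3 in \cite{Re}, carried out with the specific pointed Hilbert spaces $(\mathcal{H}_n,\xi_n)$ from the Construction above in place of his, which is why it is omitted here.
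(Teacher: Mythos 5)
Your argument is correct and is precisely the stabilization argument of Read's Lemma~0.3 that the paper itself invokes without proof; the telescoping identity, the $C$-uniform bound, the compatibility with the connecting isometries, and the extension by density are all handled properly. Nothing further is needed.
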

\begin{cor}
\label{tensorapprox}
Let $S' = \otimes^\infty_{n=1} S_n$ and $R' = \otimes^\infty_{n=1} R_n$ be elements of $\mathcal{B}(\mathcal{K})$ as constructed in Lemma \ref{lem0.3}.  Then 
\[
\left\|S' - R'\right\| \leq C_S C_R \sum_{n\geq1} \left\|S_n - R_n\right\|
\]
where 
\[
C_S := \prod_{n\geq1} \max\{1, \left\|S_n\right\|\} \,\,\,\,\,\mbox{ and }\,\,\,\,\, C_R := \prod_{n\geq1} \max\{1, \left\|R_n\right\|\}.
\]
\end{cor}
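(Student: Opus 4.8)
The plan is to reduce the whole statement to a telescoping norm estimate for \emph{finite} tensor products and then transport that estimate to $\mathcal{K}$ using the fact that all of the maps $\phi_{n,m}$, $\phi_m$ and $\phi_n$ are isometries (they append only the unit vectors $\xi_k$). Since $S'$ and $R'$ are already known to lie in $\mathcal{B}(\mathcal{K})$ by Lemma \ref{lem0.3}, it suffices to bound $\left\|(S' - R')\eta\right\|$ for $\eta$ ranging over the dense subspace $\bigcup_{n\geq 1}\phi_n\left(\otimes^n_{k=1}\mathcal{H}_k\right)$ of $\mathcal{K}$.

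First I would record the standard telescoping identity: for bounded operators $A_i,B_i$ on $\mathcal{H}_i$,
\[
\left(\otimes^m_{i=1} A_i\right) - \left(\otimes^m_{i=1} B_i\right) = \sum^m_{j=1}\left(\otimes^{j-1}_{i=1} B_i\right)\otimes(A_j - B_j)\otimes\left(\otimes^m_{i=j+1} A_i\right),
\]
obtained by inserting and cancelling the partial products $\left(\otimes^{j}_{i=1}B_i\right)\otimes\left(\otimes^{m}_{i=j+1}A_i\right)$. Taking norms and using that the operator norm is multiplicative on tensor products,
\[
\left\|\left(\otimes^m_{i=1} S_i\right) - \left(\otimes^m_{i=1} R_i\right)\right\| \leq \sum^m_{j=1}\left(\prod^{j-1}_{i=1}\left\|R_i\right\|\right)\left\|S_j - R_j\right\|\left(\prod^m_{i=j+1}\left\|S_i\right\|\right) \leq C_R C_S\sum_{n\geq 1}\left\|S_n - R_n\right\|,
\]
where the last step uses $\prod^{j-1}_{i=1}\left\|R_i\right\| \leq \prod_{i\geq 1}\max\{1,\left\|R_i\right\|\} = C_R$ and likewise $\prod^m_{i=j+1}\left\|S_i\right\| \leq C_S$; finiteness of $C_S$ and $C_R$ is exactly the standing hypothesis carried over from Lemma \ref{lem0.3}.

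Next I would descend to $\mathcal{K}$. For $m > n$ the defining formula gives $S'_{n,m} - R'_{n,m} = \phi_m\circ\left(\left(\otimes^m_{i=1}S_i\right) - \left(\otimes^m_{i=1}R_i\right)\right)\circ\phi_{n,m}$, and since $\phi_m$ and $\phi_{n,m}$ are isometries the previous display yields $\left\|S'_{n,m} - R'_{n,m}\right\| \leq C_R C_S\sum_{n\geq 1}\left\|S_n - R_n\right\|$ uniformly in $m$. Letting $m\to\infty$ (using $S'_n = \lim_m S'_{n,m}$, $R'_n = \lim_m R'_{n,m}$) and then that $\phi_n$ is an isometry, we obtain for every $\zeta\in\otimes^n_{k=1}\mathcal{H}_k$
\[
\left\|(S' - R')(\phi_n\zeta)\right\| = \left\|S'_n\zeta - R'_n\zeta\right\| \leq C_R C_S\left(\sum_{n\geq 1}\left\|S_n - R_n\right\|\right)\left\|\phi_n\zeta\right\|.
\]
As $\bigcup_{n\geq 1}\phi_n\left(\otimes^n_{k=1}\mathcal{H}_k\right)$ is dense in $\mathcal{K}$ and $S' - R'$ is bounded, this estimate extends by continuity to all of $\mathcal{K}$, which is the asserted inequality.

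I do not expect a genuine obstacle here: the argument is essentially bookkeeping. The only points needing care are checking that each $\phi_{n,m}$ (hence each $\phi_n$) really is isometric — which holds precisely because these maps append exactly the unit vectors $\xi_k$ — and that the passage $m\to\infty$ may be interchanged with the difference, which is legitimate because the finite-stage bound is uniform in $m$.
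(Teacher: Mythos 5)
Your argument is correct, and it is essentially the standard telescoping proof that the paper omits (it defers to Corollary 0.4 of Read's paper, which proceeds in the same way: telescope the finite tensor products, bound the partial products by $C_S$ and $C_R$, and transport the uniform-in-$m$ estimate to $\mathcal{K}$ via the isometries $\phi_{n,m}$ and $\phi_n$ and density). The only blemish is notational: you reuse $n$ both as the level of the dense subspace and as the summation index in $\sum_{n\geq 1}\left\|S_n - R_n\right\|$, which should be disambiguated but does not affect the argument.
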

\begin{cons}
\label{readcon}
Let $\mathfrak{B}$ be the C$^*$-subalgebra of $\mathcal{B}(\mathcal{K})$ generated by all operators of the form $\otimes^\infty_{n=1} S_n$ given by Lemma \ref{lem0.3}. Let $\mathcal{E}_A$ be subset of $\mathfrak{B}$ containing all operators of the form $\otimes^\infty_{n=1} S_n$ from Lemma \ref{lem0.3} such that there exist a $k \in \mathbb{N}$ such that $S_n = A_n$ for each $n\geq k$.  Since $\sum_{n\geq1} \left\|A_n\xi_n - \xi_n\right\| =0$ and $\left\|A_n\right\| = 1$ for all $n \in \mathbb{N}$, $\mathcal{E}_A$ is non-empty.  Let $\mathfrak{C}$ be the C$^*$-algebra generated by $\mathcal{E}_A$.  Note that $\mathcal{E}_A$ is a self-adjoint set so $\mathfrak{C}$ is the closure of the algebra generated by $\mathcal{E}_A$. 
\end{cons}
\begin{lem}
\label{propertiesofread}
The C$^*$-algebra $\mathfrak{C}$ from Construction \ref{readcon} is nuclear, quasidiagonal, approximately homogeneous, and separable.
\end{lem}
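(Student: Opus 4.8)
The plan is to exhibit $\mathfrak{C}$ as an inductive limit of homogeneous C$^*$-algebras with injective connecting $*$-homomorphisms, after which separability, nuclearity and quasidiagonality will follow from standard permanence properties.

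First I would fix $k \in \mathbb{N}$ and, using that each $\mathcal{H}_n$ is finite dimensional, decompose $\mathcal{K}$ compatibly with the direct system defining it: since tensoring by a fixed finite dimensional Hilbert space commutes with the relevant direct limit and completion, one gets $\mathcal{K} \cong \mathcal{F}_k \otimes \mathcal{K}^{(k)}$, where $\mathcal{F}_k := \mathcal{H}_1 \otimes \cdots \otimes \mathcal{H}_{k-1}$ is finite dimensional of dimension $d_k$ (with $d_1 = 1$) and $\mathcal{K}^{(k)}$ is the infinite tensor product of the pointed Hilbert spaces $(\mathcal{H}_n, \xi_n)_{n \geq k}$, formed exactly as in the Construction preceding Lemma \ref{lem0.3}. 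Since $\left\|A_n\right\| = 1$ and $A_n \xi_n = \xi_n$ for every $n$, Lemma \ref{lem0.3} applied to the tail sequence $(A_n)_{n \geq k}$ produces a positive contraction $A^{(k)} := \otimes^\infty_{n = k} A_n \in \mathcal{B}(\mathcal{K}^{(k)})$, and I would check that under the above identification the operator $\otimes^\infty_{n = 1} S_n$ of Lemma \ref{lem0.3} having $S_n = A_n$ for all $n \geq k$ corresponds to $(S_1 \otimes \cdots \otimes S_{k-1}) \otimes A^{(k)}$.

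Next I would let $\mathfrak{C}_k \subseteq \mathfrak{C}$ denote the C$^*$-subalgebra generated by those elements of $\mathcal{E}_A$ with $S_n = A_n$ for all $n \geq k$. For such elements the hypotheses of Lemma \ref{lem0.3} impose no constraint on $S_1, \ldots, S_{k-1}$, so by the previous step the generating set of $\mathfrak{C}_k$ has linear span equal to $\left\{B \otimes A^{(k)} \, \mid \, B \in \mathcal{M}_{d_k}(\mathbb{C})\right\}$ (elementary tensors span $\mathcal{B}(\mathcal{F}_k) \cong \mathcal{M}_{d_k}(\mathbb{C})$). Taking products and linear combinations, and using that $\mathcal{M}_{d_k}(\mathbb{C})$ is finite dimensional, I would identify $\mathfrak{C}_k$ with $\mathcal{M}_{d_k}(\mathbb{C}) \otimes C^*(A^{(k)}) \cong \mathcal{M}_{d_k}(C^*(A^{(k)}))$. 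Since $A^{(k)}$ is a positive operator with $\sigma(A^{(k)}) \subseteq [0,1]$, the algebra $C^*(A^{(k)})$ is separable and commutative --- it is $C_0(\sigma(A^{(k)}) \setminus \{0\})$, or $C(\sigma(A^{(k)}))$ when $A^{(k)}$ is invertible --- so each $\mathfrak{C}_k$ is a separable homogeneous C$^*$-algebra. I expect this identification to be the main point requiring care, the content being that the tail factor contributes nothing beyond the single positive operator $A^{(k)}$.

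Finally I would note that any generator of $\mathfrak{C}_k$ is also a generator of $\mathfrak{C}_{k+1}$ (take $S_k = A_k$ there), so $\mathfrak{C}_k \subseteq \mathfrak{C}_{k+1}$; since $\bigcup_{k \geq 1} \mathfrak{C}_k$ is a $*$-subalgebra of $\mathfrak{C}$ containing $\mathcal{E}_A$, we obtain $\mathfrak{C} = \overline{\bigcup_{k \geq 1} \mathfrak{C}_k}$, which exhibits $\mathfrak{C}$ as an inductive limit of the homogeneous C$^*$-algebras $\mathfrak{C}_k$ with injective connecting maps. In particular $\mathfrak{C}$ is approximately homogeneous and, being a countable inductive limit of separable C$^*$-algebras, is separable. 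Each $\mathfrak{C}_k$ is type I, hence nuclear, and nuclearity passes to inductive limits, so $\mathfrak{C}$ is nuclear; and each $\mathfrak{C}_k$, being a matrix algebra over a commutative C$^*$-algebra, is quasidiagonal, while quasidiagonality is preserved under inductive limits with injective connecting maps (combine Voiculescu's local characterization of quasidiagonality with Arveson's extension theorem; see \cite{BO}), so $\mathfrak{C}$ is quasidiagonal.
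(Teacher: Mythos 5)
Your proof is correct and follows essentially the same route as the paper: the paper likewise defines $\mathfrak{C}_k$ as the C$^*$-subalgebra generated by the tensors with $S_n = A_n$ beyond stage $k$, identifies it as $\mathcal{B}(\mathcal{H}_1)\otimes_{\min}\cdots\otimes_{\min}\mathcal{B}(\mathcal{H}_k)\otimes_{\min}\mathfrak{A}_{k+1}$ with $\mathfrak{A}_{k+1}$ the abelian C$^*$-algebra generated by the tail tensor $\otimes^\infty_{n=k+1}A_n$ (your $\mathcal{M}_{d_k}(C^*(A^{(k)}))$, up to an immaterial index shift), and concludes by exhibiting $\mathfrak{C}$ as the inductive limit of these homogeneous algebras. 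Your write-up simply supplies more of the routine verifications that the paper leaves implicit.
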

\begin{proof}
For each $k \in \mathbb{N}$ let $\mathfrak{C}_k$ be the C$^*$-subalgebra of $\mathfrak{C}$ generated by all operators of the form $\otimes^\infty_{n=1} S_n$ from Lemma \ref{lem0.3} such that $S_n = A_n$ for all $n > k$.  Then $\mathfrak{C}_k$ is isomorphic to $\mathcal{B}(\mathcal{H}_1) \otimes_{\min} \cdots \otimes_{\min} \mathcal{B}(\mathcal{H}_k) \otimes_{\min} \mathfrak{A}_{k+1}$ where $\mathfrak{A}_{k+1}$ is the abelian C$^*$-algebra generated by the infinite tensor $\otimes^\infty_{n=k+1} S_n$ where $S_n = A_n$ for all $n> k$.  Since $\mathfrak{C}$ is the inductive limit of the C$^*$-algebras $\mathfrak{C}_k$, the result follows. 
\end{proof}
\begin{thm}
\label{readnil}
The C$^*$-algebra $\mathfrak{C}$ from Construction \ref{readcon} has a dense subalgebra $\mathfrak{N}$ such that every operator of $\mathfrak{N}$ is nilpotent.
\end{thm}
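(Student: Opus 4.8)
The plan is to realize $\mathfrak{N}$ as the (non-closed) subalgebra of $\mathfrak{C}$ generated by a nilpotent analogue of the generating set $\mathcal{E}_A$. Let $\mathcal{E}_M$ denote the set of all operators $\otimes^\infty_{n=1}S_n$ of the kind produced by Lemma \ref{lem0.3} for which there exists a $k\in\mathbb{N}$ with $S_n=M_n$ for every $n\ge k$ (the operators $S_1,\dots,S_{k-1}$ being arbitrary). Since $\|M_n\|\le 1+\|A_n-M_n\|$ and $\|M_n\xi_n-\xi_n\|=\|M_n\xi_n-A_n\xi_n\|\le\|A_n-M_n\|$, the hypotheses of Lemma \ref{lem0.3} hold because $\sum_{n\ge1}\|A_n-M_n\|<\infty$, so each element of $\mathcal{E}_M$ is a well-defined bounded operator on $\mathcal{K}$. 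Let $\mathfrak{N}$ be the subalgebra of $\mathcal{B}(\mathcal{K})$ generated by $\mathcal{E}_M$, that is, the linear span of all finite products of elements of $\mathcal{E}_M$.

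First I would check that $\mathcal{E}_M\subseteq\mathfrak{C}$, so that $\mathfrak{N}\subseteq\mathfrak{C}$. Given $S=\otimes^\infty_{n=1}S_n\in\mathcal{E}_M$ with $S_n=M_n$ for $n\ge k$, for each $\ell\ge k$ form $S^{(\ell)}:=\otimes^\infty_{n=1}R_n$ with $R_n=S_n$ for $n<k$, $R_n=M_n$ for $k\le n\le\ell$, and $R_n=A_n$ for $n>\ell$. Each $S^{(\ell)}$ has tail eventually equal to $A_n$, hence lies in $\mathcal{E}_A\subseteq\mathfrak{C}$; and Corollary \ref{tensorapprox}, together with the convergence of $\prod_{n\ge1}(1+\|A_n-M_n\|)$, yields $\|S-S^{(\ell)}\|\le C\sum_{n>\ell}\|A_n-M_n\|$ for a finite constant $C$ independent of $\ell$. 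Thus $S=\lim_{\ell\to\infty}S^{(\ell)}\in\mathfrak{C}$.

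Next I would show every element of $\mathfrak{N}$ is nilpotent. The infinite tensor construction is multiplicative, $(\otimes^\infty_{n=1}S_n)(\otimes^\infty_{n=1}R_n)=\otimes^\infty_{n=1}(S_nR_n)$ (this follows as in \cite{Re}), so a product of $r$ elements of $\mathcal{E}_M$ again has the form $\otimes^\infty_{n=1}P_n$ with $P_n=M_n^{\,r}$ for all sufficiently large $n$. Consequently a typical element of $\mathfrak{N}$ can be written $\sum_i\lambda_i\,\otimes^\infty_{n=1}P^{(i)}_n$, where there is a $K\in\mathbb{N}$ such that $P^{(i)}_n=M_n^{\,r_i}$ with $r_i\ge1$ for all $n\ge K$ and all $i$. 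Put $n^*:=K$ and let $N$ be the nilpotency index of $M_{n^*}$. Expanding, $\bigl(\sum_i\lambda_i\,\otimes^\infty_{n=1}P^{(i)}_n\bigr)^N=\sum_{i_1,\dots,i_N}\lambda_{i_1}\cdots\lambda_{i_N}\,\otimes^\infty_{n=1}\bigl(P^{(i_1)}_n\cdots P^{(i_N)}_n\bigr)$, and the $n^*$-th tensor factor of each summand equals $M_{n^*}^{\,r_{i_1}+\cdots+r_{i_N}}$, an exponent at least $N$, hence $0$; since an operator $\otimes^\infty_{n=1}T_n$ with a single factor equal to $0$ is itself $0$ (immediate from Lemma \ref{lem0.3}), the $N$-th power vanishes. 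The key point — and the reason the unbounded growth of the nilpotency indices of the $M_n$ causes no difficulty — is that only one tensor factor must be annihilated, so it suffices to use the fixed finite index of $M_{n^*}$.

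Finally I would prove $\mathfrak{N}$ is dense in $\mathfrak{C}$. By Construction \ref{readcon}, $\mathfrak{C}$ is the closure of the algebra generated by $\mathcal{E}_A$, and $\overline{\mathfrak{N}}$ is a closed subalgebra of $\mathfrak{C}$, so it is enough to show $\mathcal{E}_A\subseteq\overline{\mathfrak{N}}$. Given $T=\otimes^\infty_{n=1}S_n\in\mathcal{E}_A$ with $S_n=A_n$ for $n\ge k_0$, for $\ell\ge k_0$ set $T_\ell:=\otimes^\infty_{n=1}R_n$ with $R_n=S_n$ for $n<k_0$, $R_n=A_n$ for $k_0\le n\le\ell$, and $R_n=M_n$ for $n>\ell$. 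Then $T_\ell\in\mathcal{E}_M\subseteq\mathfrak{N}$, and, exactly as in the second step, $\|T-T_\ell\|\le C'\sum_{n>\ell}\|A_n-M_n\|\to0$ for a finite constant $C'$ independent of $\ell$. Hence $T\in\overline{\mathfrak{N}}$, so $\overline{\mathfrak{N}}=\mathfrak{C}$ and $\mathfrak{N}$ is the desired dense subalgebra consisting of nilpotent operators. The only genuinely delicate ingredient is the multiplicativity of the infinite tensor construction and the accompanying bookkeeping showing that past a fixed index every tensor factor of an element of $\mathfrak{N}$ lies in $M_n\mathcal{B}(\mathcal{H}_n)$; the remaining estimates are routine applications of Lemma \ref{lem0.3} and Corollary \ref{tensorapprox} in the presence of $\sum_{n\ge1}\|A_n-M_n\|<\infty$.
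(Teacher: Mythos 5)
Your proposal is correct and follows essentially the same route as the paper: the same generating set (your $\mathcal{E}_M$ is the paper's $\mathcal{E}_N$), the same tail-swapping approximations via Corollary \ref{tensorapprox} and the summability of $\sum_{n\ge 1}\|A_n-M_n\|$ to prove containment in $\mathfrak{C}$ and density, and the same nilpotency argument that kills a single fixed tensor factor $M_{n^*}$ whose exponent grows with each multiplication. Your bookkeeping of the exponents $r_i$ is in fact slightly more explicit than the paper's.
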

\begin{proof}
This proof is nearly identical to that of Theorem 1.2 in \cite{Re} where the only changes are our simple modifications.  Let $\mathcal{E}_N$ be the subset of $\mathfrak{B}$ consisting of all operators of the form $\otimes^\infty_{n=1} S_n$ from Lemma \ref{lem0.3} such that there exist a $k \in \mathbb{N}$ such that $S_n = M_n$ for all $n\geq k$.  Let $\mathfrak{N}$ be the (not necessarily closed nor self-adjoint) subalgebra of $\mathfrak{B}$ generated by $\mathcal{E}_N$.  It suffices to show three things: (1) $\mathfrak{N} \subseteq \mathfrak{C}$; (2) $\mathfrak{N}$ is dense in $\mathfrak{C}$; (3) every operator of $\mathfrak{N}$ is nilpotent.
\par
Proof of (1):  It suffices to show that $\mathcal{E}_N \subseteq \mathfrak{C}$.  To begin we will show that $\mathfrak{N}$ is not empty.  Suppose that $(S_n)_{n\geq1}$ is a sequence of operators where $S_n \in \mathcal{B}(\mathcal{H}_n)$ for all $n\in\mathbb{N}$ and $S_n = M_n$ for all $n \geq k$ for some fixed $k \in \mathbb{N}$.  Since $\left\|A_n\right\| = 1$ for all $n\in \mathbb{N}$ and $\sum_{n\geq1} \left\|M_n - A_n\right\| < \infty$, $\sum_{n\geq1} |\left\|S_n\right\| - 1| < \infty$ and so $\prod_{n\geq1} \max\{1, \left\|S_n\right\|\} < \infty$.  Moreover, since $\sum_{n\geq1} \left\|A_n\xi_n - \xi_n\right\| = 0$,
\[
\sum_{n\geq k} \left\|S_n \xi_n - \xi_n\right\| \leq \sum_{n\geq k} \left\|M_n - A_n\right\|  + \sum_{n\geq k}  \left\|A_n\xi_n - \xi_n\right\| < \infty.
\]
Hence Lemma \ref{lem0.3} implies that the operator $\otimes^\infty_{n=1} S_n$ exists.  Hence $\mathfrak{N}$ is not empty.
\par
Fix a sequence $(S_n)_{n\geq1}$ of operators where $S_n \in \mathcal{B}(\mathcal{H}_n)$ for all $n\in\mathbb{N}$ and $S_n = M_n$ for all $n \geq k$.  For each $m \geq k$ define $R_m := \left(\otimes^m_{n=1} S_n\right) \otimes \left(\otimes^\infty_{n=m+1} A_n\right)$.  Then $\{R_m\}_{m\geq k} \subseteq \mathcal{E}_A$ by construction and, by Corollary \ref{tensorapprox},
\[
\left\|\otimes^\infty_{n=1} S_n - R_m\right\| \leq \left(\prod_{n\geq1} \max\{\left\|S_n\right\|, 1\}\right)^2 \sum_{n\geq m+1} \left\|A_n - M_n\right\|.
\]
Therefore, since $\lim_{m\to\infty} \sum_{n\geq m+1} \left\|A_n - M_n\right\|= 0$, $\otimes^\infty_{n=1} S_n$ is in the closure of $\{R_m\}_{m\geq k}$ and thus $\otimes^\infty_{n=1} S_n \in \mathfrak{C}$.  Hence $\mathfrak{N}\subseteq \mathfrak{C}$ as desired.
\par
Proof of (2): It suffices to show that $\mathcal{E}_A$ is in the closure of $\mathfrak{N}$ since $\mathfrak{C}$ is the closure of the algebra (and not $^*$-algebra) generated by $\mathcal{E}_A$.  Fix an operator $T:= \left(\otimes^k_{n=1} S_n\right) \otimes \left(\otimes^\infty_{n=k+1} A_n\right) \in \mathcal{E}_A$.  For each $m \geq k$ let $R_m := \left(\otimes^m_{n=1} S_n\right) \otimes \left(\otimes^\infty_{n=m+1} M_n\right)$.  Then $\{R_m\}_{m\geq k} \subseteq \mathfrak{N}$ and, by Corollary \ref{tensorapprox}, $\left\|T - R_m\right\|$ is at most
\[
\left( \prod^k_{n=1} \max\{\left\|S_n\right\|, 1\}\right)^2 \left( \prod_{n\geq1} \max\{\left\|M_n\right\|, 1\}\right) \sum_{n\geq m+1} \left\|A_n - M_n\right\|.
\]
Therefore, since $\lim_{m\to\infty} \sum_{n\geq m+1} \left\|A_n - M_n\right\|= 0$, $T \in \overline{\mathfrak{N}}$.  Hence $\mathcal{E}_A$ is in the closure of $\mathfrak{N}$ so $\mathfrak{N}$ is dense in $\mathfrak{C}$.
\par
Proof of (3): Notice that every operator $N$ of $\mathfrak{N}$ can be written in the form
\[
N = \sum^\ell_{k=1} S_k \otimes \left(\otimes^\infty_{i=n+1} M_i^k\right)
\]
for some $n, \ell \in \mathbb{N}$ and $S_1,\ldots, S_\ell \in \mathcal{B}\left(\otimes^n_{k=1} \mathcal{H}_k\right)$.  Therefore, since there exists an $m_{n+1} \in \mathbb{N}$ such that $M^{m_{n+1}}_{n+1} = 0$, $N^{m_{n+1}} = 0$ (by the trivial computation that $\left(\otimes^\infty_{n=1} R_n\right)\left(\otimes^\infty_{n=1} R'_n\right) = \otimes^\infty_{n=1} R_nR'_n$).  Hence $N$ is nilpotent so every operator of $\mathfrak{N}$ is nilpotent.  
\end{proof}
One interesting consequence is the following which is quite surprising since every other C$^*$-algebra $\mathfrak{A}$ with $Nor(\mathfrak{A}) \cap \overline{Nil(\mathfrak{A})} \neq \{0\}$ studied in this paper and in \cite{Sk} has had a plethora of projections.
\begin{cor}
Let $\mathfrak{C}$ be the C$^*$-algebra from Construction \ref{readcon}.  Then $\sigma(T)$ is connected and contains zero for all $T \in \mathfrak{C}$.  Thus $\mathfrak{C}$ is projectionless.
\end{cor}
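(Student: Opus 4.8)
The plan is to deduce everything from Theorem \ref{readnil} together with Lemma \ref{cac0}. First I would note that, by Theorem \ref{readnil}, the dense subalgebra $\mathfrak{N}$ of $\mathfrak{C}$ consists entirely of nilpotent operators. Since every nilpotent operator is quasinilpotent, we have $\mathfrak{N} \subseteq QuasiNil(\mathfrak{C}) \subseteq \overline{QuasiNil(\mathfrak{C})}$. As $\mathfrak{N}$ is dense in $\mathfrak{C}$ and $\overline{QuasiNil(\mathfrak{C})}$ is closed, it follows immediately that $\mathfrak{C} = \overline{\mathfrak{N}} \subseteq \overline{QuasiNil(\mathfrak{C})}$; that is, every $T \in \mathfrak{C}$ is a norm limit of quasinilpotent operators.

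Next I would apply Lemma \ref{cac0}: any operator lying in $\overline{QuasiNil(\mathfrak{C})}$ has spectrum that is connected and contains zero. Combined with the previous paragraph this yields the first assertion of the statement for every $T \in \mathfrak{C}$.

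For the projectionless claim, I would take an arbitrary projection $P \in \mathfrak{C}$ and argue by spectral theory. Since $P = P^* = P^2$, we have $\sigma(P) \subseteq \{0,1\}$. By the first part of the statement $\sigma(P)$ is connected, so $\sigma(P)$ is one of $\{0\}$ or $\{1\}$; and since $0 \in \sigma(P)$ we must have $\sigma(P) = \{0\}$. As $P$ is normal with $\sigma(P) = \{0\}$, the continuous functional calculus gives $P = 0$. Hence $\mathfrak{C}$ has no non-zero projections.

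There is no real obstacle here — the only point requiring a moment's care is the passage from ``$\mathfrak{N}$ is dense and $\mathfrak{N} \subseteq QuasiNil(\mathfrak{C})$'' to ``$\mathfrak{C} \subseteq \overline{QuasiNil(\mathfrak{C})}$,'' which is immediate from the definition of norm closure, and the (standard) fact that a normal element with trivial spectrum is zero.
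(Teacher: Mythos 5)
Your proposal is correct and follows exactly the route the paper intends: the density of the nilpotent subalgebra from Theorem \ref{readnil} places every element of $\mathfrak{C}$ in $\overline{QuasiNil(\mathfrak{C})}$, Lemma \ref{cac0} gives the spectral conclusion, and the projectionless claim follows by the standard spectral argument you give. The paper simply labels this ``trivial''; you have supplied the same argument with the details written out.
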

\begin{proof}
The result is trivial by Theorem \ref{readnil} and Lemma \ref{cac0}.
\end{proof}
To conclude this section we will demonstrate that there exists an AF C$^*$-algebra that contains a C$^*$-subalgebra $\mathfrak{D}$ such that $\mathfrak{D} = \overline{Nil(\mathfrak{D})}$.  This demonstrates that the study of the closure of the nilpotent operators in an AF C$^*$-algebra is incredibly complex.  To begin we note the following trivial observation from the proof of Theorem \ref{readnil}.
\begin{lem}
\label{nilalgproduct}
Let $\mathfrak{C}$ be the C$^*$-algebra from Construction \ref{readcon}, let $\mathfrak{N}$ be the subalgebra of $\mathfrak{C}$ from Theorem \ref{readnil}, and $N_1, \ldots, N_m \in \mathfrak{N}$.  Then there exists an $\ell \in \mathbb{N}$ (depending on $N_1, \ldots, N_m$) such that $N_{n_1} N_{n_2} \cdots N_{n_\ell} = 0$ for any selection of $n_j \in \{1,\ldots, m\}$.
\end{lem}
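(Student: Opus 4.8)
The plan is to reuse the nilpotency bookkeeping from the last paragraph of the proof of Theorem \ref{readnil}, the only additional point being that one keeps a single cutoff index that works simultaneously for $N_1, \dots, N_m$.

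First I would recall the normal form established there: each $N_t$, $1 \le t \le m$, being a finite linear combination of products of generators from $\mathcal{E}_N$, can be written as
\[
N_t = \sum_{k=1}^{\ell_t} S_k^{(t)} \otimes \Bigl(\bigotimes_{i > c_t} M_i^{\,p_{t,k}}\Bigr)
\]
for some $c_t, \ell_t \in \mathbb{N}$, exponents $p_{t,k} \ge 1$ (each $p_{t,k}$ being the number of generators occurring in the $k$-th monomial of $N_t$, hence at least one), and operators $S_k^{(t)} \in \mathcal{B}\bigl(\bigotimes_{j=1}^{c_t}\mathcal{H}_j\bigr)$. Next I would pass to a common cutoff $c := \max_{1\le t\le m} c_t$: for $c_t < i \le c$ the factor $M_i^{\,p_{t,k}}$ is a fixed element of $\mathcal{B}(\mathcal{H}_i)$, so it can be absorbed into the head factor, giving $N_t = \sum_{k=1}^{\ell_t} T_k^{(t)} \otimes \bigl(\bigotimes_{i>c} M_i^{\,p_{t,k}}\bigr)$ with the same exponents $p_{t,k}\ge 1$ and with $T_k^{(t)} \in \mathcal{B}\bigl(\bigotimes_{j=1}^{c}\mathcal{H}_j\bigr)$.

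Then I would take $\ell := m_{c+1}$, the nilpotency index of $M_{c+1}$, and expand an arbitrary product $N_{n_1} N_{n_2} \cdots N_{n_\ell}$ with $n_1, \dots, n_\ell \in \{1,\dots,m\}$ into monomials. Since $\bigl(\bigotimes_n R_n\bigr)\bigl(\bigotimes_n R'_n\bigr) = \bigotimes_n R_n R'_n$, each such monomial equals
\[
\Bigl(\prod_{j=1}^{\ell} T^{(n_j)}_{k_j}\Bigr) \otimes \Bigl(\bigotimes_{i>c} M_i^{\,p_{n_1,k_1} + \cdots + p_{n_\ell,k_\ell}}\Bigr),
\]
and since each $p_{n_j,k_j}\ge 1$ the exponent is at least $\ell = m_{c+1}$; in particular the $(c+1)$-st tensor leg is a power of $M_{c+1}$ of exponent at least $m_{c+1}$, hence zero. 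Thus every monomial vanishes, so $N_{n_1}\cdots N_{n_\ell} = 0$, and since $\ell$ was chosen only after $N_1,\dots,N_m$ were fixed it depends on those operators alone.

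I do not expect a genuine obstacle, since the statement is a routine strengthening of the fact that every element of $\mathfrak{N}$ is nilpotent. The one step deserving a line of care is the reduction to a single cutoff $c$ --- a priori each $N_t$, and indeed each monomial within it, is only ``eventually a power of $M_i$'' past its own threshold --- but absorbing the finitely many intermediate tensor legs into the head operator disposes of this at once.
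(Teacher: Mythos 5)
Your proof is correct and follows essentially the same route as the paper, which simply declares the lemma trivial from the normal form $\sum_k S_k \otimes \bigl(\otimes_{i>n} M_i^{p_k}\bigr)$ established in part (3) of the proof of Theorem \ref{readnil}. Your only additions --- passing to a common cutoff $c$ and counting that every monomial in the expanded product carries tail exponent at least $\ell = m_{c+1}$ on the $(c+1)$-st leg --- are exactly the details the paper leaves implicit, and they are handled correctly.
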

\begin{proof}
This result is trivial by the structure of elements of $\mathfrak{N}$ from the third part of the proof of Theorem \ref{readnil}. 
\end{proof}
\begin{lem}
\label{readsubalg}
Let $\mathfrak{C}$ be the C$^*$-algebra from Construction \ref{readcon} and let $\mathfrak{N}$ be the subalgebra of $\mathfrak{C}$ from Theorem \ref{readnil}.  The subalgebra 
\[
C_0(0,1] \odot \mathfrak{N} := \left\{ \sum^m_{j=1} f_j \otimes N_j \, \mid \, m \in \mathbb{N}, N_j \in \mathfrak{N}, f_j \in C_0(0,1]\right\}
\]
of $C_0(0,1] \otimes_{\min} \mathfrak{C}$ is dense and consists entirely of nilpotent operators.
\end{lem}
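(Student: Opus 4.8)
The plan is to verify the two assertions separately: that $C_0(0,1] \odot \mathfrak{N}$ is norm-dense in $C_0(0,1] \otimes_{\min} \mathfrak{C}$, and that each of its elements is nilpotent. (That $C_0(0,1] \odot \mathfrak{N}$ is a subalgebra is immediate, since $C_0(0,1]$ is a commutative algebra, $\mathfrak{N}$ is an algebra by Theorem \ref{readnil}, and $(f \otimes a)(g \otimes b) = fg \otimes ab$.) Both assertions will follow quickly from results already established, namely that $\mathfrak{N}$ is dense in $\mathfrak{C}$ and consists of nilpotents (Theorem \ref{readnil}) together with the uniform nilpotency bound of Lemma \ref{nilalgproduct}.

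For density, I would start from the fact that the algebraic tensor product $C_0(0,1] \odot \mathfrak{C}$ is by construction norm-dense in $C_0(0,1] \otimes_{\min} \mathfrak{C}$, so it suffices to approximate a general element $\sum_{j=1}^m f_j \otimes c_j$ (with $f_j \in C_0(0,1]$ and $c_j \in \mathfrak{C}$) by elements of $C_0(0,1] \odot \mathfrak{N}$. Given $\epsilon > 0$, use Theorem \ref{readnil} to pick $N_j \in \mathfrak{N} \subseteq \mathfrak{C}$ with $\|c_j - N_j\| < \epsilon / (m(1 + \|f_j\|_\infty))$; then, since $\|f \otimes a\|_{\min} = \|f\|_\infty \|a\|$, the triangle inequality gives $\|\sum_j f_j \otimes c_j - \sum_j f_j \otimes N_j\|_{\min} \le \sum_j \|f_j\|_\infty \|c_j - N_j\| < \epsilon$, which yields the density.

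For nilpotency, fix $T = \sum_{j=1}^m f_j \otimes N_j \in C_0(0,1] \odot \mathfrak{N}$. Apply Lemma \ref{nilalgproduct} to the finite family $N_1, \dots, N_m$ to obtain $\ell \in \mathbb{N}$ with $N_{n_1} N_{n_2} \cdots N_{n_\ell} = 0$ for every choice of $n_1, \dots, n_\ell \in \{1, \dots, m\}$. Using the product rule in the algebraic tensor product and the commutativity of $C_0(0,1]$, expand
\[
T^\ell = \sum_{n_1,\dots,n_\ell = 1}^{m} (f_{n_1} f_{n_2} \cdots f_{n_\ell}) \otimes (N_{n_1} N_{n_2} \cdots N_{n_\ell}),
\]
and observe that every term on the right vanishes, so $T^\ell = 0$ and $T$ is nilpotent.

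Neither step presents a genuine difficulty; the only point requiring attention is that the nilpotency index $\ell$ supplied by Lemma \ref{nilalgproduct} depends only on the finite set $\{N_1, \dots, N_m\}$ actually occurring in $T$, which is precisely what makes the expansion of $T^\ell$ collapse. There is no uniform nilpotency bound over all of $C_0(0,1] \odot \mathfrak{N}$, but none is needed — the algebra consists entirely of nilpotent operators without being itself nilpotent, exactly as in Theorem \ref{readnil}.
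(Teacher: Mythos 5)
Your proof is correct and follows essentially the same route as the paper: density is deduced from the density of $\mathfrak{N}$ in $\mathfrak{C}$ via the cross-norm property of $\otimes_{\min}$, and nilpotency comes from applying Lemma \ref{nilalgproduct} to the finite family $N_1,\ldots,N_m$ and expanding $T^\ell$. The paper states both steps more tersely, but the substance is identical.
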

\begin{proof}
Clearly $C_0(0,1] \odot \mathfrak{N}$ is a dense subalgebra of $C_0(0,1] \otimes_{\min} \mathfrak{C}$ as $\mathfrak{N}$ is a dense subalgebra of $\mathfrak{C}$.  Let $\sum^m_{j=1} f_j \otimes  N_j \in C_0(0,1] \odot \mathfrak{N}$ be arbitrary.  By Lemma \ref{nilalgproduct} there exists an $\ell \in \mathbb{N}$ such that $N_{n_1} N_{n_2} \cdots N_{n_\ell} = 0$ for any $n_j \in \{1,\ldots, m\}$.  Thus $\left(\sum^m_{j=1} f_j \otimes N_j\right)^\ell = 0$ so every element of $C_0(0,1] \odot \mathfrak{N}$ is nilpotent. 
\end{proof}
\begin{prop}
\label{AFwithdensenil}
There exists an AF C$^*$-algebra $\mathfrak{A}$ and a C$^*$-subalgebra $\mathfrak{D}$ of $\mathfrak{A}$ such that $\mathfrak{D}$ has a dense subalgebra consisting entirely of nilpotent operators.
\end{prop}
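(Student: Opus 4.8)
The plan is to take $\mathfrak{D}$ to be the cone $C_0(0,1] \otimes_{\min} \mathfrak{C}$ over the C$^*$-algebra $\mathfrak{C}$ of Construction \ref{readcon}, and to embed this cone into an AF C$^*$-algebra exactly as in the proof of Theorem \ref{AFpos}. The nilpotent structure of the required dense subalgebra is then supplied by Lemma \ref{readsubalg}, so almost all the work has already been done.

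First I would recall from Lemma \ref{propertiesofread} that $\mathfrak{C}$ is separable and nuclear, hence in particular separable and exact. Consequently, by Ozawa's theorem on AF-embeddability of cones (Proposition 2 in \cite{Oz}, see also Theorem 8.3.5 in \cite{BO}), the cone $\mathfrak{D} := C_0((0,1], \mathfrak{C}) = C_0(0,1] \otimes_{\min} \mathfrak{C}$ is AF-embeddable. Thus there exists an AF C$^*$-algebra $\mathfrak{A}$ containing $\mathfrak{D}$ as a C$^*$-subalgebra.

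Second, Lemma \ref{readsubalg} tells us that the subalgebra $C_0(0,1] \odot \mathfrak{N}$, where $\mathfrak{N}$ is the dense nilpotent subalgebra of $\mathfrak{C}$ produced in Theorem \ref{readnil}, is dense in $C_0(0,1] \otimes_{\min} \mathfrak{C} = \mathfrak{D}$ and consists entirely of nilpotent operators. This is precisely the desired dense subalgebra of $\mathfrak{D}$, so the proof is complete. One may also observe that this forces $\mathfrak{D} = \overline{Nil(\mathfrak{D})}$, sharpening the statement.

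I do not expect a genuine obstacle here: the two substantive ingredients --- the construction of a nilpotent dense subalgebra in the cone (Lemma \ref{readsubalg}) and the AF-embeddability of cones over separable exact C$^*$-algebras --- are already available. The only point that requires a moment's attention is checking the hypotheses of Ozawa's theorem, namely that $\mathfrak{C}$ is separable and exact; both are immediate from Lemma \ref{propertiesofread}, since nuclear C$^*$-algebras are exact.
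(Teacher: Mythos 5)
Your proposal is correct and follows exactly the paper's argument: take $\mathfrak{D} = C_0(0,1] \otimes_{\min} \mathfrak{C}$, invoke Lemma \ref{propertiesofread} together with Ozawa's theorem on AF-embeddability of cones to embed $\mathfrak{D}$ into an AF C$^*$-algebra, and conclude with Lemma \ref{readsubalg}. No issues.
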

\begin{proof}
Let $\mathfrak{C}$ be the C$^*$-algebra from Construction \ref{readcon} and let 
\[
\mathfrak{D} := C_0(0,1] \otimes_{\min} \mathfrak{C}.
\]
Then $\mathfrak{D}$ is AF-embeddable by Lemma \ref{propertiesofread} and by Proposition 2 in \cite{Oz}.  Thus the result follows from Lemma \ref{readsubalg}.
\end{proof}

\section*{Acknowledgements} 
This research was supported in part by NSERC PGS.  The author would like to thank Laurent Marcoux for informing me of this problem, his proof of Theorem \ref{UHFnil}, his idea that tracial states provide restrictions to positive limits of nilpotent operators, for the multiple discussions with him pertaining to this problem, and for his comments and advice on this paper.

\end{document}